\documentclass[10pt,a4paper,twoside]{amsart}
\usepackage[T1]{fontenc}
\usepackage[utf8]{inputenc}
\usepackage[english]{babel}
\usepackage{amsmath}
\usepackage{amsthm}
\usepackage{amssymb}
\usepackage{amsfonts}
\usepackage{amsxtra}
\usepackage{enumerate}
\usepackage{mathrsfs}
\usepackage{ulem}
\usepackage{verbatim}
\usepackage{color}
\pagestyle{headings}
\usepackage[pagewise,displaymath,mathlines]{lineno}

\newcommand{\N}{\mathbb N}

\newcommand{\Q}{\mathbb Q}
\newcommand{\R}{\mathbb R}

\newtheorem*{claim}{Claim}

\newtheorem*{examp}{Example}
\newtheorem{theorem}{Theorem}[section]
\newtheorem{lemma}[theorem]{Lemma}
\newtheorem{cor}[theorem]{Corollary}

\newtheorem{prop}[theorem]{Proposition}

\theoremstyle{remark}
\newtheorem{remark}[theorem]{Remark}

\theoremstyle{definition}
\newtheorem{definition}[theorem]{Definition}

\DeclareMathOperator*{\supp}{supp}
\DeclareMathOperator*{\conv}{conv}

\DeclareMathOperator*{\dist}{dist}

\DeclareMathOperator*{\inte}{int}
\DeclareMathOperator*{\inj}{inj}

\DeclareMathOperator*{\id}{id}
\DeclareMathOperator*{\ev}{ev}
\DeclareMathOperator*{\tev}{[\partial_t ev]}
\DeclareMathOperator*{\EV}{ev}

\let\phi=\varphi
\let\e=\varepsilon

\begin{document}

\setcounter{page}{1}

\title[Optimal transportion for Lorentzian cost functions]{Theory of optimal transport for Lorentzian cost functions}

\author[Stefan Suhr]{Stefan Suhr}
\address{Fakult\"at f\"ur Mathematik, Ruhr-Universit\"at Bochum, Universit\"atsstra\ss e 150, 44780 Bochum, Germany}
\email{stefan.suhr@rub.de}

\date{\today}

\begin{abstract}
The optimal transport problem in the context of Lorentz-Finsler geometry is studied. Besides 
deducing the existence of optimal couplings a result on the intermediate regularity of optimal couplings 
is given. One further establishes a solution to the Monge problem and an exact criterion for the 
existence of causal couplings. The results generalize parts of \cite{berpue}, \cite{br2} and \cite{eckmil}. 
\end{abstract}

\maketitle

\section{Introduction}
This article studies optimal transportation in Lorentz-Finsler manifolds from the geometric point of view. The geometric viewpoint necessitates that one passes
to a spacetime as configuration space. In a spacetime the time parameter is part of the geometry. There are multiple choices for the time parameter and, as is well 
known in Lorentzian geometry, no choice is preferred. In other words it is not canonical which part of the spacetime is space, or equivalently which points are 
isochronous. Thus isochronicity in Lorentz-Finsler geometry is subject to a choice. 
Usually this choice is made via singling out a time function whose level sets are then thought of as constituting space. After choosing a time function a transport 
problem can be posed between two level sets of this time function. Solutions to such transport problems are provided for example in \cite{berpue}. Transport 
problems originating in applications to relativity though, such as the early universe reconstruction problem (see below), cannot be brought into such a form
thus motivating the spacetime perspective. This follows from the fact that the support of one martingale is the boundary of the causal past of a set. In this case 
the support is in general not acausal (see Section \ref{results}). Sets that are not acausal are never the level set of a time function. 
In the formulation of other transport problems the measures are not concentrated on a single level set of a time function, but are rather
distributed on a continuum of level sets, i.e. distributed over a continuum of time parameters. This appears for example when considering the same transport 
problem relative to two different time functions. For a discussion of the dependence on different time functions see  \cite{miller17}.

The first one to take notice of the problem of optimal transportation in Lorentzian geometry was 
\cite{brenier1}. Therein a transportation problem is proposed, which only weakly disguised is the 
problem of transportation between parallel spacelike hyperplanes in Minkowski space with respect to the 
negative Lorentzian distance extended by $\infty$. Here a strong form of isochronicity
is assumed for the support of each measure, i.e. being supported on level sets of a linear 
time function. Following this formulation \cite{berpue} generalized the problem to a wider class of 
functions called {\it relativistic costs}, and gave inter alia a solution to the Monge problem while 
staying in the same basic geometric frame.

The {\it early universe reconstruction problem}, studied in \cite{br2} and \cite{fr} with methods of optimal transportation, asks whether one can construct 
the trajectories of masses from the big bang to their present day positions in Robertson-Walker spacetimes. A mathematical formulation for general 
globally hyperbolic spacetimes would read as follows: Given two measures, one concentrated on a Cauchy hypersurface, the other on the past cone 
of a point. Then what can be said about the trajectories of the minimizers in a dynamical optimal coupling (see Definition \ref{defdynoptcou}) of the two measures? 
\cite{fr} gives a justification to why the problem can be studied with methods from optimal transportation.

The first question that comes to mind when studying a cost function which take an infinite value, such as the cost function considered here, is whether 
there exists a coupling of two given measures with finite cost. This problem was studied in \cite{berpue}. 
Recently in a more systematic approach \cite{eckmil}, \cite{mil} and \cite{eckmil2} have studied the 
the problem and the causal evolution of measures in Lorentzian geometry. Theorem \ref{P2} extends the existing results on the question  
to a more abstract setting including metric spaces. 

The other results in this article generalize the previous approaches to the problem of Lorentzian optimal 
transportation in two directions: The first goal as already mentioned 
above is the structure of the support of the measures involved, i.e. passing from being supported on 
surfaces of isochronicity (level sets of time functions) to being distributed in space and time or on achronal sets, see
Theorem \ref{intermediateregularity}, \ref{lipreg}, \ref{Thmmonge}, and \ref{Thmmonge2}. From the physics point of view 
this means that observations are not only made at a single point in time but rather over a stretch
of time or cannot be brought into the form of a single time parameter. 

Second there is the extension to Lorentz-Finsler geometry. This category includes Lorentzian geometry. 
Thus one can now study transport problems in relativity in their full generality. The step from 
relativistic cost functions and Robertson-Walker spacetimes, respectively, to globally hyperbolic Lorentz-Finsler 
spacetimes is comparable with passing from Euclidian space to Riemannian manifolds in the theory of 
optimal transport. 

The study was motivated by these comments and the prospect of future developments mimicking the 
relations between optimal transport and fluid dynamics, Riemannian geometry and formulations of 
synthetic curvature. The article gives besides these generalizations new results on the structure
of optimal couplings not known even for relativistic cost functions. Section \ref{results} describes 
the setting and states the main results. Section \ref{proofs} provides the proofs.

{\it Acknowledgement:}
The author would like to thank Victor Bangert for suggesting the problem of optimal transportation in the context of Lorentzian geometry and Albert 
Fathi for encouraging the pursuit of the project. The author would further like to thank Patrick Bernard for providing the opportunity to carry out the ideas 
for this article and making numerous suggestions which helped to shape the exposition of the present results. Valentine Roos and Rodolfo R\'ios-Zertuche 
are kindly thanked for many helpful discussions in the process of this research.

\section{The results}\label{results}

Let $M$ be a smooth manifold of dimension $m\ge 2$. Denote by $\pi_{TM}\colon TM\to M$ the canonical 
projection of tangent vectors to their base point. Throughout the article one fixes a 
complete Riemannian metric $h$ on $M$. The norm $|.|$ and distances $\dist(.,.)$ are understood to be
induced by $h$, unless noted otherwise. Recall that $h$ induces a Riemannian metric on $TM$. Distances in $TM$ are understood 
to be induced by this metric. The metric ball around $y$ with radius $r$ is denoted by $B_r(y)$.
Set $T^0M$ to be the image of the zero section of $TM$ and $0_p$ the zero vector in $TM_p$. 

Consider a continuous function $\mathbb{L}\colon TM\to \R$ smooth on $TM\setminus T^0M$ and positive homogenous of degree $2$ such that the second 
fiber derivative is nondegenerate with index $m-1$. One says that  $\mathcal{C}\subset TM$ is a {\it closed cone field} if $\mathcal{C}_p:=
\mathcal{C}\cap TM_p$ is a closed convex cone for all $p\in M$ and $\mathcal{C}\cup T^0M$ is a closed subset of $TM$.
A {\it causal structure} $\mathcal{C}$ of $(M,\mathbb{L})$ is then a choice of a closed cone field $\mathcal{C}$ with $\pi_{TM}(\mathcal{C}) =M$ such that 
$\inte\mathcal{C}$, the open interior of $\mathcal{C}$, is a connected component of $\{\mathbb{L}>0\}$. 
For every point $p\in M$ every connected component of $TM_p\cap \{\mathbb{L}> 0\}\subset 
TM_p$ belongs to a unique causal structure up to a finite cover, see 
Section \ref{s3.1}.

Fix a causal structure $\mathcal{C}$ for $(M,\mathbb{L})$. Define a new Lagrangian $L$ on $TM$ by setting 
$$L(v):=\begin{cases}
-\sqrt{\mathbb{L}(v)},& \text{ for }v\in \mathcal{C},\\
\infty,&\text{ otherwise.}
\end{cases}$$
The function $L$ is fiberwise convex, finite on its domain and positive homogeneous of degree one. It further is smooth 
on $\inte\mathcal{C}$. The function $L$ has the features of a Finsler metric of Lorentzian type. This justifies to call 
the pair $(M,L)$ a {\it Lorentz-Finsler manifold}.  The generality of Lorentz-Finsler geometry is chosen in view of 
recent developments in the area, see e.g. \cite{jasa}, \cite{minguzzi141}, \cite{minguzzi142}, \cite{minguzzi143}, and the goal 
to achieve a scope comparable to the one of Tonelli-Lagrangian systems, see e.g. \cite{bebu1}, \cite{bebu2} and \cite{fafi}. 

One calls an absolutely continuous curve $\gamma\colon I\to M$ {\it ($\mathcal{C}$-)causal} if $\dot\gamma\in \mathcal{C}$ 
whenever the tangent vector exists. A causal curve $\gamma\colon I\to M$ is {\it timelike} if for all $s\in I$ there exists $\e,\delta>0$ 
such that $\dist(\dot\gamma(t),\partial\mathcal{C})\ge \e |\dot\gamma(t)|$,  for every $t\in I$ for which $\dot\gamma(t)$ exists and 
$|s-t|<\delta$.  

Denote by $J^+(p)$ the set of points $q\in M$ such that there exists a causal curve with initial point $p$ and 
terminal point $q$. $J^-(p)$ is the set of points $q\in M$ such that there exists a causal curve with initial point $q$ and terminal point $p$.
$I^\pm(p)$ are defined in a similar way where causal curves are replaced by timelike ones.
For $A\subset M$ set $J^\pm(A) :=\cup_{p\in A} J^\pm (p)$. Define the set
$$J^+:=\{(p,q)\in M\times M|\; q\in J^+(p)\}.$$ 
For an open set $U\subseteq M$ define $J^\pm_U$ and $I^\pm_U$ as before for the restriction $(U,\mathcal{C}|_U)$, with 
$\mathcal{C}|_U:=TU\cap \mathcal{C}$.

A Lorentz-Finsler manifold is said to be {\it causal} if it does not admit a closed causal curve. 

\begin{definition}\label{D1}
A causal Lorentz-Finsler manifold $(M,L)$ is {\it globally hyperbolic} if the sets $J^+(p)\cap J^-(q)$ are compact for all $p,q\in M$. 
\end{definition}

Every causal structure is a closed and nondegenerate cone field which is wider than an open 
nondegenerate cone field in the sense of \cite{suhr1}. Therefore Definition \ref{D1} implies that the causal structure of a 
globally hyperbolic Lorentz-Finsler manifold is globally hyperbolic in the sense of \cite{suhr1}. 
By \cite[Theorem 3]{suhr1} there exists a smooth function $\tau\colon M\to \R$ 
(called a {\it splitting}) with 
$$-d\tau(v) \le \min \{L(v),-|v|\}$$
for all $v\in \mathcal{C}$. 
\cite[Corollary 1.8]{suhr1} implies that there exists a diffeomorphism (also called a 
{\it splitting}) $M\cong \R\times N$ such that 
$$\tau\colon M\cong\R\times N\to \R,\; p\cong (\theta,x)\mapsto \theta$$ 
if $(M,L)$ is globally hyperbolic. Note that $\tau$ is by far not unique.

\begin{remark}\label{remarkinextendable}
For a causal curve $\gamma\colon (a,b)\to M$ which leaves every compact subset of $M$ for both $t\downarrow a$ and $t\uparrow b$, one has $\lim_{t\downarrow a} 
\tau\circ \gamma(t)=-\infty$ and $\lim_{t\uparrow b} \tau\circ \gamma(t)=\infty$. This follows from the completeness of $h$.
\end{remark}

Define the Lagrangian action relative to $L$:
$$\mathcal{A}(\gamma):=\begin{cases}
\int L(\dot\gamma)dt,&\text{ if $\gamma$ is $\mathcal{C}$-causal,}\\
\infty,&\text{ else.}
\end{cases}$$
A causal curve $\gamma\colon I\to M$ is an {\it $\mathcal{A}$-minimizer} between its endpoints $p,q\in M$ if 
$$\mathcal{A}(\gamma)=\inf\{\mathcal{A}(\eta)|\; \eta\text{ connects $p$ and $q$}\}.$$

\begin{prop}\label{P0}
Let $(M,L)$ be globally hyperbolic. Then for every pair of points $p,q\in M$ with $(p,q)\in J^+$ there exists an $\mathcal{A}$-minimizer $\gamma\colon I\to M$ 
with finite action connecting the two points. The minimizer $\gamma$ solves the Euler-Lagrange equation of $\mathbb{L}$ up to monotone 
reparameterization and one has $\dot\gamma\in\mathcal{C}$ whenever the tangent vector exists.
\end{prop}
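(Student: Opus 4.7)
The plan is to follow the classical Avez--Seifert strategy adapted to the Lorentz--Finsler setting: extract a minimizer of $\mathcal{A}$ (equivalently, a maximizer of the Lorentzian length $-\mathcal{A}$) as a $C^0$-limit of a minimizing sequence, then upgrade the limit to a smooth solution of the Euler--Lagrange equation of $\mathbb{L}$.

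First I would set up compactness. Parameterize every candidate $\mathcal{C}$-causal curve $\gamma\colon[0,1]\to M$ from $p$ to $q$ proportionally to $h$-arclength. Using a Cauchy time function $\tau$ coming from the splitting $M\cong\R\times N$, one has $d\tau(v)\ge\sqrt{\mathbb{L}(v)}$ on $\mathcal{C}$, which by continuity and a compactness argument on the compact set $K:=J^+(p)\cap J^-(q)$ upgrades to a uniform estimate $d\tau(v)\ge c|v|_h$ for some $c>0$ and all $v\in\mathcal{C}|_K$. Monotonicity of $\tau\circ\gamma$ then bounds the $h$-length of every candidate uniformly by $c^{-1}(\tau(q)-\tau(p))$.

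Second, I would take a minimizing sequence $(\gamma_n)$ for $\mathcal{A}$, reparameterized proportionally to $h$-arclength so that the curves are equi-Lipschitz with values in $K$. By Arzel\`a--Ascoli extract a uniformly convergent subsequence $\gamma_n\to\gamma_\infty$, together with a weak${}^*$-limit of $(\dot\gamma_n)$ in $L^\infty$. Closedness and fiberwise convexity of $\mathcal{C}$ force $\dot\gamma_\infty\in\mathcal{C}$ almost everywhere, so $\gamma_\infty$ is $\mathcal{C}$-causal from $p$ to $q$. Fiberwise concavity of $v\mapsto\sqrt{\mathbb{L}(v)}$ on $\mathcal{C}$, a consequence of the index hypothesis on the fiber Hessian of $\mathbb{L}$, then delivers upper semicontinuity of the Lorentzian length under this convergence; hence $\gamma_\infty$ achieves $\inf\mathcal{A}<\infty$.

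Finally I would upgrade regularity. A first-variation argument precludes a minimizer from containing both timelike and null portions: perturbing a junction into $\inte\mathcal{C}$ strictly decreases $\mathcal{A}$. On timelike portions $L$ is smooth near $\dot\gamma_\infty$ and a monotone reparameterization making $\mathbb{L}(\dot\gamma)$ constant produces the classical Euler--Lagrange solution of $\mathbb{L}$; null portions are treated using the local results of \cite{minguzzi142} for null geodesics of $\mathcal{C}$. In either case $\dot\gamma\in\mathcal{C}$ everywhere follows from smoothness of the resulting curve together with closedness of $\mathcal{C}$. The main obstacle is precisely this regularity step: $L$ fails to be smooth across $\partial\mathcal{C}\setminus\overline{0}$ and its degree-one homogeneity makes the associated Euler--Lagrange system degenerate in the radial direction. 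The remedy is to work with the degree-two Lagrangian $\mathbb{L}$ after fixing a parameter and to exploit the timelike/null dichotomy, both of which carry over verbatim from the smooth Lorentzian case thanks to the nondegeneracy and index condition imposed on $\mathbb{L}$.
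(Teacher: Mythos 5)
Your proof follows the Avez--Seifert extraction argument, which is exactly the ``same fashion as in the Lorentzian case'' that the paper invokes in a single sentence with a reference to \cite{minguzzi142} for the local minimization; the skeleton you give --- uniform $h$-length bound via the temporal function, Arzel\`a--Ascoli plus weak$^*$ compactness of the derivatives, lower semicontinuity of $\mathcal{A}$ from convexity of $L$ on $\mathcal{C}$, then the timelike/null dichotomy together with the local convexity results to upgrade the limit curve to a reparameterized $\mathbb{L}$-geodesic --- is the intended route and is sound.

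One step is phrased incorrectly, however: the estimate $d\tau(v)\ge c|v|_h$ for $v\in\mathcal{C}|_K$ does \emph{not} follow from $d\tau\ge\sqrt{\mathbb{L}}$ together with compactness of $K$, because $\sqrt{\mathbb{L}}$ vanishes identically on $\partial\mathcal{C}$, so the stated inequality alone is compatible with $d\tau(v)=0$ for boundary directions $v$. What you actually need --- and what the splitting of \cite{suhr1} provides, as the paper itself uses implicitly, e.g.\ in the proof of Lemma \ref{strictconvexity}(i) where $d\tau_{(t,x)}\in\inte\mathcal{C}^*_{(t,x)}$ is invoked --- is that $d\tau$ lies in the \emph{interior} of the dual cone $\mathcal{C}^*$ at every point, i.e.\ $d\tau>0$ on $\mathcal{C}\setminus\overline{0}$. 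Once you have that, compactness of $\mathcal{C}|_K\cap T^1M$ (with respect to $h$) gives the uniform lower bound $d\tau\ge c>0$ on $h$-unit causal vectors over $K$, and the $h$-length estimate you want follows. With this repair your argument is complete.
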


Since the arguments are completely analogous to the Lorentzian case only a brief summary of the proof is given for completeness. 
\begin{proof}[Sketch of proof]
For $(p,q)\in J^+$ consider the space $\mathcal{C}(p,q)$ of causal curves $\eta$
from $p$ to $q$ with $\tau\circ\eta \equiv \id$ and equipped with the uniform
$C^0$-topology. $\mathcal{C}(p,q)$ is nonempty and compact  since $(M,\mathcal{C})$ is globally hyperbolic by \cite[Proposition 5.15]{suhr1}. 

The restriction $\mathcal{A}\colon \mathcal{C}(p,q)\to \R$ is lower semicontinuous. Therefore there exists an $\mathcal{A}$-minimizer 
$\gamma\colon [\tau(p),\tau(q)]\to M$ in $\mathcal{C}(p,q)$ with $\dot\gamma\in \mathcal{C}$ whenever the tangent exists. 
Now by \cite[Theorem 6]{minguzzi142} $\gamma$ is a pregeodesic, i.e. solves the Euler-Lagrange equations of the action functional associated 
to $\mathbb{L}$ up to a monotone reparameterization.
\end{proof}

For a globally hyperbolic Lorentz-Finsler manifold define the {\it cost function relative to} $L$:
\begin{align*}
c_L\colon M\times M&\to \R\cup\{\infty\}\\
(p,q)&\mapsto\inf\left\{\left.\mathcal{A}(\gamma)\right|\; \gamma\text{ connects $p$ and $q$}\right\}
\end{align*}
$c_L$ satisfies the triangle inequality 
$$c_L(p,r)\le c_L(p,q)+c_L(q,r)$$
for all $p,q,r\in M$. The inequality is nontrivial only if $(p,q),(q,r)\in J^+$. In this case the inequality follows from the observations that the causal relation $J^+$ is transitive 
and $c_L$ is defined via an infimum. For an $\mathcal{A}$-minimizer $\gamma\colon [a,c]\to M$ and $a\le b\le c$ one has 
$$c_L(\gamma(a),\gamma(c))=c_L(\gamma(a),\gamma(b))+c_L(\gamma(b),\gamma(c)).$$

For two Borel probability measures $\mu,\nu$ on $M$ call a Borel probability measure $\pi$ on $M\times M$ a {\it coupling of $\mu$ and $\nu$} if 
$(p_1)_\sharp \pi=\mu$ and $(p_2)_\sharp \pi=\nu$ where $p_{1},p_2\colon M\times M\to M$ are the  projections onto the first and second factor. Recall that
the push-forward $(p_i)_\sharp \pi$ is defined as $(p_i)_\sharp \pi (A):=\pi (p_i^{-1}(A))$. The set of couplings of $\mu$ and $\nu$ is denoted by $\Pi(\mu,\nu)$.

The {\it cost of a coupling $\pi$} is 
$$\int_{M\times M} c_L(p,q)\; d\pi(p,q).$$
Denote by $C_{L}(\mu,\nu)$ the {\it minimal cost relative to} $c_L$ of couplings between $\mu$ and $\nu$, i.e.
$$C_{L}(\mu,\nu):=\inf\left\{\left.\int c_{L}d\pi\right|\pi\in\Pi(\mu,\nu)\right\}\in \R\cup\{\infty\}.$$
A coupling $\pi$ of two probability measures $\mu$ and $\nu$ is {\it optimal} if the cost of $\pi$ is minimal, i.e.
$$\int c_L d\pi= C_L(\mu,\nu).$$

Denote by $\mathcal{P}(M)$ the set of Borel probability measures on $M$ and set
$$\mathcal{P}_\tau(M):=\{\mu\in \mathcal{P}(M)|\; \tau\in L^1(\mu)\}$$
for a splitting $\tau\colon M\to\R$.

\begin{prop}\label{P1}
Let $\mu,\nu\in \mathcal{P}_\tau(M)$. Then there exists an optimal coupling $\pi$ of $\mu$ 
and $\nu$.
\end{prop}

\begin{proof}
The statement is a direct consequence of \cite[Theorem 4.1]{villani}. One thus has to check the assumption. Manifolds are Polish spaces and $c_L$ is lower 
semicontinuous. For the other assumptions one has to find two upper semicontinuous functions $a,b\colon M\to \R\cup\{-\infty\}$ 
with $a\in L^1(\mu)$, $b\in L^1(\nu)$ and $a(p)+b(q)\le c_L(p,q)$.

The inequality $-d\tau(v)\le L(v)$ for all $v\in TM$ 
implies that $\tau(p)-\tau(q)\le c_L(p,q)$. Thus setting $a:=\tau\in L^1(\mu)$ and $b:=-\tau\in L^1(\nu)$ yields the claim.
\end{proof}

The abstract existence result in Proposition \ref{P1} immediately raises the question: Under what assumptions does a coupling with finite cost exist? 
The simplest case is 
that of two Dirac measures $\mu=\delta_p$ and $\nu=\delta_q$. A coupling of $\delta_p$ and $\delta_q$ with finite cost exists if and only if $(p,q)\in J^+$. In turn 
$(p,q)\in J^+$ if and only if $\delta_q(J^+(A))\ge \delta_p(A)$ and $\delta_p(J^-(B))\ge \delta_q(B)$ for all measurable $A,B\subset M$. The necessity of 
the condition was noticed in \cite{berpue} for relativistic cost functions and general measures.

The problem can be formulated in a more abstract setting though. Let $(\mathcal{X},d_\mathcal{X})$ and $(\mathcal{Y},d_\mathcal{Y})$ be locally compact 
Polish spaces.
Denote by $\pi_\mathcal{X}\colon \mathcal{X}\times \mathcal{Y}\to \mathcal{X}$ and $\pi_\mathcal{Y}\colon \mathcal{X}\times \mathcal{Y}\to \mathcal{Y}$ 
the canonical projections. For $\mathscr{J}\subseteq \mathcal{X}\times \mathcal{Y}$, $A\subseteq \mathcal{X}$ and $B\subseteq \mathcal{Y}$ define
$$\mathscr{J}^+(A):= \pi_\mathcal{Y}(\pi_\mathcal{X}^{-1}(A)\cap \mathscr{J})\subset \mathcal{Y}\text{ and }\mathscr{J}^-(B):= \pi_\mathcal{X}(\pi_\mathcal{Y}^{-1}(B)\cap \mathscr{J})\subset \mathcal{X}.$$

\begin{definition}\label{defcaurel}
Two probability measures $\mu\in \mathcal{P}(\mathcal{X})$ and $\nu\in \mathcal{P}(\mathcal{Y})$ are {\it $\mathscr{J}$-related} if there exists a coupling $\pi$ 
of $\mu$ and $\nu$ with $\pi(\mathscr{J})=1$.
\end{definition}

For $\mathcal{X}=\mathcal{Y}=M$ and $\mathscr{J}=J^+$ a coupling $\pi$ with $\pi(J^+)=1$ is called a {\it causal coupling}. Further for
two probability measures $\mu,\nu\in \mathcal{P}_\tau (M)$ the $J^+$-relation is equivalent to the finiteness of the optimal 
cost, i.e. $|C_{L}(\mu,\nu)|<\infty$. Indeed if $|C_{L}(\mu,\nu)|<\infty$ there exists a coupling $\pi$ of $\mu$ and $\nu$ with $|\int c_Ld\pi| <\infty$, i.e.
$\pi(J^+)=1$. If on the other hand there exists a causal coupling of $\mu$ and $\nu$ then 
$$\left|\int c_Ld\pi'\right|\le \left|\int \tau d\mu\right|+ \left|\int \tau d\nu\right|<\infty$$
for every causal coupling $\pi'$. It follows that $|C_{L}(\mu,\nu)|<\infty$.

\begin{theorem}\label{P2}
Let $(\mathcal{X},d_\mathcal{X})$ and $(\mathcal{Y},d_\mathcal{Y})$ be Polish spaces and $\mathscr{J}\subseteq \mathcal{X}\times \mathcal{Y}$ 
closed. Further let $\mu\in \mathcal{P}(\mathcal{X}),\nu\in \mathcal{P}(\mathcal{Y})$. Then the following are equivalent:
\begin{itemize}
\item[(1)] $\mu$ and $\nu$ are $\mathscr{J}$-related.
\item[(2)] $\nu(\mathscr{J}^+(A))\ge \mu(A)$ and $\mu(\mathscr{J}^-(B))\ge \nu(B)$ for all measurable $A\subseteq \mathcal{X}$ and $B\subseteq \mathcal{Y}$. 
\end{itemize}
\end{theorem}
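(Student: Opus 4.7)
The direction (1) $\Rightarrow$ (2) is a direct computation. If $\pi\in\Pi(\mu,\nu)$ satisfies $\pi(J)=1$, then for any Borel $A\subseteq \mathcal{X}$ the inclusion $(A\times\mathcal{Y})\cap J\subseteq A\times J^+(A)$ yields
\[
\mu(A)=\pi(A\times\mathcal{Y})=\pi\bigl((A\times\mathcal{Y})\cap J\bigr)\le \pi(\mathcal{X}\times J^+(A))=\nu(J^+(A)),
\]
and the inequality for $J^-$ follows symmetrically. Here $J^+(A)$ and $J^-(B)$ are analytic, hence universally measurable, so the above expressions make sense.

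For (2) $\Rightarrow$ (1) my plan is to follow the strategy of Strassen's marginal theorem (see e.g.\ \cite{villani}, Chapter~2). A preliminary observation is that a complementation argument using $\mu(\mathcal{X})=\nu(\mathcal{Y})=1$ shows that the two inequalities in (2) are equivalent: given $B$, take $A:=\mathcal{X}\setminus J^-(B)$ and note $J^+(A)\cap B=\emptyset$. By inner regularity of $\mu,\nu$ combined with a weak-compactness patching argument, it suffices to treat the compact case, so assume $\mathcal{X}$ and $\mathcal{Y}$ are compact. Next, approximate $\mu,\nu$ weakly by finitely supported probability measures $\mu_n,\nu_n$ with rational masses, obtained by subdividing $\mathcal{X}$ and $\mathcal{Y}$ into small Borel cells and collapsing the mass of each cell onto a chosen representative. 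Simultaneously enlarge $J$ to its closed $\e_n$-thickening $J_{\e_n}$, with $\e_n\to 0$ taken large compared to the partition diameter, so that condition (2) is preserved by $(\mu_n,\nu_n,J_{\e_n})$.

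For this discrete system, clearing denominators reformulates (2) as Hall's condition on a finite weighted bipartite graph whose vertex classes are the atoms of $\mu_n$ and $\nu_n$ and whose edges encode $J_{\e_n}$. Hall's marriage theorem (equivalently, max-flow min-cut) then furnishes a discrete coupling $\pi_n\in\Pi(\mu_n,\nu_n)$ concentrated on $J_{\e_n}$. Finally, tightness of the marginals implies tightness of $(\pi_n)$; a weakly convergent subsequence $\pi_n\to\pi$ is a coupling of $\mu$ and $\nu$, and since $J$ is closed one has $\bigcap_n J_{\e_n}=J$, so by the Portmanteau theorem $\pi(J)\ge\limsup_n\pi_n(J_{\e_n})=1$.

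The step I expect to be the main obstacle is the discretization: transferring (2) for $(\mu,\nu,J)$ into Hall's condition for $(\mu_n,\nu_n,J_{\e_n})$. A $\nu_n$-atom lying in a $\mathcal{Y}$-cell that meets $J^+(A)$ only partially cannot be charged to $\nu(J^+(A))$ in a straightforward way, and one has to calibrate $\e_n$ against the partition diameter so that $J_{\e_n}$ absorbs the resulting displacement error. The closedness of $J$ is essential throughout, as it is precisely what guarantees $\bigcap_n J_{\e_n}=J$ and hence what concentrates the weak limit $\pi$ on $J$.
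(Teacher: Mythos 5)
Your proposal is essentially the same strategy as the paper's: discretize $\mu,\nu$ to finitely supported measures, replace $J$ by an $\varepsilon$-thickening so that condition (2) survives the discretization, apply a Hall-type combinatorial theorem to produce a discrete coupling supported on the thickened $J$, and then use tightness plus the closedness of $J$ to pass to the limit. The $(1)\Rightarrow(2)$ direction is identical. Your complementation argument showing the two inequalities in (2) are equivalent is correct and is implicitly present in the paper's Lemma \ref{L2}.

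Where you differ is in the level of detail in the combinatorial core and in one delicate technical step. The paper does not invoke Hall's theorem as a black box but reproves the needed instance from scratch as Lemma \ref{L3}, and, more importantly, it inserts an auxiliary decomposition lemma (Lemma \ref{L2}): if $\mu(A)=\nu(J^+(A))$ for some $A$ with $0<\mu(A)<1$, the problem splits into two independent sub-problems on $A$ and on $A^c$. This is what licenses the inductive reduction to the case where all inequalities for proper subsets are \emph{strict}, which in turn is what makes the paper's perturbation to rational weights safe (a tight inequality could be destroyed by rounding). You identify the rounding step as ``the main obstacle'' but do not really close it; the clean way out, which you mention parenthetically, is to drop the rational-weight reduction entirely and apply max-flow/min-cut (or the LP duality form of Strassen's theorem) directly to the real-weighted finite bipartite problem, where no rounding is needed. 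If you take that route your argument is sound and in fact slightly shorter than the paper's. One further small caution: the reduction ``it suffices to treat the compact case'' by restricting $\mu,\nu$ to large compact sets is not automatic, since the restricted measures need not satisfy (2) relative to the restricted $J$; the paper sidesteps this by working with tightness of the approximating sequence directly rather than by truncating the original measures.
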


Recently \cite{eckmil} has proven a similar statement for causally simple Lorentzian spacetimes. Therein the authors study the existence of causal 
couplings for different causality assumptions via causal function, a relaxed notion of time function.

After addressing the existence problem of optimal couplings attention turns towards the structure of the optimal couplings. Recall that a set $A\subseteq M\times M$ 
is {\it $c_L$-cyclically monotone} if 
$$\sum c_L(p_i,q_i)\le \sum c_L(p_i,q_{\sigma(i)})$$
for all $\{(p_i,q_i)\}_{1\le i\le n} \subseteq A$ and all $\sigma \in S(n)$. 

Define
$$\mathcal{P}_\tau^+(M):=\{(\mu,\nu)|\; \mu,\nu\in \mathcal{P}_\tau(M)\text{ are $J^+$-related}\}.$$

\begin{prop}\label{propcycmon}
Let $(\mu,\nu)\in \mathcal{P}_\tau^+(M)$. 
\begin{itemize}
\item[(1)] One has
$$C_L(\mu,\nu)= \sup\left(\int_{M} \phi(q) d\nu(q)-\int_M \psi(p)d\mu(p)\right)$$
where the supremum is taken over the functions $\psi\in L^1(\mu),\phi\in L^1(\nu)$ with 
$\phi(q)-\psi(p)\le c_L(p,q)$.
\item[(2)] Every optimal coupling $\pi$ is concentrated on a $c_L$-cyclic monotone Borel subset 
of $M\times M$.
\end{itemize}
\end{prop}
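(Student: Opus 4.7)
The plan is to verify the hypotheses of the abstract Kantorovich duality and cyclical monotonicity theorem of \cite{ap}, which applies to any lower semi-continuous cost admitting a decomposition $c(x,y) \ge a(x) + b(y)$ with $a \in L^1(\mu)$, $b \in L^1(\nu)$. Specialized to $c_L$, the general statement yields (1) and (2) directly.

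The first step is to produce the integrable lower bound. Fixing a splitting $M \cong \R \times N$ whose time function $\tau$ satisfies $-d\tau \le L$ and $\tau \in L^1(\mu) \cap L^1(\nu)$, any causal curve $\gamma$ from $x$ to $y$ satisfies $\mathcal{A}(\gamma) \ge \tau(x) - \tau(y)$ upon integration of the pointwise inequality. Taking the infimum,
$$c_L(x,y) \ge \tau(x) - \tau(y) \quad \text{on } J^+, \qquad c_L(x,y) = +\infty \quad \text{off } J^+.$$
Since $L \le 0$ on $\mathcal{C}$ one also has $c_L \le 0$ on $J^+$; combined with $J^+$-relatedness, which provides a coupling $\pi_0$ concentrated on $J^+$, the two-sided bound squeezes $\int c_L\, d\pi_0$ into a finite range, so in particular $C_L(\mu,\nu) < \infty$.

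The second step is lower semi-continuity of $c_L$. If $(x_n, y_n) \to (x, y)$ and $\liminf c_L(x_n, y_n) < \infty$, the endpoints eventually lie in compact neighborhoods $K_1$, $K_2$ of $x$, $y$, so by global hyperbolicity the action-minimizing causal curves $\gamma_n$ from $x_n$ to $y_n$ (furnished by the preceding proposition) are confined to the compact diamond $J^+(K_1) \cap J^-(K_2)$. A standard limit-curve argument then produces a causal subsequential limit $\gamma_\infty$ from $x$ to $y$, and lower semi-continuity of $\mathcal{A}$ along this limit gives $c_L(x,y) \le \mathcal{A}(\gamma_\infty) \le \liminf c_L(x_n, y_n)$; closedness of $J^+$ settles the remaining case $y \notin J^+(x)$.

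With these two inputs in hand the theorem of \cite{ap} applies verbatim and delivers both conclusions: the duality formula in (1) with potentials in $L^1$, and the concentration of every finite optimal coupling on a $c_L$-cyclically monotone Borel set, which is (2). The expected main obstacle is the lower semi-continuity of the action along sequences of reparametrization-invariant causal curves in the Lorentz--Finsler setting; the splitting inequality $-d\tau \le L$ is the reservoir of integrability that makes the lower bound work, and the hypothesis $\tau \in L^1(\mu) \cap L^1(\nu)$ is precisely what is needed to unlock it.
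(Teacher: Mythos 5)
The paper offers no proof beyond citing \cite{ap}, and your proposal does exactly what that citation requires: you verify the integrable lower bound $c_L(x,y)\ge\tau(x)-\tau(y)$ coming from $-d\tau\le L$ and the lower semi-continuity of $c_L$ via global hyperbolicity and a limit-curve argument, then invoke the duality/cyclical-monotonicity theorem. This is the same approach as the paper, only with the hypotheses made explicit.
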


\begin{proof}
Consider the modified cost function $c_L'(p,q):=c_L(p,q)+\tau(q)-\tau(p)$. Since $0\le L(v)+d\tau(v)$ 
it follows that $c_L'\ge 0$. Now the claim follows from \cite[Theorem 3.1,3.2]{ap}.
\end{proof}

Denote by $\Gamma$ the set of $\mathcal{A}$-minimizers $\gamma\colon [0,1]\to M$ such that 
$$d\tau(\dot\gamma)\equiv \tau(\gamma(1))-\tau(\gamma(0)).$$ 
Set $\EV\colon \Gamma\times [0,1]\to M$, 
$(\gamma,t)\mapsto \gamma(t)$ and $\ev_t:=\ev(.,t)$. For $(p,q)\in J^+$ consider the subspace 
$$\Gamma_{p\to q}:=\ev_1\nolimits^{-1}(q)\cap \ev_0\nolimits^{-1}(p).$$ 

Recall the definition of an dynamical optimal  coupling from \cite{villani}.

\begin{definition}\label{defdynoptcou}
A {\it dynamical optimal coupling} is a probability measure $\Pi$ on $\Gamma$ such that $\pi:=(\ev_0,\ev_1)_\sharp \Pi$ is an optimal coupling between 
$\mu:=(\ev_0)_\sharp\Pi$ and $\nu:=(\ev_1)_\sharp \Pi$. 
\end{definition}

\begin{prop}\label{dynoptcou}
For every $(\mu,\nu)\in \mathcal{P}_\tau^+(M)$ there exists a dynamical optimal coupling $\Pi$ 
for $\mu$ and $\nu$. 
\end{prop}

Define the map $\tev\colon \Gamma\times [0,1]\to PTM$, $(\gamma,t)\mapsto [\dot\gamma(t)]\in PTM_{\gamma(t)}$ where $PTM$ denotes the 
projective tangent bundle. For the canonical projection $P\colon PTM\to M$ one has $\EV =P\circ \tev$. Denote with $\supp\mu$ the support of the 
measure $\mu$. 

\begin{theorem}\label{intermediateregularity}
Let $(\mu,\nu)\in \mathcal{P}_\tau^+(M)$ with $\supp\mu\cap\supp\nu=\emptyset$. Then every dynamical optimal 
coupling $\Pi$ of $\mu$ and $\nu$ has the following property: The canonical projection $P$ restricted 
to the image of $T:=\tev(\supp\Pi\times ]0,1[)$ is injective. 
Further the inverse $(P|_{T})^{-1}$ is locally H\"older continuous with exponent $1/2$. 
\end{theorem}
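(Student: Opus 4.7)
My plan is to exploit the $c_L$-cyclic monotonicity from Proposition \ref{propcycmon} via a swap argument — qualitatively for injectivity, then quantitatively for the Hölder estimate.

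\textbf{Injectivity of $P|_T$.} Suppose $(\gamma_1,t_1), (\gamma_2,t_2) \in \supp\Pi \times \,]0,1[$ satisfy $\gamma_1(t_1) = \gamma_2(t_2) = x$. I form the causal curves
\[
\alpha := \gamma_1|_{[0,t_1]} \star \gamma_2|_{[t_2,1]}, \qquad \beta := \gamma_2|_{[0,t_2]} \star \gamma_1|_{[t_1,1]}
\]
reparameterized monotonically onto $[0,1]$. Since $L$ is positively $1$-homogeneous, $\mathcal{A}$ is invariant under monotone reparameterization, so $\mathcal{A}(\alpha) + \mathcal{A}(\beta) = \mathcal{A}(\gamma_1) + \mathcal{A}(\gamma_2)$. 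Proposition \ref{propcycmon}(2), applied to the coupling $\pi := (\ev_0,\ev_1)_\sharp\Pi$ at the pairs $(\gamma_1(0),\gamma_1(1)), (\gamma_2(0),\gamma_2(1)) \in \supp\pi$, then gives
\[
\mathcal{A}(\gamma_1) + \mathcal{A}(\gamma_2) \le c_L(\gamma_1(0),\gamma_2(1)) + c_L(\gamma_2(0),\gamma_1(1)) \le \mathcal{A}(\alpha) + \mathcal{A}(\beta),
\]
so equality holds throughout and $\alpha,\beta$ are themselves minimizers of $\mathcal{A}$. By the Euler--Lagrange regularity of the proposition preceding Proposition \ref{P1}, minimizers are $C^1$ on the interior, so the tangent vectors of $\alpha$ on either side of the join $x$ are positively proportional, forcing $[\dot\gamma_1(t_1)] = [\dot\gamma_2(t_2)]$. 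This yields injectivity of $P|_T$.

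\textbf{Hölder continuity.} Fix a compact $K \subseteq P(T)$ and a closed subinterval $[a,b] \subset \,]0,1[$, and consider pairs $(\gamma_i,t_i)$ with $\gamma_i(t_i) \in K$ and $t_i \in [a,b]$. Use a \emph{perturbed} swap: bridge by action-minimizing causal curves from $\gamma_1(t_1 - \varepsilon)$ to $\gamma_2(t_2 + \varepsilon)$ and from $\gamma_2(t_2 - \varepsilon)$ to $\gamma_1(t_1 + \varepsilon)$, for a small parameter $\varepsilon > 0$. When the bridges exist (i.e.\ are causal), the same cyclic-monotonicity argument yields
\[
\mathcal{A}(\gamma_1|_{[t_1-\varepsilon,t_1+\varepsilon]}) + \mathcal{A}(\gamma_2|_{[t_2-\varepsilon,t_2+\varepsilon]}) \le c_L(\gamma_1(t_1{-}\varepsilon), \gamma_2(t_2{+}\varepsilon)) + c_L(\gamma_2(t_2{-}\varepsilon), \gamma_1(t_1{+}\varepsilon)).
\]
Expanding both sides in local coordinates and using the $1$-homogeneity of $\sqrt{\mathbb{L}}$, this reduces to the concavity inequality
\[
\sqrt{\mathbb{L}(V+D)} + \sqrt{\mathbb{L}(V-D)} \ge \sqrt{\mathbb{L}(V+W)} + \sqrt{\mathbb{L}(V-W)},
\]
where $V := \tfrac{1}{2}(\dot\gamma_1(t_1){+}\dot\gamma_2(t_2))$, $D := \tfrac{1}{2}(\dot\gamma_1(t_1){-}\dot\gamma_2(t_2))$, and $W$ encodes the normalized position-and-time difference $(x_2{-}x_1,\,t_2{-}t_1)/(2\varepsilon)$. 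Strict concavity of $\sqrt{\mathbb{L}}$ on $\mathcal{C}$ transverse to the radial direction $\R_+ V$ (the only direction of degeneracy) gives $|D_\perp| \lesssim |W_\perp|$, and hence $d_{PTM}([\dot\gamma_1(t_1)], [\dot\gamma_2(t_2)]) \lesssim (d_h(x_1, x_2) + |t_1 - t_2|)/\varepsilon$.

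\textbf{Main obstacle.} The $1/2$ exponent comes from the trade-off in the choice of $\varepsilon$: for the bridging minimizers to remain causal — especially when the mean velocity $V$ is close to the null boundary $\partial\mathcal{C}$ — $\varepsilon$ must dominate $d_h(x_1, x_2)$ by a margin controlled by the distance of $V$ to $\partial\mathcal{C}$. Balancing this against the bound above with the choice $\varepsilon \sim d_h(x_1, x_2)^{1/2}$ satisfies the causality constraint uniformly for small separations and produces the claimed local $1/2$-Hölder estimate. The technical work is to make the concavity modulus of $\sqrt{\mathbb{L}}|_\mathcal{C}$ transverse to the radial direction, and the existence and action of the bridging minimizers, both uniform over $K$; these ingredients should follow from compactness together with the strict convexity of $\mathbb{L}$ and global hyperbolicity, but the delicate interplay near $\partial\mathcal{C}$ is the heart of the argument.
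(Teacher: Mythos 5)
Your proposal splits into an injectivity argument and a quantitative Hölder argument, and these fare quite differently against the paper.

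\textbf{Injectivity.} Your non-crossing argument is correct: if $\gamma_1(t_1)=\gamma_2(t_2)=x$, the swap together with $c_L$-cyclic monotonicity forces the concatenations $\alpha,\beta$ to be minimizers; the proposition on Euler--Lagrange regularity then says the \emph{image} of a minimizer is smooth with $\dot\gamma\in\mathcal{C}$ (the curve itself may only be a monotone reparameterization of an E--L solution, so it is the projective tangent, not the velocity, that is forced to match), hence $[\dot\gamma_1(t_1)]=[\dot\gamma_2(t_2)]$. This is cleaner than what the paper does: there injectivity is not argued separately but read off as the degenerate case of the quantitative estimate $\dist([\dot\gamma_1(t_1)],[\dot\gamma_2(t_2)])^2\le D\,\dist(\gamma_1(t_1),\gamma_2(t_2))$.

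\textbf{Hölder continuity --- genuine gap.} Here your sketch differs structurally from the paper and, as you yourself flag, leaves the crux unproven. You swap over variable-length bridges and propose $\varepsilon\sim d_h^{1/2}$, balancing a causality constraint against a concavity estimate $|D_\perp|\lesssim|W_\perp|$. Two concrete problems. First, the constant in ``strict concavity gives $|D_\perp|\lesssim|W_\perp|$'' is \emph{not} uniform: by Lemma \ref{strictconvexity}(ii) the Hessian of $L_\tau$ (equivalently, of $\sqrt{\mathbb{L}}$ transverse to the radial direction) blows up like $\delta/|L_\tau|$ near $\partial\mathcal{D}$, so the quadratic-model comparison is valid only on a $V$-dependent neighborhood that shrinks as $V\to\partial\mathcal{C}$, and nothing in your sketch shows $|D|,|W|$ stay inside it. Second, the causality margin you invoke depends on $\dist(V,\partial\mathcal{C})$, which is not controlled by $d_h$ alone, so the balance you describe does not by itself produce a $d_h^{1/2}$ law with a constant depending only on the compact set. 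The paper's mechanism is different and is precisely engineered to make the boundary degeneracy cancel: it first reparameterizes by $\tau$ to work with $\Phi_\tau$-trajectories of the time-dependent Lagrangian $L_\tau$; it then uses \emph{fixed}-length bridges but introduces a perturbed comparison curve $\chi_2$ based at $x_1(b)$ with $\dist(\dot\chi_2(b),\dot x_2(b))\lesssim\dist(x_1(b),x_2(b))$, and the key Lemma \ref{actionestimate} gives an action gain of the form $\frac{\delta}{|\mathcal{S}|}|\dot\eta(b)-\dot\gamma(b)|^2$, while the cost of replacing $x_2$ by $\chi_2$ is $\frac{C}{|\mathcal{S}|}\dist(x_1(b),x_2(b))$ (by the $\sqrt{\phantom{x}}$-type behavior of $\mathcal{S}$). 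The two $1/|\mathcal{S}|$ factors cancel, and the exponent $2$ falls out of the quadratic-versus-linear competition; causality is verified quantitatively via $\dist(\,\cdot\,,\partial J^+)$ rather than by shrinking $\varepsilon$. Your outline would need a fundamentally different set of uniform estimates near $\partial\mathcal{C}$ to close, and it is unclear that the $\varepsilon\sim d_h^{1/2}$ device produces them.
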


\begin{examp}
The following example shows the optimality of the H\"older continuity in Theorem 
\ref{intermediateregularity}. Consider Minkowski space $(\R^3,\langle.,.\rangle_1)$, i.e. 
$\langle.,.\rangle_1=dx^2+dy^2-dz^2$ for the natural coordinates $\{x,y,z\}$ on $\R^3$. Set 
$$\mathcal{C}:=\{v\in T\R^3|\; \langle v,v\rangle_1\le 0, dz(v)\ge 0\}$$ 
with the Lorentz-Finsler metric $L|_\mathcal{C}(v)=-\sqrt{|\langle v,v\rangle_1|}$. 

Next let $\Phi\colon \R\times (-\e,\e)\to \R^3$ be the map $(x,\phi)\mapsto (x+\cos\phi,\sin\phi,1)$
for $0<\e<\pi/2$. $\Phi$ is an embedding and for $x$ fixed the curve $\phi\mapsto \Phi(x,\phi)$ parameterizes 
$\partial J^+(x,0,0)\cap \{z=1\}$ near $(x+1,0,1)$. For $\phi\in (-\e,\e)$ the $\Phi$-preimage  
of 
$$\{z=1\}\cap \partial J^+((x-\cos\phi,-\sin\phi, -1))$$ 
near $\Phi(x,\phi)=(x+\cos\phi,\sin\phi,1)$ is described by a smooth function 
$$j_{(x,\phi)}\colon (-\e,\e)\to\R$$ 
with $j_{(x,\phi)}(\phi)=x$, $j_{(x,\phi)}'(\phi)=0$ and $j_{(x,\phi)}''(\phi)>0$. Choose a constant 
$j_{(x,\phi)}''(\phi)<C<\infty$ and consider the function $w\colon [0,\e)\to \R$, $w(\phi)=C\cdot 
\phi^2$.

By diminishing $\e$ if necessary the fact that
$$\{x\le w(\phi)\}\times (-\e,\e)\subset \Phi^{-1}(J^+(w(\phi)-\cos\phi,-\sin\phi,-1))$$
and the choice of $C\in\R$ imply that
$$\Phi^{-1}(J^+(w(\phi)-\cos\phi,-\sin\phi,-1))\cap\text{ graph}(w)\subset (-\infty,w(\phi)]\times [0,\phi]$$ 
for every $\phi\in[0,\e)$.
By restricting $\e$ further one can in fact assume that 
\begin{equation}\label{ex1}
\Phi^{-1}(J^+(w(\phi)-\cos\phi,-\sin\phi,-1))\cap\text{ graph}(w)=\text{ graph}(w|_{[0,\phi]}).
\end{equation}

Now consider the $1$-dimensional Lebesgue measure $\mu$ on the interval $\text{im}(w)=[0,C\cdot\e^2)$
normalized to $1$. Define two maps $\psi_{0,1}\colon [0,C\cdot \e^2)\to \R^3$ by setting 
$$\psi_0(x)=(x-\cos w^{-1}(x),-\sin w^{-1}(x),-1)$$
and
$$\psi_1(x)=(x+\cos w^{-1}(x),\sin w^{-1}(x),1).$$

Denote with
$\mu_0:=(\psi_0)_\sharp\mu$ and $\mu_1:=(\psi_1)_\sharp\mu$. Since 
\begin{equation}\label{ex2}
\psi_1(x)=\psi_0(x)+2(\cos w^{-1}(x),\sin w^{-1}(x),1)
\end{equation}
$\mu_0$ and $\mu_1$ are $J^+$-related. Due to \eqref{ex1} one knows that $(\psi_0)_\sharp \mu|_{[0,x]}$ is coupled by any causal coupling to 
$(\psi_1)_\sharp \mu|_{[0,x]}$ for all $x\in [0,C\cdot \e^2)$. Therefore up to changes on a neglectable set the only possible causal
coupling is induced by \eqref{ex2}. Thus every dynamical coupling $\Pi$ is concentrated on the 
curves 
$$\gamma_x\colon t\mapsto \psi_0(x) +2t(\cos w^{-1}(x),\sin w^{-1}(x),1),$$ 
$x\in [0,C\cdot \e^2)$ and their monotone reparameterizations.

Then the evaluations are 
$$\ev\left(\gamma_x,\frac{1}{2}\right)=\left(x,0,0\right)\text{ and }\tev\left(\gamma_x,\frac{1}{2}\right)
=[(\cos w^{-1}(x),\sin w^{-1}(x),1)].$$ 
The map $(P|_T)^{-1}$ is given by $(x,0,0)\mapsto [(\cos w^{-1}(x),\sin w^{-1}(x),1)]$ and is therefore 
only $\frac{1}{2}$-H\"older.
\end{examp}

The map $(P|_T)^{-1}$ in Theorem \ref{intermediateregularity} is Lipschitz for $m=2$, i.e. if $M$ is a
surface. This is a well known fact for positive definite Lagrangians relying on the fact that 
trajectories (1) solve a differential equation with smooth coefficients and (2) have codimension 
$1$ in a surface. These facts carry over readily to this case.

\begin{theorem}\label{lipreg}
Let $(\mu,\nu)\in \mathcal{P}_\tau^+(M)$ with disjoint supports. Further let $K$ be a compact subset of $\inte\mathcal{C}$, the open interior of $\mathcal{C}$. 
Then the canonical projection $P$ restricted to the image of $\tev(\supp\Pi\times ]0,1[)\cap K$ is injective and its inverse is Lipschitz for every dynamical optimal 
coupling $\Pi$.
\end{theorem}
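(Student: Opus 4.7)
The strategy is to upgrade the H\"older $1/2$ estimate of Theorem~\ref{intermediateregularity} to a Lipschitz estimate by exploiting the smoothness and strict fiberwise convexity of $\mathbb{L}$ on the compact timelike set $K\subseteq\inte\mathcal{C}$. In this regime the Lorentz--Finsler action inherits exactly the features of a positive definite Tonelli action, for which Lipschitz regularity of the interior transport map is a classical fact.

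First I would record the smooth structure on a neighborhood of $K$. Since $\mathbb{L}>0$ on $\inte\mathcal{C}$ and its second fiber derivative is nondegenerate of Lorentzian index, the Euler--Lagrange equation of $\mathbb{L}$ has smooth coefficients near $K$ and its Hamiltonian is strictly convex on the mass shell $\{\mathbb{L}=1\}$ near $K$. Any minimizer $\gamma\in\supp\Pi$ with $\dot\gamma(t)\in K$ for some $t\in(0,1)$ therefore solves this equation smoothly on a neighborhood of $t$, and the Lagrangian flow $\phi^L_s$ is a smooth local diffeomorphism at $(\gamma(t),\dot\gamma(t))$. In particular the endpoint map $(p,v)\mapsto(\phi^L_{-t}(p,v),\phi^L_{1-t}(p,v))$ is a local diffeomorphism at every $(p,v)$ with $v\in K$.

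Next I would apply the $c_L$-cyclic monotonicity of Proposition~\ref{propcycmon} to two trajectories $\gamma_1,\gamma_2\in\supp\Pi$ with $\dot\gamma_i(t_i)\in K$ whose base points $\gamma_i(t_i)$ are close. Setting $x_i=\gamma_i(0)$ and $y_i=\gamma_i(1)$, cyclic monotonicity gives
$$c_L(x_1,y_1)+c_L(x_2,y_2)\le c_L(x_1,y_2)+c_L(x_2,y_1).$$
Unlike in the proof of Theorem~\ref{intermediateregularity}, where the mixed costs are bounded using a causal bypass whose cost is only linear in the spatial separation $d=d_h(\gamma_1(t_1),\gamma_2(t_2))$, the local diffeomorphism from the previous step allows me to realize both mixed costs as actions of genuine smooth timelike minimizers $\sigma_1,\sigma_2$ that are $C^1$-close to $\gamma_1,\gamma_2$. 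A second-order Taylor expansion of the action around $\gamma_1,\gamma_2$, using strict convexity of the Hamiltonian on $K$, should yield
$$c_L(x_1,y_2)+c_L(x_2,y_1)-c_L(x_1,y_1)-c_L(x_2,y_2)\le -\alpha\,|\Delta v|^2+C\,d\,|\Delta v|,$$
with $\Delta v=[\dot\gamma_1(t_1)]-[\dot\gamma_2(t_2)]$, and $\alpha>0$, $C<\infty$ depending only on $K$ and the chosen splitting. Combined with the cyclic monotonicity inequality this forces $|\Delta v|\le (C/\alpha)\,d$, which is the desired Lipschitz bound for $(P|_T)^{-1}$ restricted to the intersection with $K$.

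The main obstacle is the last step: producing competitor trajectories whose cost excess carries the bilinear cross term $d\cdot|\Delta v|$ rather than a pure linear-in-$d$ term. This requires constructing $\sigma_1,\sigma_2$ from the smooth Lagrangian flow (not from a generic causal bypass) and using the Euler--Lagrange equation satisfied by $\gamma_1,\gamma_2$ to cancel the spatial first-order part. Away from the lightcone this is a standard Tonelli computation, but checking it for the $1$-homogeneous $L=-\sqrt{\mathbb{L}}$ requires some care with the reparameterization freedom; the strict convexity of $\mathbb{L}$ on the mass shell inside $\inte\mathcal{C}$ is what makes such a parameterization choice available.
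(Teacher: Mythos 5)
Your approach is correct in spirit but genuinely different from the one in the paper, and the paper's route is shorter. You propose to rerun the H\"older argument of Theorem~\ref{intermediateregularity} but with a sharper cross-term estimate: on the compact set $K\subseteq\inte\mathcal{C}$ the Lagrangian is smooth with nondegenerate fiber Hessian, so the mixed costs can be computed from the smooth Lagrangian flow, and the Euler--Lagrange equation cancels the first-order contribution so the error is $O(d\,|\Delta v|)$ rather than $O(d)$. This is precisely the content of Mather's shortening lemma in the Tonelli setting, and if you carry it out (in the $\tau$-parameterization, i.e.\ for $L_\tau$ on $\R\times TN$, to kill the radial degeneracy of the $1$-homogeneous $L$) it does give the Lipschitz bound. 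The paper instead avoids redoing the estimate: it fixes a compact $\overline K\subseteq\inte\mathcal{D}_\tau$ with $K\subseteq\pos\overline K$, builds a genuine Tonelli Lagrangian $L_0$ on $\R\times TN$ with $L_0\ge L_\tau$ and $L_0\equiv L_\tau$ on $\overline K$, observes that every $L_\tau$-minimizer with velocity in $\overline K$ is then an $L_0$-minimizer, and cites the classical Lipschitz regularity of transport plans for Tonelli Lagrangians. The extension trick buys brevity and modularity (the hard quadratic estimate is entirely outsourced), while your inline argument buys self-containment; both are sound, but note that the very step you flag as ``the main obstacle'' --- producing the $-\alpha|\Delta v|^2 + C\,d\,|\Delta v|$ excess from a second-order Taylor expansion along the flow --- is exactly the content of the cited classical theorem and so must actually be carried out, whereas the paper never needs it explicitly. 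Two small points of care for your route: quantities like $[\dot\gamma_1(t_1)]-[\dot\gamma_2(t_2)]$ should be read as distances in $PTM$ after fixing a section (projective classes do not subtract), and the reparameterization freedom you mention is resolved exactly by working with $L_\tau$ as in Lemma~\ref{strictconvexity}, whose part~(ii) supplies the uniform lower Hessian bound on $\overline K$ that makes your $\alpha$ strictly positive.
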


A set $X\subset M$ is {\it ($\mathcal{C}$-)achronal} if every timelike curve $\eta \colon I\to M$ intersects $X$ at most once. Using a splitting one sees that 
$X$ can be written as the graph of a function $f_X$ over a subset of $N$. With the same proof as for \cite[Proposition 14.25]{oneill}, one sees that $f_X$ is 
locally Lipschitz with respect to the metric induced by $h$. Now one can use a Lipschitz-continuous extension of $f_X$ to $N$ to say that $X$ is the subset of a 
locally Lipschitz hypersurface.

A locally Lipschitz hypersurface $X$ has a tangent  space almost everywhere and with the induced Riemannian metric defines a Lebesgue measure 
$\mathcal{L}_X$ on $X$. A measure concentrated on $X$ is {\it absolutely continuous with respect to the Lebesgue measure} if it is absolutely continuous 
with respect to $\mathcal{L}_X$. Note that this definition is independent of the chosen Riemannian metric since any pair of Lebesgue measures induced 
by Riemannian metrics are absolutely continuous with respect to each other.

Call a hypersurface $Y$ {\it locally uniformly spacelike} if for one (hence every) splitting there exists a locally Lipschitz continuous function $f_Y\colon N\to \R$ 
with $Y$ being the graph of $f_Y$ and for all compact $K\subseteq M$ there exists $\varepsilon >0$, such that the Hausdorff distance between 
$TY_y\cap T^1 M$ and $\mathcal{C}^1:=\mathcal{C}\cap T^1M$ is bounded below by $\varepsilon$ for all $y\in K\cap \Gamma$ such that $TY_y$ exists. 
$T^1M$ denotes the unit tangent bundle of $h$. With these notions the following generalization of in \cite[Theorem 4.3]{berpue} can be given.

\begin{theorem}\label{Thmmonge}
Let $(\mu,\nu)\in\mathcal{P}_\tau^+(M)$. Assume that $\mu$ and $\nu$ are concentrated 
on a locally uniformly spacelike hypersurface $A$ and an achronal set $B$, respectively. 
Further assume that $\mu$ is absolutely continuous with respect to the Lebesgue measure 
on $A$. Then there exists a unique optimal coupling $\pi$ and a Borel map $F\colon M\to M$ 
such that $\pi=(\id,F)_\sharp \mu$. 
\end{theorem}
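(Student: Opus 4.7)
The plan is to follow the Monge-type strategy of \cite{berpue}: use Kantorovich duality to extract a potential whose differential, together with the geometric rigidity of $A$ (spacelike) and $B$ (achronal), pins down the optimal coupling on a graph.

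First, Proposition \ref{P1} produces an optimal coupling $\pi$, and Proposition \ref{propcycmon} produces Kantorovich potentials $\psi\in L^1(\mu)$, $\phi\in L^1(\nu)$ with $\phi(y)-\psi(x)\le c_L(x,y)$ everywhere and equality $\pi$-almost everywhere; after a $c_L$-transform I may assume $\psi(x)=\sup_y[\phi(y)-c_L(x,y)]$. The local uniform spacelikeness of $A$ yields, for $y$ in compact subsets of $J^+(K\cap A)$, a uniform Lipschitz bound on $c_L(\cdot,y)$ restricted to $A$: tangent directions to $A$ stay uniformly far from $\partial\mathcal{C}$, controlling the first-variation cost of moving $x$ inside $A$. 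Taking a supremum shows $\psi|_A$ is locally Lipschitz, and since $\mu$ is absolutely continuous on the locally Lipschitz hypersurface $A$, Rademacher's theorem yields differentiability of $\psi|_A$ at $\mu$-a.e.\ $x$.

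Next I fix such a differentiability point $x\in A$, any $y$ with $(x,y)\in\supp\pi$, and any $\gamma\in\Gamma_{x\to y}$. The equality $\phi(y)-\psi(x)=c_L(x,y)$ together with the inequalities $\phi(y)-\psi(x')\le c_L(x',y)$ for $x'\in A$ near $x$ forces the first-order identity
\[
-d(\psi|_A)(x)\;=\;d_x c_L(\cdot,y)\big|_{T_xA}\;=\;\partial_v L(\dot\gamma(0))\big|_{T_xA},
\]
the last equality being the classical first-variation formula (valid at timelike initial vectors; directions on $\partial\mathcal{C}$ would force $\psi|_A$ into a one-sided null behaviour incompatible with differentiability). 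The key geometric step is then that the map
\[
\Sigma_x\ni v\longmapsto \partial_v L(v)\big|_{T_xA}\in T^*_xA, \qquad \Sigma_x:=\{L=-1\}\cap\inte\mathcal{C}_x,
\]
is injective. This follows from strict convexity of $\Sigma_x$ (which comes from nondegeneracy of the fiber Hessian of $\mathbb L$) together with the uniform transversality of $T_xA$ to $\mathcal{C}^1$: the one-dimensional kernel of the restriction $T^*_xM\to T^*_xA$ is the Lorentz-orthogonal line to $T_xA$, and uniform spacelikeness precisely ensures that this line is nowhere tangent to the Legendre image of $\Sigma_x$. Consequently $[\dot\gamma(0)]\in P\mathcal{C}_x$ is uniquely determined by $d(\psi|_A)(x)$.

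Having a unique initial direction, the whole future $\mathbb L$-geodesic ray from $x$ in this direction is determined; the achronality of $B$ then forces this ray to meet $B$ in at most one point, giving a unique endpoint $y=F(x)$. A standard measurable selection argument makes $F$ Borel, and since $\pi$ is concentrated on the graph $\{(x,F(x)):x\in A\}$ one obtains $\pi=(\id,F)_\sharp\mu$. Uniqueness of $\pi$ follows by applying the same argument to the midpoint $(\pi+\pi')/2$ of any two optimal couplings, which is itself optimal. The main obstacle I foresee is the injectivity of the restricted Legendre map in the second paragraph: the full Legendre map $\Sigma_x\to T^*_xM$ is injective by strict convexity, but one post-composes with a restriction whose kernel is a Lorentz-normal line, and one must verify that this kernel is never an infinitesimal chord of the Legendre image. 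The uniform spacelikeness hypothesis is tailored precisely to ensure this transversality uniformly over compact subsets of $A$; without uniformity the Borel measurability of $F$ (and hence the concentration of $\pi$ on the graph) could fail. A secondary technical point is the Lipschitz regularity of $\psi|_A$ away from compactly supported masses, which requires a careful localization of the duality using the splitting and the integrability $\tau\in L^1(\mu)\cap L^1(\nu)$.
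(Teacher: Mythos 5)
Your proposal is a genuinely different route from the paper's. The paper does not differentiate Kantorovich potentials at all. Instead it runs a Champion--de Pascale style argument: Proposition~\ref{mongeprop1} uses cyclic monotonicity together with a Lebesgue-point lemma (Lemma~\ref{lemleb}) and the Mather-type shortening estimate (Proposition~\ref{mathershortening}) to show that $\pi$-a.e.\ pair of targets of the same source lies on a single minimizer, and Proposition~\ref{mongeprop2} shows separately that the set of $x\in A$ lying on a minimizer meeting $B$ twice has $\mathcal{L}_A$-measure zero; finally one decomposes $\pi=\pi_\triangle+\pi_C$ into the diagonal and non-diagonal parts and checks the overlap is negligible. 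The reason the paper avoids the dual/Legendre approach is that $c_L$ is a homogeneous, degree-one, only one-sidedly finite cost whose fibre derivative blows up on $\partial\mathcal{C}$, so the potential $\psi|_A$ is in general not Lipschitz where the plan transports along nearly null directions; the Champion--de Pascale machinery was designed precisely for such degenerate Monge costs.

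There are concrete gaps in your argument. First, your claim that ``the achronality of $B$ forces this ray to meet $B$ in at most one point'' is false for null rays: achronality forbids a \emph{timelike} curve from meeting $B$ twice, but a null geodesic may meet an achronal set in more than one point (e.g.\ a null hyperplane in Minkowski space contains whole null geodesics). Handling exactly this case is the entire content of Proposition~\ref{mongeprop2}, which needs both a covering estimate (Lemma~\ref{mongeNeg}) and a nontrivial Lipschitz estimate on tangent directions (Lemma~\ref{mongeLip}); it is not a one-line consequence of achronality. Second, your Lipschitz bound for $\psi|_A$ and the first-variation identity $-d(\psi|_A)(x)=\partial_vL(\dot\gamma(0))|_{T_xA}$ both presuppose $\dot\gamma(0)\in\inte\mathcal{C}_x$, but you have not shown that $\pi$-a.e.\ pair is timelike related; the parenthetical remark that boundary directions ``would force one-sided null behaviour incompatible with differentiability'' is exactly the assertion that needs proof, and it fails on the set where $\supp\mu\cap\supp\nu\neq\emptyset$ (the theorem does \emph{not} assume disjoint supports, and the paper explicitly splits off the diagonal part $\pi_\triangle$ carried by constant minimizers, for which $\dot\gamma(0)=0\notin\Sigma_x$). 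Until you control the null and diagonal contributions, the Legendre map injectivity step has nothing to bite on for a $\mu$-non-negligible set of $x$.
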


Uniqueness fails if both $A$ and $B$ are allowed to be achronal only. Consider for example subsets $A, B\subset \partial J^-(p)$ in Minkowski space 
for some $p\in \R^{m}$. For suitable choices of $A$ and $B$ not every optimal coupling is 
supported on a graph. More precisely every causal coupling has vanishing cost, but not every causal coupling is supported on a graph.

Existence fails if $B$ is not assumed to be achronal. An example is given by $\mu$ defined as the $1$-dimensional Lebesgue measure on 
$[0,1]\times\{0\}$ in the $2$-dimensional Minkowski space and $\nu$ a nontrivial superposition of the $1$-dimensional Lebesgue measures on 
$[1,2]\times \{1\}$ and $[2,3]\times\{2\}$. The only possible causal coupling and therefore optimal one splits every point in $[0,1]\times\{0\}$ into two parts 
with weights depending on the superposition. Since the superposition is nontrivial the coupling cannot be induced by a graph.

\begin{theorem}\label{Thmmonge2}
Let $(\mu,\nu)\in \mathcal{P}_\tau^+(M)$. Assume that $\mu$ is absolutely continuous with respect 
to the Lebesgue measure on $M$ and $\nu$ is concentrated on an achronal set $B$. Then there exists a 
unique optimal coupling $\pi$ and a Borel map $F\colon M\to M$ such that $\pi=(\id,F)_\sharp \mu$.
\end{theorem}

Theorem \ref{Thmmonge} corresponds to the classical Monge problem which from the spacetime perspective deals with the problem of coupling two measures
concentrated on different level sets of a splitting $\tau$ (hence time function) and $\mu$ being absolutely continuous with respect to the Lebesgue measure on that level set.
Theorem \ref{Thmmonge2} on the other hand is a version where the initial measure is distributed in space and time, i.e. from the classical point of view 
a family of measures. 

\begin{remark}
In the spirit of the present approach all results are formulated with as little reference to the 
splitting $\tau$ as possible. Note that $\tau$ enters the assumptions of the main results only 
through an integrability condition, i.e. ``$\tau\in L^1(\mu)\cap L^1(\nu)$''. This is automatically 
satisfied for for compactly supported measures. I.e. in this special case all results are indeed 
independent of the splitting. 
\end{remark}

\section{The proofs}\label{proofs}

\subsection{Causal structures}\label{s3.1}

The existence of causal structures is implicitly stated in \cite[page 1534]{minguzzi141} and 
\cite[page 583]{minguzzi142}. The argument is standard material and known for Lorentzian metrics. For 
completeness it is briefly outlined here. 

Let $\mathbb{L}\colon TM\to \R$ be a continuous function positive homogenous of 
degree $2$ and smooth on $TM\setminus T^0M$ such that the second fiber derivative is nondegenerate with index $m-1$. By 
\cite[Proposition 2]{minguzzi141} the number $k$ of connected components of $TM_p\cap \{\mathbb{L}>0\}$ is 
independent of $p\in M$. Thus every point $p\in M$ has a neighborhood $U$ such that 
the fiber bundle $\pi_{TM}^{-1}(U)\cap \{\mathbb{L}>0\}\to U$ is isomorphic to $\sqcup_{i=1}^k 
U\times C_i\to U$ where $C_i$ denotes the forward time cone in the Minkowski $m$-space $\R^m_1$.
Let $\{U_l\}_{l\in \N}$ be a locally finite open covering of $M$ such that 
$$\pi_{TM}^{-1}(U_l)\cap \{\mathbb{L}>0\}\cong \sqcup_{i=1}^k U_l\times C_i.$$
Take the disjoint union 
$$\mathscr{M}:=\sqcup_l (U_l \times \{1,\ldots ,k\})$$
and define ``$\sim$'' to be the equivalence relation generated by $(p,r)\sim (q,s)$ if $p=q$ and $\{p\}\times C_r$ and 
$\{q\}\times C_s$ are mapped to the same connected component of $\{\mathbb{L}>0\}\cap TM_p$ by their respective 
trivializations. Now one shows that the set 
$$M^\mathbb{L}:=\mathscr{M}/\sim$$ 
is a smooth manifold and the map $\pi^\mathbb{L}\colon M^\mathbb{L}\to M$, $[(p,r)]\mapsto p$
is a finite covering. See \cite[Chapter 7]{oneill} for the case of Lorentzian manifolds. 

Consider the pullback $\overline{\mathbb{L}}:=(\pi^\mathbb{L})^*\mathbb{L}$. Then for
every $[(p,r)]\in M^\mathbb{L}$ and every connected component of $TM^\mathbb{L}_{[(p,r)]}\cap 
\{\overline{\mathbb{L}}>0\}$ there exists a vector field $X\in \Gamma(TM^\mathbb{L})$ with 
$X_{[(q,s)]}\in TM^\mathbb{L}_{[(q,s)]}\cap \{\overline{\mathbb{L}}>0\}$ for all $[(q,s)]\in 
M^\mathbb{L}$, i.e. it belongs to a causal structure. This causal structure is unique since the 
components of $TM^\mathbb{L}_{[(p,r)]}\cap \{\overline{\mathbb{L}}>0\}$ are strictly convex.

\subsection{Proof of Theorem \ref{P2}}

(1)$\Rightarrow$(2):  Let $\pi \in \mathcal{P}(\mathcal{X}\times \mathcal{Y})$ be a coupling of $\mu$ and  $\nu$ 
with $\pi(\mathscr{J})=1$. For any set $B\subseteq \mathcal{Y}$ one has 
$$\pi_{\mathcal{Y}}^{-1}(B)\cap \mathscr{J}\subseteq \pi_{\mathcal{X}}^{-1}(\mathscr{J}^-(B)).$$
Since 
$$\nu(B)=\pi(\pi_{\mathcal{Y}}^{-1}(B))=\pi(\pi_{\mathcal{Y}}^{-1}(B)\cap \mathscr{J})$$ 
and 
$$\mu(\mathscr{J}^-(B))=\pi(\pi_{\mathcal{X}}^{-1}(\mathscr{J}^-(B)))$$ 
for $B\subseteq \mathcal{Y}$ measurable, the claim follows. The other inclusion is analogous.

(2)$\Rightarrow$(1): For this part of the proof one needs two lemmata.

\begin{lemma}\label{L2}
Assume that $\mu$ and $\nu$ satisfy the condition in Theorem \ref{P2} (2). 

If there exists a measurable set $A\subseteq \mathcal{X}$ such that 
$\mu(A)= \nu(\mathscr{J}^+(A))\in (0,1)$ then the pairs 
$$(\mu_A,\nu_A):=\left(\frac{1}{\mu(A)}\mu|_{A}, \frac{1}{\mu(A)}\nu|_{\mathscr{J}^+(A)}\right)$$
and
$$(\mu_{A^c},\nu_{A^c}):=\left(\frac{1}{\mu(A^c)}\mu|_{A^c}, \frac{1}{\mu(A^c)}\nu|_{\mathscr{J}^+(A)^c}\right)$$
satisfy the condition in Theorem \ref{P2} (2). 

If $\nu(B)= \mu(\mathscr{J}^-(B))\in (0,1)$ for a measurable set $B\subseteq \mathcal{Y}$ the pairs 
$$(\mu_B,\nu_B):=\left(\frac{1}{\nu(B)}\mu|_{\mathscr{J}^-(B)}, \frac{1}{\nu(B)}\nu|_{B}\right)$$
and 
$$(\mu_{B^c},\nu_{B^c}):=\left(\frac{1}{\nu(B^c)}\mu|_{\mathscr{J}^-(B)^c}, \frac{1}{\nu(B^c)}\nu|_{B^c}\right)$$ 
satisfy the condition in Theorem \ref{P2} (2).
\end{lemma}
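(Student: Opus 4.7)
The plan is to exploit the equality hypothesis $\mu(A) = \nu(J^+(A))$ to show that \emph{any} coupling $\pi$ concentrated on $J$ must split as an orthogonal sum of a piece on $A \times J^+(A)$ and a piece on $A^c \times J^+(A)^c$. The normalized pieces will then serve as $J$-couplings for the two restricted pairs, and condition (2) will follow from the already established implication (1)$\Rightarrow$(2) applied to each of them.

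For Step one (forcing the decomposition), fix a coupling $\pi$ with $\pi(J) = 1$. The trace of $\pi$ on $J$ restricted to $A \times \mathcal{Y}$ necessarily lands in $A \times J^+(A)$, which yields the chain
\[
\mu(A) \;=\; \pi\bigl((A \times \mathcal{Y}) \cap J\bigr) \;\le\; \pi\bigl(A \times J^+(A)\bigr) \;\le\; \pi\bigl(\mathcal{X} \times J^+(A)\bigr) \;=\; \nu\bigl(J^+(A)\bigr).
\]
The assumed equality of the two ends collapses every intermediate inequality, so by additivity both $\pi(A \times J^+(A)^c) = 0$ and $\pi(A^c \times J^+(A)) = 0$. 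Setting
\[
\pi_A := \frac{1}{\mu(A)}\, \pi\big|_{A \times J^+(A)}, \qquad \pi_{A^c} := \frac{1}{\mu(A^c)}\, \pi\big|_{A^c \times J^+(A)^c},
\]
the vanishing of the two off-diagonal blocks makes the marginal computation routine: $\pi_A$ is a probability measure with first marginal $\mu_A$ and second marginal $\nu_A$, and similarly for $\pi_{A^c}$. Since both are concentrated on $J$, each restricted pair is $J$-related, and applying (1)$\Rightarrow$(2) to each pair yields the desired condition (2). The second half of the lemma, where the equality is realized by a set $B \subseteq \mathcal{Y}$ via $\nu(B) = \mu(J^-(B))$, is handled by the symmetric argument: swap the roles of the two factors and of $J^+, J^-$, and repeat verbatim.

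The only mildly delicate point — the main (minor) obstacle — is the verification in Step two that the second marginal of $\pi_A$ is proportional to $\nu|_{J^+(A)}$ on \emph{arbitrary} measurable sets, not merely dominated by it; this is precisely where one invokes $\pi(A^c \times J^+(A)) = 0$ to identify $\pi(\mathcal{X} \times (\,\cdot \cap J^+(A))) = \pi(A \times (\,\cdot \cap J^+(A)))$. Once this identity is in hand, all marginal computations and the concentration of $\pi_A, \pi_{A^c}$ on $J$ are formal.
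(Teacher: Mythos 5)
Your argument is internally correct as a proof of the lemma \emph{exactly as worded}: if a coupling $\pi$ with $\pi(J)=1$ exists, the equality $\mu(A)=\nu(J^+(A))$ indeed forces $\pi(A\times J^+(A)^c)=\pi(A^c\times J^+(A))=0$, the two normalized blocks $\pi_A,\pi_{A^c}$ are genuine $J$-couplings of the restricted pairs, and one can then invoke (1)$\Rightarrow$(2). The marginal computations you flag as the delicate point do go through precisely because of the two vanishing off-diagonal blocks.

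However, you are using the wrong hypothesis, and this matters. The paper's proof never touches a coupling: it derives the restricted inequalities directly from condition~(2), e.g.\ $\mu_A(B)=\tfrac{1}{\mu(A)}\mu(B\cap A)\le \tfrac{1}{\mu(A)}\nu(J^+(B\cap A))\le \nu_A(J^+(B))$, and handles the other cases by contradiction from the set identity $J^+(C\cup A)=J^+(C)\cup J^+(A)$ and complementation. The reason this distinction is not cosmetic is the role the lemma plays: it is invoked inside the proof of Theorem~\ref{P2}, direction (2)$\Rightarrow$(1), at a stage where the measures are only known to satisfy the inequalities in (2) --- the existence of a coupling with $\pi(J)=1$ is precisely what is being proved. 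Your proof presupposes that coupling, and then additionally appeals to the implication (1)$\Rightarrow$(2). Plugging your version of the lemma into the inductive reduction of the (2)$\Rightarrow$(1) argument would therefore be circular. The wording ``$J$-related'' in the lemma's statement is a bit loose, but the substance of the paper's proof makes clear that the operative hypothesis is~(2), and that is what your argument should start from: establish the four inequalities for $\mu_A,\nu_A,\mu_{A^c},\nu_{A^c}$ by measure-theoretic manipulations from the inequalities for $\mu,\nu$, without ever producing a coupling.
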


\begin{proof}
It suffices to consider the first case. The second case follows by exchange of $\mathcal{X}$ and $\mathcal{Y}$. 
So assume $\mu(A)= \nu(\mathscr{J}^+(A))\in (0,1)$ for some measurable set $A\subseteq \mathcal{X}$. First note that all four measures $\mu_A$, $\nu_A$, 
$\mu_{A^c}$ and $\nu_{A^c}$ are well defined probability measures by the assumption. One has 
\begin{align*}
\mu_A(B)=\frac{1}{\mu(A)} \mu(B\cap A)&\le \frac{1}{\mu(A)} \nu(\mathscr{J}^+(B\cap A))\\
&= \frac{1}{\mu(A)} \nu(\mathscr{J}^+(B)\cap \mathscr{J}^+(A))= \nu_A(\mathscr{J}^+(B))
\end{align*}
which shows $\mu_A(B)\le \nu_A(\mathscr{J}^+(B))$. 

Next note that $\mu(A^c)=\nu(\mathscr{J}^+(A)^c)$. Assume that there exists a measurable set $C\subseteq \mathcal{X}$ with $\nu_{A^c}(\mathscr{J}^+(C))<\mu_{A^c}(C)$, i.e.
$$\nu(\mathscr{J}^+(C)\cap \mathscr{J}^+(A)^c)=\nu|_{\mathscr{J}^+(A)^c}(\mathscr{J}^+(C)) < \mu|_{A^c}(C)=\mu(C\cap A^c).$$
Then a contradiction follows from 
\begin{align*}
\mu(C\cup A)&=\mu(C\cap A^c)+\mu(A)\\
&>\nu(\mathscr{J}^+(C)\cap \mathscr{J}^+(A)^c)+\nu(\mathscr{J}^+(A))=\nu(\mathscr{J}^+(C\cup A))
\end{align*}
since $\mathscr{J}^+(C)\cup \mathscr{J}^+(A)=\mathscr{J}^+(C\cup A)$. Therefore one has 
$$\mu_{A^c}(C)\le \nu_{A^c}(\mathscr{J}^+(C))$$ 
for all measurable $C\subseteq \mathcal{X}$. This shows the first set of inequalities. 

It remains to show $\mu_A(\mathscr{J}^-(D))\ge \nu_A(D)$ and $\mu_{A^c}(\mathscr{J}^-(D))\ge \nu_{A^c}(D)$ for $D\subseteq \mathcal{Y}$ measurable. 
If $\mu_A(\mathscr{J}^-(D))< \nu_A(D)$ one has
$$\mu_A(\mathscr{J}^-(D)^c)= 1-\mu_A(\mathscr{J}^-(D)) >1- \nu_A(D)\ge \nu_A(\mathscr{J}^+(\mathscr{J}^-(D)^c))$$
since $\mathscr{J}^+(\mathscr{J}^-(D)^c)$ and $D$ are disjoint. This contradicts the first part. The inequality $\mu_{A^c}(\mathscr{J}^-(D))\ge \nu_{A^c}(D)$ follows analogously.
\end{proof}

\begin{lemma}\label{L3}
Let $n\in \N$. Consider the product $\{1,\ldots,n\}\times \{1,\ldots,n\}$ with the canonical projections $\pi_{1},\pi_2$ onto the first and second factor, 
respectively. Let $\mathscr{K}\subseteq \{1,\ldots,n\}\times \{1,\ldots,n\}$ have the property that 
\begin{equation}
\sharp \pi_1(\pi_2^{-1}(A)\cap \mathscr{K})\ge \sharp A \text{ and } \sharp \pi_2(\pi_1^{-1}(A)\cap \mathscr{K})\ge \sharp A
\end{equation}
for all $A\subseteq \{1,\ldots,n\}$.
Then $\mathscr{K}$ contains the graph of a permutation $\sigma \in S(n)$.
\end{lemma}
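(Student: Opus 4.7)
The plan is to recognize Lemma \ref{L3} as a discrete version of Hall's marriage theorem. The bipartite graph in question has vertex sets $X=\{1,\ldots,N\}$ and $Y=\{1,\ldots,N\}$, with edge set $J$. The first hypothesis $\sharp \pi_2(\pi_1^{-1}(A)\cap J)\ge \sharp A$ for all $A\subseteq X$ is precisely Hall's condition: the neighbourhood of $A$ in $Y$ has cardinality at least $\sharp A$. By Hall's theorem there exists a matching saturating $X$, and since $\sharp X=\sharp Y=N$ this matching is perfect, hence the graph of a permutation $\sigma\in S(N)$. I note in passing that the second hypothesis is redundant for this finite statement; it is recorded in the symmetric form in which the lemma will be used inside the proof of Theorem \ref{P2}.

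Since the result is quoted as a lemma within the paper, I would give a self-contained proof by induction on $N$ rather than citing Hall. The base $N=1$ is immediate: applying the hypothesis to $A=\{1\}$ forces $(1,1)\in J$.

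For the induction step I would distinguish two cases. In the first case, suppose that for every nonempty proper $A\subsetneq X$ the Hall inequality is \emph{strict}, that is $\sharp\pi_2(\pi_1^{-1}(A)\cap J)\ge \sharp A+1$. Choose any $(i_0,j_0)\in J$ (which exists by applying the hypothesis to $A=\{i_0\}$ for any $i_0$) and set $J':=J\cap ((X\setminus\{i_0\})\times (Y\setminus\{j_0\}))$. For every $A\subseteq X\setminus\{i_0\}$ the removal of the single column $j_0$ decreases the image cardinality by at most one, so
$$\sharp\pi_2(\pi_1^{-1}(A)\cap J')\ge \sharp\pi_2(\pi_1^{-1}(A)\cap J)-1\ge \sharp A,$$
and the induction hypothesis, applied to $J'$ on $\{1,\ldots,N\}\setminus\{i_0,j_0\}$, extends $(i_0,j_0)$ to the desired permutation.

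In the second case, there exists a nonempty proper $A\subsetneq X$ with equality $\sharp\pi_2(\pi_1^{-1}(A)\cap J)=\sharp A$. Set $B:=\pi_2(\pi_1^{-1}(A)\cap J)\subseteq Y$. I would then apply the induction hypothesis separately to the pair $(A,B)$ with edge set $J\cap(A\times B)$, and to the pair $(A^c,B^c)$ with edge set $J\cap(A^c\times B^c)$. Hall's condition for $(A,B)$ is inherited directly, since $\pi_2(\pi_1^{-1}(C)\cap J)\subseteq B$ for every $C\subseteq A$. For $(A^c,B^c)$, for any $C\subseteq A^c$ the identity
$$\pi_2(\pi_1^{-1}(A\cup C)\cap J)=B\cup\pi_2(\pi_1^{-1}(C)\cap J\cap (A^c\times B^c))$$
combined with the hypothesis $\sharp\pi_2(\pi_1^{-1}(A\cup C)\cap J)\ge \sharp A+\sharp C$ and the equality $\sharp B=\sharp A$ yields the required bound on the second component. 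Splicing the two permutations produced by the induction hypothesis gives $\sigma\in S(N)$ whose graph lies in $J$.

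The main obstacle is the book-keeping in the second case, in particular the verification that $J\cap(A^c\times B^c)$ still satisfies Hall's condition on the complementary factor. This reuses exactly the inclusion-exclusion manipulation already carried out in Lemma \ref{L2}, so no new ideas are required beyond those already present in the excerpt.
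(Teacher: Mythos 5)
Your proof is correct and follows essentially the same inductive strategy as the paper: base case $N=1$, then a dichotomy between Hall's condition being everywhere strict on proper subsets (delete one matched pair $(i_0,j_0)$ and recurse) and being tight on some proper subset $A$ (split into the sub-instances on $A\times B$ and $A^c\times B^c$ and recurse on each). The one genuine streamlining is your observation that, because $\sharp X=\sharp Y=N$, only one of the two hypothesised inequalities is actually needed, so the induction only has to propagate a single Hall condition, whereas the paper carries both directions through each case; this shortens the bookkeeping but does not change the structure of the argument.
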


\begin{proof}
The proof is carried out by induction over $n$. If $n=1$ the claim is trivial since $\mathscr{K}= \{1\}\times\{1\}$. 

Now assume that the claim has been shown for numbers less than $n$. First assume that 
$$\sharp \pi_1(\pi_2^{-1}(A)\cap \mathscr{K})> \sharp A\text{ and }\sharp \pi_2(\pi_1^{-1}(A)\cap \mathscr{K})> \sharp A$$ 
for all nonempty proper subsets $A$. Choose $1\le j\le n$ with $(n,j)\in \mathscr{K}$. By renumbering one can 	assume $j=n$. Now consider 
$$\mathscr{I}:= \mathscr{K}\cap  \{1,\ldots,n-1\}\times \{1,\ldots,n-1\}.$$ 
Since 
$$ \sharp{\pi}_1({\pi}_2^{-1}(A)\cap \mathscr{I})\ge  \sharp \pi_1(\pi_2^{-1}(A)\cap \mathscr{K})-1\ge \sharp A$$
and vice versa for all $A\subseteq \{1,\ldots,n-1\}$ one obtains from the induction hypothesis a permutation $o\in S(n-1)$ whose graph is contained
in $\mathscr{I}$. $o$ extends to a permutation $\sigma \in S(n)$ whose graph is a subset of $\mathscr{K}$ by setting $\sigma(n):= n$ and 
$\sigma|_{\{1,\ldots, n-1\}}\equiv o$. 

If there exist a nonempty proper subset $A$ of $\{1,\ldots ,n\}$ with $\sharp \pi_1(\pi_2^{-1}(A)\cap \mathscr{K})= \sharp A$ or 
$\sharp \pi_2(\pi_1^{-1}(A)\cap \mathscr{K})= \sharp A$ one reduces the problem to constructing two separate permutations on 
$A$ and $A^c$. Thus again the induction hypothesis gives separate permutations on $A$ and $A^c$ which together form a permutation $\sigma$ 
whose graph is contained in $\mathscr{K}$. 

One only needs to consider the case $\sharp \pi_1(\pi_2^{-1}(A)\cap \mathscr{K})= \sharp A$. The other case follows by exchanging the order. Further 
by renumbering one can assume that $A= \pi_1(\pi_2^{-1}(A)\cap \mathscr{K})$. Set $\mathscr{K}_A:= \mathscr{K}\cap  A\times A$,  
$\mathscr{K}_{A^c}:= \mathscr{K}\cap  A^c\times A^c$. The goal is to show that $\mathscr{K}_A$ and $\mathscr{K}_{A^c}$ satisfy the assumptions of the lemma. 

It is clear that 
$$\sharp \pi_{1}(\pi_{2}^{-1}(B)\cap \mathscr{K}_A)= \sharp \pi_{1}(\pi_{2}^{-1}(B)\cap \mathscr{K})\ge  \sharp B$$ 
for all $B\subseteq A$ since $\pi_1(\pi_2^{-1}(B)\cap \mathscr{K})\subseteq A$. 
If however there exists $C\subseteq A^c$ with $\sharp \pi_{1}(\pi_{2}^{-1}(C)\cap \mathscr{K}_{A^c})< \sharp C$ then 
$\sharp \pi_1(\pi_2^{-1}(A\cup C)\cap \mathscr{K})< \sharp (A\cup C)$ which contradicts the initial assumption.

Assume now that there exists a set $D\subseteq A$ with $\sharp \pi_{2}(\pi_{1}^{-1}(D)\cap \mathscr{K}_A)< \sharp D$. 
Set $E:= A\setminus \pi_{2}(\pi_{1}^{-1}(D)\cap \mathscr{K}_A)$. Then $D$ and $\pi_{1}(\pi_{2}^{-1}(E)\cap \mathscr{K}_A)$ are disjoint. 
This can be seen as follows. If $i\in \pi_{1}(\pi_{2}^{-1}(E)\cap \mathscr{K}_A)$ then there exists $j\in E$ such that $(i,j)\in \mathscr{K}_A$. 
If $i\in D$ then for all $(i,j)\in \mathscr{K}_A$ one has $j\in \pi_{2}(\pi_{1}^{-1}(D)\cap \mathscr{K}_A)$. Thus the sets are disjoint. It follows that
$$\sharp E=\sharp A-\sharp \pi_{2}(\pi_{1}^{-1}(D)\cap \mathscr{K}_A)>\sharp A-\sharp D\ge \sharp \pi_{1}(\pi_{2}^{-1}(E)\cap \mathscr{K}_A)$$
which clearly contradicts the first part  of the argument. Now the same argument applies to subsets of $A^c$.
\end{proof}

Assume first that there exists $n\in \N$ such that 
$$\mu=\frac{1}{n}\sum_{i=1}^n \delta_{x_i}\text{ and }\nu=\frac{1}{n}\sum_{j=1}^n \delta_{y_j}.$$
Identify $\{x_1,\ldots,x_n\}$ and $\{y_1,\ldots,y_n\}$ with $\{1,\ldots,n\}$. Define the set 
$$\mathscr{K}:=\{(i,j)|(x_i,y_j)\in \mathscr{J}\}\subseteq \{1,\ldots,n\}\times \{1,\ldots,n\}.$$
Denote by $\pi_{1}$ and $\pi_{2}$ the canonical projections from $\{1,\ldots,n\}\times \{1,\ldots,n\}$ onto the first and second factor, 
respectively. Since $\mu$ and $\nu$ are counting measures, the assumptions become 
\begin{equation*}
\sharp \pi_1(\pi_2^{-1}(A)\cap \mathscr{K})\ge \sharp A \text{ and } \sharp \pi_2(\pi_1^{-1}(A)\cap \mathscr{K})\ge \sharp A
\end{equation*}
for all $A\subseteq \{1,\ldots,n\}$.
Lemma \ref{L3} now gives a permutation $\sigma$ whose graph is contained in $\mathscr{K}$. Reversing the identifications one obtains a bijective map
$$\sigma'\colon \{x_1,\ldots ,x_n\}\to \{y_1,\ldots ,y_n\}$$ 
with $(x_i,\sigma'(x_i))\in \mathscr{J}$ for all $i$. Since $\mu$ and $\nu$ are counting measures $(\id,\sigma')_\sharp \mu$ is the desired coupling.

 The general case follows from this special case by an approximation argument.
Choose sequences of locally finite, disjoint and measurable coverings of $\supp \mu$ and $\supp \nu$, respectively.
Then one can approximate both measures in the weak-$*$ topology by finite measures whose support is contained in a given neighborhood of the 
supports of $\mu$ and $\nu$. Consider $\mathscr{J}_\e:=\overline{B_\varepsilon(\mathscr{J})}$, the closure of the $\e$-neighborhood of 
$\mathscr{J}$ with respect to the metric on $\mathcal{X}\times \mathcal{Y}$, for $\varepsilon >0$. Then every pair of 
finite measures $\mu'=\sum b_i\delta_{x_i}$ and $\nu'=\sum c_j \delta_{y_j}$ approximating $\mu$ and $\nu$ sufficiently well, satisfies 
the assumptions in Theorem \ref{P2}(2) for $\mathscr{J}_\e$ instead of $\mathscr{J}$.
In order to apply the special case it would suffice to have $b_i, c_j\in \Q$. Simply approximating the weights $b_i$ and $c_j$ by rational 
numbers and retaining the assumptions of Lemma \ref{L3} will in general only work if 
\begin{equation}\label{EQ1}
\nu'(\mathscr{J}_\e^+(A))>\mu'(A)\text{ and }\mu'(\mathscr{J}_\e^-(B))>\nu'(B)
\end{equation}
for all measurable $A\subset \mathcal{X}$ and $B\subset \mathcal{Y}$. With Lemma \ref{L2} one can split $\mu'$ and $\nu'$ into submeasures 
until \eqref{EQ1} is satisfied and proceed with the submeasures. Since $\mu'$ and $\nu'$ have finite supports this division process terminates after 
finitely many steps. For $\mu'$ or $\nu'$ supported in a single point it is obvious how to build a coupling in $\mathscr{J}_\e$.

If \eqref{EQ1} is satisfied the weights can be approximated by rational numbers such that \eqref{EQ1} still holds for the perturbed measures.
Then by the special case there exists a coupling supported in $\overline{B_\e(\mathscr{J})}$.
By construction the approximations of $\mu$ and $\nu$ form precompact sets in the weak-$*$ topology. This implies that the set of couplings
is precompact in the weak-$*$ topology as well, see \cite[Chapter 4]{villani}. The claim follows when passing to the limit using that
$\mathscr{J}\cap \supp \mu \times \supp \nu$ is closed.

\subsection{Dynamical Optimal Coupling}

For the splitting $\tau\colon M\to \R$ choose a smooth vector field $X_\tau$ on $M$ with $d\tau(X_\tau)\equiv 1$. Then $X_\tau$ is considered to be a vector field on 
$\R\times N$. Define a Lagrange function
$$L_\tau \colon \R\times TN\to \R\cup \{\infty\},\; L_\tau(t,v):=L(X_\tau(t,\pi_{TN}(v))+v).$$ 
Denote by $\mathcal{D}_\tau\subseteq \R\times TN$ the domain of $L_\tau$. $L_\tau$ is continuous on $\mathcal{D}_\tau$ and smooth 
on $\inte \mathcal{D}_\tau$, the interior of $\mathcal{D}_\tau$. Note that $L_\tau |_{\inte \mathcal{D}_\tau}<0$. For $(t,x)\in \R\times N$ set
$\mathcal{D}_{(t,x)}:=\mathcal{D}_\tau\cap (\{t\}\times TN_x)$. The point $(t,v)\in \mathcal{D}_{(t,x)}$ is identified with the vector
$X_\tau(t,x)+v\in \mathcal{C}$.

Denote with $\partial_v^2 L_\tau$ the {\it second fiber derivative of $L_\tau$}, i.e. 
$$(\partial_v^2L_\tau)_{(t,v)}(w,z):=\left.\frac{d^2}{drds}\right|_{r=s=0}L_\tau(t,v+rw+sz).$$

\begin{lemma}\label{strictconvexity}
\begin{itemize}
\item[(i)] $\mathcal{D}_{(t,x)}$ is a compact strictly convex domain with smooth boundary for all $(t,x)\in\R\times N$. 
\item[(ii)] For all $K\subseteq \R\times N$ compact there exists $\delta>0$ such that the second fiber derivative satisfies
$$(\partial_v^2 L_\tau)_{(t,v)}\ge \frac{\delta}{|L_\tau(t,v)|}\cdot \id$$ 
for all $(t,x)\in K$ and $v\in \inte\mathcal{D}_{(t,x)}$. 
\end{itemize}
\end{lemma}

\begin{proof}
(i) Denote with $\mathcal{C}_{(t,x)}^*$ the dual cone of $\mathcal{C}_{(t,x)}=\mathcal{C}_p$ via the identification $(t,x)\cong p$. Then $d\tau_{(t,x)} \in \inte \mathcal{C}_{(t,x)}^*$
since $\tau$ is a Lyapunov function for $\mathcal{C}$. This implies that $\mathcal{D}_{(t,x)}$ is compact since $X_\tau +v\in \mathcal{C}$ yields 
$$1=d\tau(X_\tau +v)\ge |v|-|X_\tau|$$
which bounds the norm of $v$. It is further smooth since 
$\partial \mathcal{C}_{(t,x)}$ is smooth away from the zero section and $\ker d\tau\cap \mathcal{C}=\{0\}$. Finally the strict 
convexity follows from the fact that at points in $\partial\mathcal{C}\setminus T^0M$ the bilinear form $\partial_v^2 \mathbb{L}|_{T\partial\mathcal{C}\times T\partial\mathcal{C}}$ 
is semidefinite with kernel equal to the radial direction, i.e. definite on any hyperplane transversal to the radial direction. Here the radial direction at $v\in TM$ is 
$\frac{d}{ds}|_{s=0}(1+s)v\in T(TM_p)_v\cong TM_x$ and $\partial_v^2 \mathbb{L}$ is defined analogous to $\partial_v^2L_\tau$.

(ii) Recall the formula for the second derivative of $L$ in the fiber direction
$$\partial^2_v L=\frac{1}{2\sqrt{\mathbb{L}}}\left(\frac{1}{2}\frac{\partial_v\mathbb{L}\otimes\partial_v\mathbb{L}}{\mathbb{L}}-\partial^2_v\mathbb{L}\right)$$
where $\partial_v\mathbb{L}_v(w):=\left.\frac{d}{ds}\right|_{s=0}\mathbb{L}(v+sw)$. As seen in (i) one has 
$$\partial^2_v \mathbb{L}|_{T\partial\mathcal{D}_{(t,x)}\times T\partial\mathcal{D}_{(t,x)}}<0$$
for all $(t,x)\in \R\times N$. Thus one can choose $n<\infty$ and $\delta_1 >0$ such that 
$$\left.\left(\frac{n}{2}\partial_v\mathbb{L}\otimes\partial_v\mathbb{L}-\partial^2_v \mathbb{L}\right)\right|_{T\mathcal{D}_\tau\times T\mathcal{D}_\tau}>\delta_1\cdot \id$$
on a neighborhood $U$ of $\partial\mathcal{D}_\tau$ in $\R\times TN$ over $K$. This implies the claim on the smaller neighborhood 
$U\cap\{\mathbb{L}<1/n\}$.

For the remaining points outside of $U\cap\{\mathbb{L}<1/n\}$ note that 
$$\frac{1}{2}\frac{\partial_v\mathbb{L}\otimes\partial_v\mathbb{L}}{\mathbb{L}}-\partial^2_v\mathbb{L}\ge 0$$
with kernel equal to the radial direction. Thus one has 
$$\frac{1}{2}\frac{\partial_v\mathbb{L}\otimes\partial_v\mathbb{L}}{\mathbb{L}}-\partial^2_v\mathbb{L}> \delta_2\cdot \id$$
on $\mathcal{D}_\tau\setminus U\cap\{\mathbb{L}<1/n\}$ over $K$ for a $\delta_2>0$.
\end{proof}

Let $V\subset N$ be open with a chart $V\to \R^{m-1}$ of $N$. The induced trivialization of $TV\to T\R^{m-1}\cong \R^{m-1}\times \R^{m-1}$ gives local coordinates 
$(x,v)\in \R^{m-1}\times \R^{m-1}$ on $TN$. The Euler-Lagrange equation of the action functional associated to $L_\tau$ reads in these coordinates:
\begin{equation}\label{ELE1}
\frac{\partial L_\tau}{\partial t}+\frac{\partial L_\tau}{\partial x}-\frac{d}{dt}\left(\frac{\partial L_\tau}{\partial v}\right)=0
\end{equation}
The equation defines an explicit ordinary differential equation of second order since $\frac{\partial^2 L_\tau}{\partial v^2}>0$ at points in $\inte \mathcal{D}_\tau$.
It is standard that the solution to \eqref{ELE1} are of the form $t\mapsto \dot{\eta}(t)$ for some curve $\eta\colon I\to N$, i.e. the solutions are tangent curves in $TN$. 
For $(t,v)\in \inte \mathcal{D}_\tau$ denote with $\eta_{(t,v)}\colon I\to N$ the unique maximal solution to \eqref{ELE1} with $\dot\eta_{(t,v)}(0)=v$.
The solutions define a local flow 
$$\Phi_\tau\colon U_\tau \to \inte\mathcal{D}_\tau,\; (s,(t,v))\mapsto (s+t,\dot{\eta}_{(t,v)}(s))$$ 
where $U_\tau\subset \R\times \inte \mathcal{D}_\tau$ is is an open neighborhood of $\{0\}\times \inte \mathcal{D}_\tau$.

\begin{prop}\label{flowtau}
$\Phi_\tau$ extends to a smooth local flow on an open neighborhood of $\{0\}\times \mathcal{D}_\tau$, i.e. there 
exists an open neighborhood $U$ of $\{0\}\times \mathcal{D}_\tau$ in $\R\times \R\times TN$ and $\tilde\Phi_\tau\colon U\to \R\times TN$ smooth with 
$\tilde\Phi_\tau \equiv \Phi_\tau$ on $U_\tau$. Furthermore $\tilde\Phi_\tau$ is complete on $\mathcal{D}_\tau$ with $\inte\mathcal{D}_\tau$ and $\partial\mathcal{D}_\tau$ 
$\tilde\Phi_\tau$-invariant. The extension of $\Phi_\tau$ to $\mathcal{D}_\tau$ is unique and will be denoted by $\Phi_\tau$ again.
\end{prop}

For a local trivialization $W\times \R^m$ of $TM$ with coordinates $(p,w)\in W\times \R^m$ the Euler-Lagrange equation of the action functional associated to $\mathbb{L}$
\begin{equation}\label{ELE2}
\frac{d}{dt}\left(\frac{\partial\mathbb{L}}{\partial w}\right)-\frac{\partial\mathbb{L}}{\partial p}=0
\end{equation}
defines a local flow outside the zero section since $\frac{\partial^2\mathbb{L}}{\partial w^2}$ is nondegenerate. For $w\in TM\setminus T^0M$ let 
$\gamma_w\colon J\to M$ be the unique maximal solution to \eqref{ELE2} with $\dot\gamma_w(0)=w$. Denote with 
$\Phi^\mathbb{L}\colon\mathbb{U}\subset \R\times TM\to TM,\; (t,w)\mapsto \dot\gamma_w(t)$ the maximal flow defined by \eqref{ELE2} extended to the zero 
section by constant flow lines, see \cite{minguzzi142}. $\Phi^\mathbb{L}$ is smooth outside the zero section.

A causal curve $\eta \colon I\to N$ is a {\it $\Phi_\tau$-trajectory} if $t\mapsto (t,\dot{\eta}(t))$ solves \eqref{ELE1}. A curve $\gamma\colon J\to M$ is a 
{\it $\Phi^\mathbb{L}$-trajectory} if $t\mapsto \dot\gamma(t)$ solves \eqref{ELE2}.

\begin{lemma}\label{phitau}
A curve $\eta\colon I\to N$ with $(t,\dot\eta(t))\in \inte \mathcal{D}_\tau$ for all $t\in I$ is a $\Phi_\tau$-trajectory if and only if its graph $H\colon t\mapsto (t,\eta(t))$ 
is a reparameterization of a $\Phi^\mathbb{L}$-trajectory $\gamma$ with $\dot\gamma\in\inte\mathcal{C}$. Especially the trajectories of $\Phi_\tau$ 
and $\Phi^\mathbb{L}$ are in one-to-one correspondence via reparameterization.
\end{lemma}

\begin{proof}
Fix a local chart of $W \to \R^m$ of $M$ and the induced trivialization of $TW\to T\R^m \cong \R^m\times \R^m$. Denote by 
$(p,w)\in \R^m\times \R^m$. Expanding the Euler-Lagrange equation of $L$ on $\inte \mathcal{C}$ gives
\begin{equation}\label{eqEL}
0= \frac{d}{dt}\left(\frac{\partial L}{\partial w}\right)-\frac{\partial L}{\partial p}=\frac{d}{dt}\left(\frac{1}{2\sqrt{\mathbb{L}}}\right)\frac{\partial \mathbb{L}}
{\partial w}+\frac{1}{2\sqrt{\mathbb{L}}}\left[\frac{d}{dt}\left(\frac{\partial \mathbb{L}}{\partial w}\right) -\frac{\partial \mathbb{L}}{\partial p}\right].
\end{equation}
Since $\mathbb{L}$ is autonomous, $\mathbb{L}$ is preserved along orbits of the local Euler-Lagrange flow $\Phi^\mathbb{L}$ of $\mathbb{L}$. 
This immediately show that $\inte\mathcal{C}$ and $\partial\mathcal{C}\setminus T^0M$ are invariant under $\Phi^\mathbb{L}$. Thus according 
to \eqref{eqEL} an orbit of $\Phi^\mathbb{L}$ in $\inte\mathcal{C}$ solves the Euler-Lagrange equation of $L$. Conversely let $\theta\colon I\to M$ 
solve the Euler-Lagrange equations of $L$. Reparameterizing $\theta$ to a curve $\gamma$ such that $\mathbb{L}$ is constant along $\dot\gamma$ yields 
an orbit of $\Phi^\mathbb{L}$. 

Now one shows that $\eta$ is a $\Phi_\tau$-trajectory if and only if $H$ solves the Euler-Lagrange equation of $L$. 
Let $\eta\colon I\to N$ be a $\Phi_\tau$-trajectory. Consider a smooth variation $\mathcal{H}\colon I\times (-\e,\e)\to \R\times N$ of 
$H$ with fixed endpoints. Since $\mathcal{H}$ is smooth one can assume, by diminishing $\e$ if necessary, that $\partial_t(\tau
\circ\mathcal{H})>0$ everywhere. Thus one can smoothly reparameterize $\mathcal{H}$ to satisfy $\partial_t(\tau
\circ\mathcal{H})=1$, i.e. $\mathcal{H}$ consists of graphs of curves $\eta_s\colon I\to N$ ($s\in (-\e,\e)$).
This shows that any sufficiently small variation of $H$ can be reparameterized to be a variation by graphs. 
The reparameterization does not affect the value of $\mathcal{A}$ on the variation. Note that 
$$\int L_\tau(t,\dot{\eta}_s(t))dt=\int L(\partial_t \mathcal{H}(t,s))dt.$$
Now if the first variation of $\eta$ vanishes the first variation of the graph vanishes as well, i.e. $H$ solves the Euler-Lagrange equations of $L$. 
The converse is obvious, i.e if $H$ solves the Euler-Lagrange equations of $L$, then the first variation of $\eta$ vanishes.

Combining both paragraphs gives the first claim. For the second claim one has to use the positive homogeneity of $\Phi^\mathbb{L}$, i.e. 
$\Phi^\mathbb{L}(\lambda t,w)= \Phi^\mathbb{L}(t,\lambda w)$ for $\lambda>0$. Thus reparameterizing a $\Phi^\mathbb{L}$-trajectory to a curve $\gamma$
with $d\tau(\dot\gamma)\equiv 1$ gives the same curves for initial values $w$ and $\lambda w$ where $\lambda>0$. It remains to note that half lines in 
$\inte\mathcal{C}$ are in one-to-one correspondence with points in $\mathcal{D}_\tau$.
\end{proof}

Recall that $\pi_{TM}\colon TM\to M$ denotes the canonical projection.

\begin{proof}[Proof of Proposition \ref{flowtau}]
Since $\Phi^\mathbb{L}$ is a smooth local flow on $TM\setminus T^0M$ every $v\in \partial \mathcal{C}\setminus T^0M$ admits an $\e(v)>0$ 
and a neighborhood $W$ in $TM\setminus T^0M$ such that $d\tau(\Phi^\mathbb{L}(t,w))>0$ for all $|t|\le \e(v)$ and $w\in W$.

Parameterize the trajectories $t\mapsto 
\pi_{TM}(\Phi^\mathbb{L}(t,w))$ to curves $\gamma_w$ such that 
\begin{equation}\label{eqRPA}
d\tau(\dot\gamma_w)\equiv 1\text{ and }\dot\gamma_w(0)=\frac{w}{d\tau(w)}.
\end{equation}
Since $\Phi^\mathbb{L}(t,\lambda v)=\Phi^\mathbb{L}(\lambda t, v)$ for $\lambda >0$ the curves $\gamma_w$ and $\gamma_{\lambda w}$ coincide for positive $\lambda$. 
The tangent curves $t\mapsto \dot\gamma_w(t)$ define a local flow. This is due to the fact that 
$$\dot\gamma_w(s+t)=\dot\gamma_{\dot\gamma_w(s)}(t)$$
for $|s|,|t|$ sufficiently small. Now since $\Phi^\mathbb{L}$ is autonomous these local definitions of the extensions patch together to give a local flow on a 
neighborhood of $\mathcal{C}\cap\{d\tau=1\}$ in $\{d\tau=1\}$ which preserves $\inte\mathcal{C}\cap\{d\tau=1\}$ and $\partial\mathcal{C}\cap\{d\tau=1\}$. 
Projecting the flow to $\R\times TN$ gives a smooth extension since the projection coincides with $\Phi_\tau$ on $\inte\mathcal{D}_\tau$ by Lemma 
\ref{phitau}. 

It remains to prove the completeness of the extension. But this follows directly from Remark \ref{remarkinextendable} since the $\gamma_w$ are causal for 
$w\in\mathcal{C}$ and $\tau(\gamma_w(t))-\tau(\gamma_w(s))=t-s$.
\end{proof}

The Riemannian metric $h$ induces a Riemannian metric on all higher tangent bundles 
$T^{(k)}M$ where $T^{(k)} M:= T(T^{(k-1)}M)$ and $T^{(1)} M:= TM$. For $a\le b\in \R$ define the $C^k$-topology 
on smooth curves $\gamma\colon [a,b]\to M$ via the induced metrics as
$$\dist\nolimits_k (\gamma,\eta):=\sup\{\dist (\gamma^{(k)}(t),\eta^{(k)}(t))|\;
t\in[a,b] \}.$$

\begin{lemma}\label{L3.6}
For all $(p,q)\in J^+$ the set $\Gamma_{p\to q}$ is nonempty, consists of smooth curves and
is compact in the $C^k$-topology for all $k$.
\end{lemma}

\begin{proof}
By Proposition \ref{P0} there exists an $\mathcal{A}$-minimizer between $p$ and $q$ and every 
$\mathcal{A}$-minimizer solves the Euler-Lagrange equations of $\mathbb{L}$ up to monotone reparameterization. 
Monotonously reparameterizing an $\mathcal{A}$-minimizer between $p$ and $q$ to $\gamma\colon [0,1]\to M$ with $d\tau(\dot\gamma)\equiv 
\tau(q)-\tau(p)$ yields $\gamma\in \Gamma_{p\to q}$, i.e. $\Gamma_{p\to q}$ is nonempty. Every curve in $\Gamma_{p\to q}$ solves the
Euler-Lagrange equations of $L$. 

$\Gamma_{p\to q}$ contains only the constant curve if $p=q$, i.e. in this case $\Gamma_{p\to q}$ is compact in every topology. 
If on the other hand one has $p\neq q$, an $\mathcal{A}$-minimizer $\gamma\in \Gamma_{p\to q}$ induces a 
$\Phi_\tau$-trajectory $\eta\colon [\tau(p),\tau(q)]\to N$ via the graph 
$$H(t)=(t,\eta(t)):=\gamma\left(\frac{t-\tau(p)}{\tau(q)-\tau(p)}\right)$$ 
of $\eta$ by Lemma \ref{phitau}. Identify $p\cong (\tau(p),x)$ and $q\cong (\tau(q),y)$ via the splitting $M\cong \R\times N$. 
The set of $\Phi_\tau$-trajectories $\delta\colon [\tau(p),\tau(q)]\to N$ between $x$ and $y$ is compact in all $C^k$-topologies
on $C^\infty([\tau(p),\tau(q)],N)$ by Proposition \ref{flowtau} since it is part of a smooth flow. Now the compactness of $\Gamma_{p\to q}$ is obvious.
\end{proof}

The following results are analogous to results in \cite[chapter 7]{villani}. 

\begin{prop}
There exists a Borel map $S\colon J^+ \to C^0([0,1],M)$ such that $S(p,q)\in \Gamma_{p\to q}$. 
\end{prop}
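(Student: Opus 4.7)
The plan is to realize $\Gamma_{x\to y}$ as the values of a multifunction $F\colon J^+\to 2^{\mathcal{X}}$, $F(x,y):=\Gamma_{x\to y}$, where $\mathcal{X}:=C^0([0,1],M)$ is equipped with the uniform metric induced by $h$. Since $h$ is complete, $\mathcal{X}$ is Polish, so I would apply the Kuratowski--Ryll-Nardzewski selection theorem after verifying that $F$ is nonempty-valued, has closed graph, and is locally relatively compact (hence upper semicontinuous and weakly Borel measurable).

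First I would check nonemptiness and compactness of each $F(x,y)$. For $(x,y)\in J^+$ the proposition on minimizers gives some $\tilde\gamma$ realizing $c_L(x,y)$; monotone reparametrization preserves $\mathcal{A}$ by the degree-one homogeneity of $L$, so one can arrange $d\tau(\dot\gamma)\equiv\tau(y)-\tau(x)$, producing a point of $\Gamma_{x\to y}$. Every such $\gamma$ has image in the compact causal diamond $J^+(x)\cap J^-(y)$. On any compact subset of $M$, the relation $-d\tau\le L$ combined with positive homogeneity and the strict convexity of the cones gives a comparison $|v|_h\le C\, d\tau(v)$ for all causal $v$; since $d\tau(\dot\gamma)$ is constant, the curves in $\Gamma_{x\to y}$ are uniformly Lipschitz for $h$, and Arzelà--Ascoli gives precompactness in $\mathcal{X}$. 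Lower semicontinuity of $\mathcal{A}$ and the fact that uniform limits of causal curves with affine $\tau$-parametrization are themselves such curves give closedness, hence compactness. The same estimate localised to a small neighbourhood $V$ of any point of $J^+$ shows that $F(\overline V)$ is relatively compact in $\mathcal{X}$, which is the property needed for upper semicontinuity later.

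Next I would check that the graph of $F$ is closed in $J^+\times\mathcal{X}$. Let $(x_n,y_n,\gamma_n)\to(x,y,\gamma)$ with $\gamma_n\in F(x_n,y_n)$. The endpoint equalities, causality of $\gamma$ and constancy of $d\tau(\dot\gamma)$ pass to the limit. Lower semicontinuity of $\mathcal{A}$ yields $\mathcal{A}(\gamma)\le\liminf_n\mathcal{A}(\gamma_n)=\liminf_n c_L(x_n,y_n)$; on the other hand, continuity of $c_L$ on $J^+$ for globally hyperbolic Lorentz--Finsler manifolds (proved in the same way as for Lorentz metrics, using the compactness of causal diamonds) gives $\limsup_n c_L(x_n,y_n)\le c_L(x,y)$. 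Combined with $\mathcal{A}(\gamma)\ge c_L(x,y)$ by definition, this forces $\gamma\in F(x,y)$.

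Finally, closed graph together with the local precompactness established above yields upper semicontinuity of $F$. For an open $U\subseteq\mathcal{X}$, writing $U=\bigcup_n C_n$ as a countable union of closed sets, each $\{(x,y)\colon F(x,y)\cap C_n\ne\emptyset\}$ is closed by upper semicontinuity with compact values, so $\{(x,y)\colon F(x,y)\cap U\ne\emptyset\}$ is $F_\sigma$ and $F$ is weakly Borel measurable. Kuratowski--Ryll-Nardzewski then produces a Borel selection $S\colon J^+\to\mathcal{X}$ with $S(x,y)\in\Gamma_{x\to y}$. The main technical point to be careful about is the continuity of $c_L$ on $J^+$ in the Lorentz--Finsler setting (Step 3); all other ingredients are routine consequences of global hyperbolicity plus selection theory.
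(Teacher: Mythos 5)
Your proposal is correct and follows essentially the same route as the paper: both reduce the statement to an application of the Kuratowski--Ryll-Nardzewski measurable selection theorem (the paper invokes it through the Aliprantis--Border formulation for correspondences, following Villani's Proposition 7.16(vi)). The paper reaches weak measurability more briefly, by observing that $\ev_0\times\ev_1$ is Lipschitz so the preimage correspondence $(\ev_0\times\ev_1)^{-1}\colon J^+\twoheadrightarrow\Gamma$ is weakly measurable with nonempty closed values; you instead verify closed graph plus local precompactness and deduce upper semicontinuity, which is a more explicit but equivalent path to the same measurability hypothesis.
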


\begin{proof}
For every $(p,q)\in J^+$ the set $\Gamma_{p\rightarrow q}$ is nonempty and compact in every $C^k$-topology by the Lemma \ref{L3.6}, 
i.e. nonempty and closed. Further the evaluation map $\ev_0\times \ev_1$
is Lipschitz. This implies that the correspondence (for the definition see \cite[page 4]{alibor})
$$(\ev\nolimits_0\times \ev\nolimits_1)^{-1}\colon J^+\twoheadrightarrow  \Gamma$$
is weakly measurable in the sense of \cite[Definition 18.1]{alibor}. Now \cite[Theorem 8.13]{alibor} 
implies that $(\ev_0\times \ev_1)^{-1}$ has a measurable selection $S$, i.e. $(\ev_0\times \ev_1)
\circ S\equiv \id|_{J^+}$.
\end{proof}

\begin{proof}[Proof of Proposition \ref{dynoptcou}]
Let $(\mu,\nu)\in \mathcal{P}^+_\tau(M)$ and let $\pi$ be an optimal coupling of $\mu$ and $\nu$ for the cost $c_L$. Consider 
$\Pi:=S_\sharp \pi$. Since $(\ev_0,\ev_1)\circ S\equiv \id$, the claim follows from the definition of 
optimal dynamical couplings. 
\end{proof}

\begin{cor}\label{intermcoup}
Let $\Pi$ be a dynamical optimal coupling between $J^+$-related measures $\mu_0$ and $\mu_1$ and $\sigma_{1},\sigma_2\colon \Gamma\to [0,1]$ measurable
functions with $\sigma_1\le \sigma_2$. Then the restriction 
$$\pi_{\sigma_1,\sigma_2}:=(\EV\circ (\id\times \sigma_1),\EV\circ (\id\times \sigma_2))_\sharp\Pi$$ 
is an optimal coupling of $\mu_{\sigma_1}:= (\EV\circ (\id\times \sigma_1))_\sharp\Pi$ and $\mu_{\sigma_2}:= (\EV\circ 
(\id\times \sigma_2))_\sharp\Pi$. If furthermore $(\sigma_1,\sigma_2)\neq(0,1)$ $\Pi$-almost everywhere then $\pi_{\sigma_1,\sigma_2}$ is
the unique optimal coupling of $\mu_{\sigma_1}$ and $\mu_{\sigma_2}$. 
\end{cor}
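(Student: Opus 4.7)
The strategy is a Villani-type gluing argument (cf.\ Theorem~7.21 of \cite{villani}) adapted to the Lorentz--Finsler setting. The preliminary observation is an action decomposition: since $\supp\Pi\subseteq\Gamma$, $\Pi$-a.e.\ $\gamma$ is an $\mathcal{A}$-minimizer with $d\tau(\dot\gamma)$ constant, so every subsegment $\gamma|_{[a,b]}$ is itself a minimizer, giving $\mathcal{A}(\gamma|_{[a,b]})=c_L(\gamma(a),\gamma(b))$. Splitting each $\gamma$ at $0\le\sigma_1(\gamma)\le\sigma_2(\gamma)\le 1$ and integrating against $\Pi$ yields
\[
\int c_L\, d\pi \;=\; \int c_L\, d\pi_{0,\sigma_1} + \int c_L\, d\pi_{\sigma_1,\sigma_2} + \int c_L\, d\pi_{\sigma_2,1},
\]
where $\pi:=(\ev_0,\ev_1)_\sharp\Pi$ is optimal by dynamic optimality of $\Pi$, and $\pi_{a,b}$ is defined by analogy with $\pi_{\sigma_1,\sigma_2}$ (with $\ev_0$ or $\ev_1$ replacing $\EV\circ(\id\times\sigma_i)$ when $a=0$ or $b=1$).

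For optimality of $\pi_{\sigma_1,\sigma_2}$, suppose for contradiction that some coupling $\pi'$ of $\mu_{\sigma_1}$ and $\mu_{\sigma_2}$ satisfies $\int c_L\, d\pi'<\int c_L\, d\pi_{\sigma_1,\sigma_2}$. The gluing lemma (Chapter~1 of \cite{villani}) produces $Q\in\mathcal{P}(M^4)$ whose three consecutive two-factor marginals are $\pi_{0,\sigma_1}$, $\pi'$, and $\pi_{\sigma_2,1}$. The $(1,4)$-marginal $\tilde\pi$ couples $\mu_0$ with $\mu_1$, and $Q$-a.s.\ the inclusions $x_{\sigma_1}\in J^+(x_0)$, $x_{\sigma_2}\in J^+(x_{\sigma_1})$, $x_1\in J^+(x_{\sigma_2})$ hold, so concatenation of causal minimizers gives the subadditivity
\[
c_L(x_0,x_1)\le c_L(x_0,x_{\sigma_1})+c_L(x_{\sigma_1},x_{\sigma_2})+c_L(x_{\sigma_2},x_1).
\]
Integrating yields $\int c_L\, d\tilde\pi<\int c_L\, d\pi$, contradicting optimality of $\pi$.

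For uniqueness under $(\sigma_1,\sigma_2)\neq(0,1)$ $\Pi$-a.e., the same construction applied to any competing optimal $\pi'$ produces $\tilde\pi$ with cost equal to that of $\pi$, so the subadditivity must be $Q$-a.e.\ an equality. This forces the three concatenated minimizers to fuse into a single minimizer from $x_0$ to $x_1$, so that $x_{\sigma_2}$ is obtained from the initial segment $x_0\to x_{\sigma_1}$ by Euler--Lagrange continuation to parameter $\sigma_2$ (and symmetrically $x_{\sigma_1}$ from the terminal segment $x_{\sigma_2}\to x_1$). The hypothesis guarantees that for $\Pi$-a.e.\ $\gamma$ one of these outer segments is nondegenerate, and the same continuation map is the one induced by $\Pi$; cyclic monotonicity (Proposition~\ref{propcycmon}) then identifies $\pi'$ with $\pi_{\sigma_1,\sigma_2}$. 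The delicate step is the rigorous treatment of this Euler--Lagrange extension — in particular the measurability of the continuation map and the non-uniqueness of minimizers at conjugate/cut points — which I expect to handle by working with the disintegration of $\Pi$ along $(\ev_0,\EV\circ(\id\times\sigma_1))$, where $\Pi$ has already selected a distinguished minimizer on its support.
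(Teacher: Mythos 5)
Your argument for the optimality of $\pi_{\sigma_1,\sigma_2}$ is correct and follows essentially the same route as the paper: the paper invokes the triangle inequality $C_L(\mu_0,\mu_1)\le C_L(\mu_0,\mu_{\sigma_1})+C_L(\mu_{\sigma_1},\mu_{\sigma_2})+C_L(\mu_{\sigma_2},\mu_1)$ together with the action decomposition $\int c_L\,d\pi_{0,1}=\sum\int c_L\,d\pi_{\cdot,\cdot}$, and the gluing-lemma competitor you build is precisely the standard proof of that triangle inequality. The paper then dismisses the uniqueness part with the single sentence ``follows directly from the triangle inequality,'' so it offers no more detail than you do; but the sketch you give in its place has a genuine gap which is more fundamental than the measurability and cut-locus issues you flag at the end.

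The gap is in the step ``$x_{\sigma_2}$ is obtained from the initial segment $x_0\to x_{\sigma_1}$ by Euler--Lagrange continuation to parameter $\sigma_2$.'' After gluing, $Q$ is a measure on $M^4$; the functions $\sigma_1,\sigma_2$ were defined on $\Gamma$, not on $M^4$, so there is no ``parameter $\sigma_2$'' attached to a $Q$-generic $4$-tuple. Equality in the subadditivity does force $x_{\sigma_1},x_{\sigma_2}$ to lie, in order, on a minimizer from $x_0$ to $x_1$, and a nondegenerate segment $x_0\to x_{\sigma_1}$ does determine the full unparametrized minimizer by the Euler--Lagrange ODE; but this only constrains $x_{\sigma_2}$ to a one-parameter family (the arc between $x_{\sigma_1}$ and $x_1$), it does not single out a point. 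Moreover, the appeal to Proposition~\ref{propcycmon} does not close this: cyclic monotonicity constrains the \emph{support} of an optimal plan, and distinct optimal plans can share the same support, so it cannot by itself identify $\pi'$ with $\pi_{\sigma_1,\sigma_2}$. What is actually needed is an injectivity/non-branching statement at intermediate times --- that for $Q$-a.e.\ $4$-tuple, $x_{\sigma_2}$ is a measurable function of $(x_0,x_{\sigma_1})$ compatible with the marginal $\mu_{\sigma_2}$, which is the content of Villani's Corollary~7.23 and requires using both outer segments and the hypothesis $(\sigma_1,\sigma_2)\neq(0,1)$ in an essential way. Your closing idea of disintegrating $\Pi$ along $(\ev_0,\EV\circ(\id\times\sigma_1))$ is a reasonable place to start, but as written it addresses the selection of a minimizer on $\supp\Pi$, not the selection of $x_{\sigma_2}$ along the continued curve, which is the real obstruction.
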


\begin{proof}
By the triangle inequality for $c_L$ and the parameterization invariance of $\mathcal{A}$ one has
$$C_L(\mu_0,\mu_1)\le C_L(\mu_0,\mu_{\sigma_1})+C_L(\mu_{\sigma_1},\mu_{\sigma_2})+C_L(\mu_{\sigma_2},\mu_1)$$
and
$$\int c_L d\pi_{0,1}=\int c_L d\pi_{0,\sigma_1}+\int c_L d\pi_{\sigma_1,\sigma_2}
+\int c_L d\pi_{\sigma_2,1}.$$
Since $\int c_L d\pi_{0,1}=C_L(\mu_0,\mu_1)$ and $C_L$ is the minimal cost, the three terms on the 
right hand sides must individually coincide. 
More precisely, if one coupling on the right hand side, without loss of generality $\pi_{0,\sigma_1}$, 
is not optimal one can replace this coupling by an optimal coupling $\pi'$ with strictly smaller cost. 
Let $\pi'$ be an optimal coupling of $(\ev_0)_\sharp \Pi$ and $(\ev\circ (\id\times \sigma_1))_\sharp 
\Pi$.  Then one has 
$$\int c_L d\pi_{0,1}>\int c_L d\pi'+\int c_L d\pi_{\sigma_1,\sigma_2}+\int c_L d\pi_{\sigma_2,1}.$$
Gluing the three couplings gives a coupling of $\mu_0$ and $\mu_1$ with strictly smaller cost, a
contradiction.
The second statement follows directly from the triangle inequality for $c_L$, see section 
\ref{results}.
\end{proof}

\begin{cor}\label{cor_interpolation}
Let $(\mu_{0},\mu_1)\in \mathcal{P}_\tau^+(M)$. Further let $\Pi$ be a dynamical optimal coupling of $\mu_0$ and $\mu_1$.
If $\Xi$ is a measure on $\Gamma$, such that $\Xi\le \Pi$ and $\Xi(\Gamma)>0$, set 
$$\Xi':=\frac{\Xi}{\Xi(\Gamma)} \text{ and } \nu_{i}:=(\ev\nolimits_{i})_\sharp\Xi'$$
for $i=0,1$. Then $\Xi'$ is a dynamical optimal coupling between $\nu_{0}$ and $\nu_{1}$.
\end{cor}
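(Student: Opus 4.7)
The plan is to argue by the classical restriction (or cut-and-paste) principle for optimal transport. Set $\lambda:=\Xi(\Gamma)\in (0,1]$; if $\lambda=1$ then $\Xi=\Pi$ and there is nothing to prove, so assume $\lambda<1$. Decompose $\Pi=\Xi+(\Pi-\Xi)$ as a sum of nonnegative measures on $\Gamma$ and push forward via $(\ev_0,\ev_1)$ to get
\[
\pi_{0,1}:=(\ev\nolimits_0,\ev\nolimits_1)_\sharp\Pi \;=\; \lambda\pi' + \pi'',
\]
where $\pi':=(\ev_0,\ev_1)_\sharp \Xi'$ is a coupling of $(\nu_0,\nu_1)$ and $\pi'':=(\ev_0,\ev_1)_\sharp(\Pi-\Xi)$ is a nonnegative measure of mass $1-\lambda$ whose marginals are $\mu_i-\lambda\nu_i$ for $i=0,1$. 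In particular $\nu_0$ and $\nu_1$ are $J^+$-related since $\Xi$ is concentrated on $\Gamma\subseteq (\ev_0,\ev_1)^{-1}(J^+)$.

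Next I suppose for contradiction that $\Xi'$ is not a dynamical optimal coupling. By definition this means that $\pi'$ is not an optimal coupling between $\nu_0$ and $\nu_1$, so there exists a coupling $\tilde\pi\in\Pi(\nu_0,\nu_1)$ with
\[
\int c_L\,d\tilde\pi \;<\; \int c_L\,d\pi'.
\]
The finite separation assumption together with $\tau\in L^1(\mu_i)$ for some splitting ensures $\tau\in L^1(\nu_i)$ as well (since $\nu_i\le\lambda^{-1}\mu_i$), so by Proposition \ref{P1} applied to $(\nu_0,\nu_1)$ both $\int c_L\,d\tilde\pi$ and $\int c_L\,d\pi'$ are well defined in $\R\cup\{\infty\}$, and the optimality of $\pi_{0,1}$ forces $\int c_L\,d\pi'<\infty$ (via $c_L\ge -(d\tau)$ and additivity of the cost over the decomposition $\pi_{0,1}=\lambda\pi'+\pi''$).

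Finally I glue: define $\hat\pi:=\lambda\tilde\pi+\pi''$. Its first marginal is $\lambda\nu_0+(\mu_0-\lambda\nu_0)=\mu_0$ and its second marginal is $\mu_1$, so $\hat\pi\in\Pi(\mu_0,\mu_1)$, with total cost
\[
\int c_L\,d\hat\pi \;=\; \lambda\!\int c_L\,d\tilde\pi + \int c_L\,d\pi'' \;<\; \lambda\!\int c_L\,d\pi' + \int c_L\,d\pi'' \;=\; \int c_L\,d\pi_{0,1} \;=\; C_L(\mu_0,\mu_1),
\]
contradicting the optimality of $\pi_{0,1}$. The main obstacle I anticipate is the bookkeeping around the integrability of $c_L$: the strict inequality of total costs only transfers cleanly once one knows that $\int c_L\,d\pi'$ and $\int c_L\,d\pi''$ are both finite, which in turn rests on using the splitting bound $-d\tau\le L$ to turn $c_L$ into a quantity bounded below by a $\pi$-integrable function for every coupling $\pi$ of the $\tau$-integrable marginals arising from $\mu_0,\mu_1$ and their restrictions.
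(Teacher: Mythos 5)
Your argument is correct and is precisely the restriction (cut-and-paste) argument behind Theorem 7.30 in Villani, which the paper cites for this corollary without writing out a proof. The decomposition $\pi_{0,1}=\lambda\pi'+\pi''$ via $\Pi=\Xi+(\Pi-\Xi)$, the competitor $\hat\pi=\lambda\tilde\pi+\pi''$, the marginal computations, and the integrability bookkeeping using $c_L(x,y)\ge\tau(x)-\tau(y)$ together with $\nu_i\le\lambda^{-1}\mu_i$ are exactly what is required.
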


\begin{proof}
The assumption $\Xi\le \Pi$ implies that $\Pi-\Xi$ is a measure on $\Gamma$. Set $\pi:=
(\ev_0,\ev_1)_\sharp \Pi$, $\pi^0:=(\ev_0,\ev_1)_\sharp \Xi$ and $\pi^1:=(\ev_0,\ev_1)_\sharp
(\Pi-\Xi)$. Then one has $\pi =\pi^0+\pi^1$ and 
\begin{equation}\label{E11}
\int c_L d\pi=\int c_L d\pi^0+\int c_L d\pi^1.
\end{equation}
Since the left hand side of \eqref{E11} is optimal so must be the terms on the right hand side. Here optimality of $\pi^0$ 
means optimality of the cost $\int c_Ld\pi'$ among all Borel measures $\pi'$ on $M\times M$ with $\pi'(M\times M)
=\pi^0(M\times M)=\Xi(\Gamma)$ and martingales equal to $\Xi(\Gamma)\nu_0$ and $\Xi(\Gamma)\nu_1$. Optimality of $\pi^1$ 
is defined analogously. Now if $\pi^0$ was not optimal one could replace $\pi^0$ by a coupling $\sigma$ of 
$(\ev_0)_\sharp \Xi$ and $(\ev_1)_\sharp \Xi$ with strictly smaller cost. $\sigma+\pi^1$ is a coupling of $\mu_0$ and $\mu_1$ by 
construction, but $\int c_L d\pi >\int c_L d\sigma +\int c_L d\pi^1$, a contradiction. This shows $\Xi'$ to be a dynamical optimal coupling.
\end{proof}

\subsection{Intermediate regularity of dynamical optimal couplings}\label{intermregu}

Recall that $\mathbb{U}\subseteq \R\times TM$ denotes the maximal domain of the Euler-Lagrange flow $\Phi^\mathbb{L}$ of $\mathbb{L}$.
Let $U$ be defined by $\{1\}\times U:=(\{1\}\times TM)\cap \mathbb{U}$. $U$ is a fiberwise star shaped neighborhood of the zero section. 
Define the exponential map $\exp^\mathbb{L}\colon U\to M\times M$ by 
$$\exp^\mathbb{L}(v):= (\pi_{TM}(v),\pi_{TM}\circ \Phi^\mathbb{L}(1,v))$$ 
 
 \begin{prop}\label{Localdiff}
$\exp^\mathbb{L}$ is a $C^1$-diffeomorphism on a neighborhood of $T^0M$ in $TM$ onto its image. Further $\exp^\mathbb{L}$ is smooth 
outside $T^0M$. 
\end{prop}
 
\begin{proof}
Choose local coordinates $V\to \R^m$ on $M$ and consider the induced coordinates $TV\to T\R^m\cong \R^m\times \R^m$ on $TM$. The chart 
$V\to \R^m$ induces coordinates $V\times V\to \R^m\times \R^m$ on $M\times M$ as well.

Abbreviate $\Phi^\mathbb{L}_t:= \Phi^\mathbb{L}(t,.)$.
In order to show continuous differentiability it suffices to show this for $\pi_{TM}\circ \Phi^\mathbb{L}_1$ at the zero section, 
since $\pi_{TM}$ is everywhere smooth and $\Phi^\mathbb{L}_1$ is smooth outside of the zero section. 

(1) For all $(W,Z)\in \R^m\times \R^m$ the directional derivatives 
$$(p,v)\mapsto \partial_{(W,Z)} (\pi_{TM}\circ \Phi^\mathbb{L}_1)_{(p,v)}$$ 
exist on $U$ and 
$$d(\exp^\mathbb{L})_{{0}_p}(W,Z)=(W,W+Z)$$ 
in the above coordinates. The only points to check are the zero section. Let $(W,Z)\in T(TM)_{{0}_p}$ for $p\in V$. Then one has 
\begin{equation}\label{diffquot}
\frac{1}{t}\left(\pi_{TM}\circ \Phi^\mathbb{L}_1(p+tW,tZ)-\pi_{TM}\circ \Phi^\mathbb{L}_1(p,0)\right)=\frac{1}{t}\left(\pi_{TM}\circ \Phi^\mathbb{L}_1(p+tW,tZ)-p\right).
\end{equation}
For $Z=0$ one has 
$$\frac{1}{t}\left(\pi_{TM}\circ \Phi^\mathbb{L}_1(p+tW,0)-p\right)=\frac{1}{t}(p+tW-p)=W.$$ 
For $Z\neq 0$ it follows that the right hand side of \eqref{diffquot}
converges for $t\to 0$ to 
\begin{align*}
& d\pi_{TM}\left(\left.\frac{d}{dt}\right|_{t=0} \Phi^\mathbb{L}_t(p+tW,Z)\right)\\
=&\; d\pi_{TM}\left(\left.\frac{d}{dt}\right|_{t=0} \Phi^\mathbb{L}_0(p+tW,Z)+\left.\frac{d}{dt}\right|_{t=0} \Phi^\mathbb{L}_t(p,Z)\right)\\
=&\; d\pi_{TM}((W,0)+(Z,0))=W+Z
\end{align*}
since $d\pi_{TM}(W,Z)=W$. This proves the claim. 
 
(2) The directional derivatives $(p,v)\mapsto d(\pi_{TM}\circ \Phi^\mathbb{L}_1)_{(p,v)}(W,Z)$ are continuous. This implies that 
$\exp^\mathbb{L}$ is $C^1$ on $U$ by a standard theorem of calculus. The diffeomorphism property follows from the inverse 
function theorem and (1), since 
$$d(\exp^\mathbb{L})_{{0}_p}(W,Z) =(W,W+Z).$$
 
 In order to show that the directional derivatives are continuous let $(p,v)\in TV \cap U$ with $v\neq 0$ and $(W,Z)\in T(TM)_{(p,v)}$. One has 
 $$\partial_{(W,Z)} \left(\pi_{TM}\circ \Phi^\mathbb{L}_1\right)_{(p,v)}= \left.\frac{d}{dt}\right|_{t=0} \pi_{TM}\circ\Phi^\mathbb{L}_1(p+tW,v)+
 \left.\frac{d}{dt}\right|_{t=0} \pi_{TM}\circ\Phi^\mathbb{L}_1(p,v+tZ),$$
 since $\pi_{TM}\circ \Phi^\mathbb{L}_1$ is smooth around $(p,v)$. 
 
 Setting $\e:=\sqrt{h(v,v)}$ one has for the first term
 \begin{align*}
 \left.\frac{d}{dt}\right|_{t=0} \pi_{TM}\circ\Phi^\mathbb{L}_1(p+tW,v)&= \left.\frac{d}{dt}\right|_{t=0} \pi_{TM}\circ\Phi^\mathbb{L}_\e\left(p+tW,\frac{v}{\e}\right)\\
&=d\pi_{TM}(d\Phi^\mathbb{L}_\e)_{(p,v/\e)}(W,0).
 \end{align*}
The last term converges to $W$ uniformly on compact subsets of $M$ for $\e\to 0$, because $v/\e$ is bounded away from the zero section. 
 
For the second term one has 
\begin{align*}
\left.\frac{d}{dt}\right|_{t=0} \pi_{TM}\circ\Phi^\mathbb{L}_1(p,v+tZ)&=  \left.\frac{d}{dt}\right|_{t=0} \pi_{TM}\circ\Phi^\mathbb{L}_\e\left(p,\frac{v+tZ}{\e}\right)\\
&=d\pi_{TM}(d\Phi^\mathbb{L}_\e)_{(p,v/\e)}\left(0,\frac{Z}{\e}\right).
\end{align*}
The last term equals $Y(\e)$ where $Y$ is the unique solution of the Jacobi equation of $\mathbb{L}$ along $\gamma\colon t\mapsto 
\pi_{TM}\circ \Phi^\mathbb{L}_t(p,v/\e)$ with $Y(0)=0$ and $\dot{Y}(0)=\frac{Z}{\e}$. Since $v/\e\neq 0$ one can write 
$$\left(\begin{matrix} Y \\ \dot{Y}\end{matrix}\right) (t) = \exp \left[ \int_0^t A_\gamma(\sigma)d\sigma\right] \left(\begin{matrix} 0 
\\ \frac{Z}{\e}\end{matrix}\right)$$
for a curve $t\to A_\gamma (t)$ of $2m\times 2m$ matrices. Since $v/\e$ is bounded away from the zero section, $A_\gamma$ is 
uniformly bounded for bounded flow parameters $t$. Further since the Jacobi equation is an equation of second order, $A_\gamma(t)$ 
has the form 
$$A_\gamma(t)= \left(\begin{matrix} 0 & E_n \\ B_\gamma (t) & C_\gamma(t) \end{matrix}\right).$$
Thus 
\begin{align*}
\lim_{\e\to 0} Y(\e)&= \lim_{\e\to 0} \frac{1}{\e} d\pi_{TM} \left(\exp \left[ \int_0^\e A_\gamma(\sigma)d\sigma\right] \left(\begin{matrix} 0 \\ Z\end{matrix}\right)\right)\\
&=\lim_{\e\to 0} \frac{1}{\e}(E_n 0+ 0+\e \cdot E_n Z)=Z.
\end{align*}
This shows that the partial derivatives are continuous. 
\end{proof}

Denote by $\mathcal{A}_\tau$ the action of $L_\tau$, i.e. for $\eta\colon [s,t]\to N$ set 
$$\mathcal{A}_\tau(\eta):=\int_s^t L_\tau(\sigma,\dot\eta(\sigma))d\sigma\in \R\cup\{\infty\}.$$
Define the sets $J^\pm ((s,x))$ and and $I^\pm ((s,x))$ via the splitting $M\cong \R\times N$.

\begin{lemma}\label{normaltau}
Every $(s,x)\in \R\times N$ has a neighborhood $V\subseteq \R\times N$ such that for every $(t,y)\in V\cap J^+((s,x))$ the unique $\Phi_\tau$-trajectory 
$\eta\colon [s,t]\to N$ from $x$ to $y$ strictly minimizes $\mathcal{A}_\tau$ among all curves $\alpha\colon [s,t]\to N$ from $x$ to $y$. 
\end{lemma}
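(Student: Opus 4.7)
The plan is a field-of-extremals argument built on the smoothly extended flow $\Phi_\tau$, concluding via the Weierstrass excess of $L_\tau$. Using the extension of $\Phi_\tau$ to $\partial\mathcal{D}_\tau$ from the previous proposition, I would first define the smooth exponential-type map
$$E\colon\mathcal{D}_{(s,x)}\times[0,\delta_0]\to\R\times N,\qquad E(v,h)=(s+h,\;\pi_N\Phi_\tau^h(s,x,v)).$$
In local coordinates near $x$ one has $\pi_N\Phi_\tau^h(s,x,v)=x+hv+O(h^2)$, so after rescaling by $h$ the map $v\mapsto h^{-1}(\pi_N\Phi_\tau^h(s,x,v)-x)$ is a near-identity perturbation of $v\mapsto v$ on the compact set $\mathcal{D}_{(s,x)}$, uniformly for $h\in(0,\delta_0]$ with $\delta_0$ small. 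Consequently $v\mapsto E(v,h)$ is a diffeomorphism onto its image for every such $h$, and the union of these images contains $U\cap J^+(s,x)$ for a suitable neighborhood $U\subseteq\R\times N$ of $(s,x)$. This yields a unique $\Phi_\tau$-trajectory $\gamma_{(t,y)}\colon[s,t]\to N$ from $x$ to $y$ for every $(t,y)\in U\cap J^+(s,x)$, depending smoothly on $(t,y)$ away from $(s,x)$; the tangent vectors of these trajectories form a smooth $\mathcal{D}_\tau$-valued vector field $V$ on $(U\cap J^+(s,x))\setminus\{(s,x)\}$.

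Next I would use the Hilbert invariant one-form associated to this central field,
$$\omega=\bigl(L_\tau(\cdot,V)-\partial_vL_\tau(\cdot,V)\cdot V\bigr)\,dt+\partial_vL_\tau(\cdot,V)\cdot dx,$$
which is closed (in fact exact, with primitive the value function) on the relevant set by the central-field property. For any competitor $\eta\colon[s,t]\to N$ whose graph lies in $U\cap J^+(s,x)$, the standard identity combined with $\int_{\gamma_{(t,y)}}\omega=\int_{\eta_\Gamma}\omega$ gives
$$\mathcal{A}_\tau(\eta)-\mathcal{A}_\tau(\gamma_{(t,y)})=\int_s^t\mathcal{E}\bigl(\sigma,V(\sigma,\eta(\sigma)),\dot\eta(\sigma)\bigr)\,d\sigma,$$
with $\mathcal{E}(\sigma,v,w):=L_\tau(\sigma,w)-L_\tau(\sigma,v)-\partial_vL_\tau(\sigma,v)(w-v)$ the Weierstrass excess. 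Fiberwise convexity of $L_\tau$ on $\mathcal{D}_\tau$ (Lemma \ref{strictconvexity}(i)) yields $\mathcal{E}\ge 0$, and the quantitative strict convexity in the interior (Lemma \ref{strictconvexity}(ii)) gives $\mathcal{E}>0$ whenever $v$ or $w$ lies in $\inte\mathcal{D}_\tau$ and $v\neq w$. Competitors whose graphs leave $U$ are ruled out by a standard compactness argument after shrinking $U$.

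The main obstacle is the boundary degeneracy of $L_\tau$ on $\partial\mathcal{D}_\tau$, where fiberwise strict convexity fails along the radial direction and $L_\tau$ itself vanishes, so the excess could a priori degenerate when both $v$ and $w$ are null. I would resolve this using the strict convexity with smooth boundary of the domain $\mathcal{D}_{(\sigma,x)}$ from Lemma \ref{strictconvexity}(i): the supporting hyperplane of $\overline{\mathcal{D}_{(\sigma,x)}}$ at a boundary point $v$ meets $\overline{\mathcal{D}_{(\sigma,x)}}$ only at $v$, which translates into $\mathcal{E}(\sigma,v,w)>0$ for $w\in\overline{\mathcal{D}_{(\sigma,x)}}\setminus\{v\}$ via the variational characterization of $\partial_vL_\tau$ at null velocities as an outward conormal to $\mathcal{D}_{(\sigma,x)}$. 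Combined with the uniqueness from the first step, this upgrades the inequality to strict whenever $\eta\neq\gamma_{(t,y)}$.
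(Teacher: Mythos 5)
Your field-of-extremals argument is a legitimate classical alternative in spirit, but it hits a concrete difficulty that the paper's proof is designed to sidestep: $\partial_v L_\tau$ blows up on $\partial\mathcal{D}_\tau$. Indeed $L_\tau(t,v)=-\sqrt{\mathbb{L}(X_\tau+v)}$, so $\partial_v L_\tau=-\tfrac{1}{2\sqrt{\mathbb{L}}}\,\partial_v\mathbb{L}$, and $\partial_v\mathbb{L}$ does not vanish on $\mathbb{L}^{-1}(0)\setminus\overline{0}$ because $\partial_v^2\mathbb{L}$ is nondegenerate there. Hence the Hilbert form $\omega$ you write down is singular along $\partial J^+(s,x)$ (and also at the apex $(s,x)$); the invariant-integral identity $\int_{\eta_\Gamma}\omega=\int_{\gamma_{(t,y)}}\omega$ needs separate justification for competitors whose graphs touch $\partial J^+(s,x)$, which is unavoidable at least at the endpoint $(s,x)$; and the Weierstrass excess $\mathcal{E}(\sigma,v,w)$ is not defined when $v\in\partial\mathcal{D}_{(\sigma,x)}$. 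Your last sentence acknowledges the issue, but the proposed fix --- reading $\partial_v L_\tau$ at a null velocity as ``an outward conormal to $\mathcal{D}_{(\sigma,x)}$'' --- is not a finite covector, so the supporting-hyperplane argument has to be recast without referring to $\partial_v L_\tau$ at the boundary at all. A clean repair would split into cases: for $(t,y)\in\partial J^+(s,x)$ the only competitor with finite $\mathcal{A}_\tau$ is the null $\Phi_\tau$-trajectory itself, and for $(t,y)\in I^+(s,x)$ one must still control the singular contribution of $\omega$ near the apex and near any null segment of $\eta_\Gamma$.

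The paper takes a genuinely different and shorter route that avoids the degeneracy of $L$ entirely. Lemma \ref{phitau} identifies $\Phi_\tau$-trajectories with graph solutions of the Euler--Lagrange equation of $L$; those reparameterize to Euler--Lagrange solutions of $\mathbb{L}$, which, unlike $L$, is smooth across $\partial\mathcal{C}$; local minimality of $\mathcal{A}$ among causal curves is then exactly the convex-neighborhood theorem of \cite{minguzzi142}, and minimality of $\mathcal{A}_\tau$ follows by parameterization invariance of $\mathcal{A}$. In other words, the paper delegates the hard analysis to the nondegenerate quadratic Lagrangian $\mathbb{L}$ and an existing Lorentz--Finsler result, whereas your argument is a direct Weierstrass sufficiency proof for the degenerate $L_\tau$ and would need the boundary repairs just described before it is complete.
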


\begin{proof}
As usual define $\exp^\mathbb{L}_p:= \pi_{TM}\circ \Phi^\mathbb{L}(1,.)|_{U\cap TM_p}$  at $p\in M$.
According to Proposition \ref{Localdiff} every point $p\in M$ admits a normal neighborhood $V$, i.e. $\exp^\mathbb{L}_p$ is a diffeomorphism from a neighborhood of $0_p$ onto $V$. 
Further according to \cite{minguzzi142} every point $q\in J^+_V(p)$ (recall the definition from section \ref{results}) is connected to $p$ via a unique 
$\Phi^\mathbb{L}$-trajectory $\gamma$ with $\gamma(0)=p$ and $\gamma(1)=q$. $\gamma$ strictly minimizes $\mathcal{A}$ among all causal curves in $V$ 
from $p$ to $q$ up to monotone reparameterizations. Since $\tau(q)-\tau(p)$ bounds the $h$-length of a causal curves (see section \ref{results}) between 
$p$ and $q$ every causal curve between $p$ and $q$ is contained in $V$ given $p$ and $q$ belong to a sufficiently small 
subneighborhood. Thus the $\Phi^\mathbb{L}$-trajectory $\gamma$ is strictly minimal among all causal curves in $M$ from $p$ to $q$ up to monotone 
reparameterizations. Since causal curves are the only curves in $M$ with finite $\mathcal{A}$-action the strict minimality up to monotone reparameterizations 
even holds for all curves in $M$ between $p$ and $q$. 

Since $\mathcal{A}_\tau(\eta)=\mathcal{A}(H)$ for every curve $\eta\colon [s,t]\to N$, where $H\colon [s,t]\to M$ denotes the graph of $\eta$, the local 
minimality follows for the $\Phi_\tau$-trajectories.
Strict minimality follows from the fact that the conditions $\tau\circ H (u)=u$ fix the parameterization of $H\colon [s,t]\to M$ 
uniquely. Define $V\subset \R\times N$ via the identification of $\R\times N\cong M$.
\end{proof}

\begin{remark}\label{localL}
For $(s,x)\in \R\times N$, $(t,y)\in U\cap J^+((s,x))$ as in Lemma \ref{normaltau} and $\eta\colon [s,t]\to N$ the unique 
$\Phi_\tau$-trajectory from $x$ to $y$ one has
$$\mathcal{A}_\tau(\gamma)=L(v),$$
where $v=\left(\exp_{(s,x)}^\mathbb{L}\right)^{-1}(t,y)$. Further denote by $\mathcal{S}_s^t(x,y)$ the minimal action of a 
curve from $x$ to $y$ with respect to $\mathcal{A}_\tau$. Then the previous equality and Lemma \ref{normaltau} imply
$$\mathcal{S}_s^t(x,y)=L\left(v\right)$$
with $v$ as before. Thus $(t,y)\mapsto \mathcal{S}_s^t(x,y)$ is smooth for $(t,y)\in I_V^+((s,x))$ and $V$ as in 
Lemma \ref{normaltau} as follows from Proposition \ref{Localdiff}.
\end{remark}

\begin{prop}\label{mathershortening}
Let $\varepsilon >0$ and $I\times K\subseteq \R\times N$ a compact subset. Then there exist $\delta,\kappa >0$ and $C<\infty$ such that for $a,b,c\in I$ with 
$b-a,c-b\ge \e$ and 
$\Phi_\tau$-trajectories $x_i\colon [a,c]\to N$, $i=1,2$, with $\dist(x_1(b),x_2(b))\le \delta$, $x_i(b)\in K$ and 
$$\dist(\dot{x}_1(b),\dot{x}_2(b))^{2}\ge C\dist(x_1(b),x_2(b))$$ 
there exist $\mathcal{A}_\tau$-minimizers $y_i\colon [a,c]\to N$ with $y_1(a)=x_1(a)$, $y_1(c)=x_2(c)$, $y_2(a)=x_2(a)$, $y_2(c)=x_1(c)$ and 
$$\mathcal{A}_\tau(y_1)+\mathcal{A}_\tau(y_2)-\mathcal{A}_\tau(x_1)-\mathcal{A}_\tau(x_2)\le -\kappa \dist(\dot{x}_1(b),\dot{x}_2(b))^2.$$
\end{prop}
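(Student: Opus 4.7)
The plan is a Mather-type shortening argument: construct explicit crossed variational curves $\tilde y_1,\tilde y_2$ achieving the prescribed endpoint swap, bound their total action via the strict fiber convexity of $L_\tau$ from Lemma~\ref{strictconvexity}(ii), and then pass to genuine $\mathcal A_\tau$-minimizers $y_i$ between the same pairs of endpoints. These minimizers exist once $\tilde y_i$ certifies the corresponding causal relations; global hyperbolicity together with Lemma~\ref{phitau} produces $\mathcal A_\tau$-minimizers from $\mathcal A$-minimizers, and $\mathcal A_\tau(y_i)\le\mathcal A_\tau(\tilde y_i)$ propagates the inequality. Everything thus reduces to exhibiting $\tilde y_i$ with the required action bound.

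In a local chart around $K\subseteq N$, fix a smooth cutoff $\phi\colon[a,c]\to[0,1]$ equal to $0$ on $[a,b-\e/2]$ and to $1$ on $[b+\e/2,c]$, with $|\dot\phi|=O(1/\e)$, and set
$$\tilde y_1(t):=(1-\phi(t))x_1(t)+\phi(t)x_2(t),\qquad \tilde y_2(t):=\phi(t)x_1(t)+(1-\phi(t))x_2(t).$$
The endpoint conditions then hold, and the pointwise identities $\tilde y_1+\tilde y_2\equiv x_1+x_2$ and $\dot{\tilde y}_1+\dot{\tilde y}_2\equiv\dot x_1+\dot x_2$ are immediate. Outside the window $[b-\e/2,b+\e/2]$ each $\tilde y_i$ coincides identically with $x_1$ or $x_2$, so the action difference localises to this window. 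Writing $p:=x_2-x_1$, $w:=\dot x_2-\dot x_1$ and Taylor-expanding
$$\Delta(t):=L_\tau(\tilde y_1,\dot{\tilde y}_1)+L_\tau(\tilde y_2,\dot{\tilde y}_2)-L_\tau(x_1,\dot x_1)-L_\tau(x_2,\dot x_2)$$
to second order around $(x_1,\dot x_1)$, the additive identities kill the first-order part and the quadratic part takes the form
$$-\phi(1-\phi)\,Q(p,w)+\dot\phi\cdot(\text{bilinear in }p,w)+\dot\phi^{2}\,p^{T}(\partial_v^2L_\tau)\,p,$$
where the Hessian form $Q$ is dominated by $w^{T}(\partial_v^2 L_\tau)w\ge(\delta/|L_\tau|)|w|^2$ from Lemma~\ref{strictconvexity}(ii). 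Using smoothness of $\Phi_\tau$ over $I\times K$ to compare $p(t),w(t)$ with $p(b),w(b)$ on the window, the main term integrates to a gain of order $\e|w(b)|^2$, while the correction terms are bounded, with constants depending on $I\times K$ and $\e$, by $|p(b)|^2$, $|p(b)||w(b)|$ and $|p(b)|^2/\e$. The hypothesis $|w(b)|^2\ge C|p(b)|$ then majorises all corrections by $|w(b)|^2/C$ times $K$-dependent constants, so taking $C$ large enough and $\delta$ small enough absorbs them into half the gain and yields $\int\Delta\,dt\le-\kappa|w(b)|^2$.

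The principal obstacle is that $L_\tau$ is not smooth on the lightlike boundary $\partial\mathcal D_\tau$ (because of the square-root branch in $L=-\sqrt{\mathbb L}$), so the Taylor expansion above applies directly only to uniformly timelike $\Phi_\tau$-trajectories $x_i$. For trajectories near or on $\partial\mathcal D_\tau$ the convexity constant $\delta/|L_\tau|$ from Lemma~\ref{strictconvexity}(ii) nominally blows up, which is favourable, but the Taylor remainder is likewise amplified and must be controlled separately. Handling this regime --- for instance by replacing the Taylor comparison with a direct one using the explicit form $\mathcal S_s^t(x,y)=L(v)$ from Remark~\ref{localL} together with the strict convexity of the fibres $\mathcal D_{(t,x)}$ from Lemma~\ref{strictconvexity}(i) to keep the interpolating curves inside $\mathcal D_\tau$ --- is the main subtlety distinguishing the Lorentz--Finsler Mather shortening from its classical positive-definite counterpart.
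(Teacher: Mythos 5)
Your plan is the classical Mather shortening argument: splice the two trajectories by a cutoff-weighted linear interpolation in a chart and use fiber convexity to control the action gain. That approach is genuinely different from the paper's and, in its present form, does not close the argument, precisely at the point you flag yourself as the ``main subtlety.''

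There are two concrete obstructions. First, the interpolants $\tilde y_i=(1-\phi)x_1+\phi x_2$ need not be causal. Their velocities are $(1-\phi)\dot x_1+\phi\dot x_2\pm\dot\phi\,(x_2-x_1)$, and the domain $\mathcal D_\tau$ is a bounded convex body in each fiber that depends on the base point; even the middle convex combination $(1-\phi)\dot x_1+\phi\dot x_2$ is a combination of vectors in \emph{different} fibers $\mathcal D_{(t,x_1(t))}$, $\mathcal D_{(t,x_2(t))}$, so it has no reason to lie in $\mathcal D_{(t,\tilde y_i(t))}$, and the $\dot\phi\,(x_2-x_1)$ correction makes things worse. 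If either $\dot x_i$ lies on or near $\partial\mathcal D_\tau$, there is no uniform clearance to absorb these perturbations, so $\mathcal A_\tau(\tilde y_i)$ may simply be $+\infty$. Second, even where causality holds, the Taylor expansion you invoke is not available on $\partial\mathcal D_\tau$ (square-root branch of $L$), and the lower bound $(\partial_v^2L_\tau)\geq(\delta/|L_\tau|)\,\id$ from Lemma~\ref{strictconvexity}(ii) blows up there; both the Hessian and the remainder diverge, and you would have to quantify which wins. You acknowledge this regime but leave it open, and in a Lorentz--Finsler setting it is not a boundary case to be dispatched afterward: the near-lightlike regime is the crux of the proposition, since a dynamical optimal coupling has no a priori timelike lower bound.

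The paper avoids interpolation entirely. It first \emph{parallel-transports} $x_2$ to a nearby $\Phi_\tau$-trajectory $\chi_2$ through the common point $x_1(b)$, with $\dist(\dot\chi_2(b),\dot x_2(b))$ controlled by $\dist(x_1(b),x_2(b))$ via Lipschitz continuity of $\mathcal D_\tau$ and smoothness of the flow; this reduces the crossing to two curves meeting at $b$ at the same base point with a corner. The shortening at that corner is supplied by the separate Lemma~\ref{actionestimate}, which compares a broken extremal $\eta\ast\gamma$ to the minimizer using the explicit formula $\mathcal S^t_s=L\circ(\exp^{\mathbb L})^{-1}$ from Remark~\ref{localL}, the one-dimensional convexity of $L$ along chords inside $\mathcal D_{(s,x)}$, and an integral argument with the differential inequality $\partial_t\mathcal S_a^t\le L_\tau(t,\dot\gamma(t))$. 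The resulting bound carries a factor $1/|\mathcal S_a^c|$, which encodes exactly the degeneration near the light cone and \emph{improves} the estimate there rather than breaking it. The second step of the paper's proof then controls the error committed by replacing $x_2$ with $\chi_2$, using the hypothesis $\dist(\dot x_1(b),\dot x_2(b))^2\ge C\dist(x_1(b),x_2(b))$ to absorb all linear-in-$p$ corrections into half the quadratic gain. If you want to repair your interpolation route, you would at minimum need a quantitative statement that, under the same hypothesis, the $\tilde y_i$ remain inside $\mathcal D_\tau$ on the cutoff window, and a comparison mechanism that does not rely on a two-sided Taylor expansion at $\partial\mathcal D_\tau$; the paper's Lemma~\ref{actionestimate} is precisely the device that replaces Taylor expansion in this degenerate situation.
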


A curve $\eta\colon [a,b]\to N$ is {\it causal} if the graph $H$ is causal in $\R\times N\cong M$.

\begin{lemma}\label{actionestimate}
Let $I\times K\subseteq \R\times N$ be compact and $\e\in (0,1)$. Then there exists $\delta >0$ such that for all $a,b,c\in I$ with $b-a,c-b \ge \e$ and causal 
$\mathcal{A}_\tau$-minimizers $\eta\colon [a,b]\rightarrow N$, $\gamma \colon [b,c]\rightarrow N$ with $\eta (b)=\gamma (b)$ and $\dot \eta (b)\neq \dot\gamma (b)$ one has 
$$\mathcal{S}_{a}^{c}(\eta(a),\gamma(c))-\mathcal{A}_\tau(\eta)-\mathcal{A}_\tau(\gamma)
\le -\frac{\delta}{|\mathcal{S}_{a}^{c}(\eta(a),\gamma(c))|} |\dot\eta (b)-\dot\gamma(b)|^2.$$
\end{lemma}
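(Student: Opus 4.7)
The plan is to apply a Mather-type shortening: compare the broken path $\eta\cup\gamma$ with the local minimizer on a small window $[b-s,b+s]$ around $b$ and estimate the gain in action. Using the triangle-type inequality $\mathcal{S}_a^c(\eta(a),\gamma(c)) \le \mathcal{A}_\tau(\eta|_{[a,b-s]}) + \mathcal{S}_{b-s}^{b+s}(\eta(b-s),\gamma(b+s)) + \mathcal{A}_\tau(\gamma|_{[b+s,c]})$ (which requires $s\le \e$) together with the minimizing property of $\eta,\gamma$, one sees that the claim's left-hand side is bounded above by $\mathcal{S}_{b-s}^{b+s}(\eta(b-s),\gamma(b+s)) - \mathcal{A}_\tau(\eta|_{[b-s,b]}) - \mathcal{A}_\tau(\gamma|_{[b,b+s]})$, and the same argument yields $|\mathcal{S}_a^c(\eta(a),\gamma(c))| \ge |\mathcal{S}_{b-s}^{b+s}(\eta(b-s),\gamma(b+s))|$ (all $\mathcal{A}_\tau$-values being nonpositive). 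It thus suffices to bound the product of these two local quantities below by a constant times $|v_1-v_2|^2$, where $v_i := \dot\eta(b),\dot\gamma(b)$.

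For $s$ small enough depending only on $I\times K$ and $\e$, Lemma \ref{normaltau} applies to each of the three time intervals and Remark \ref{localL} expresses each action as $L$ of the corresponding $\mathbb{L}$-exponential pre-image vector. Since the Euler--Lagrange flow $\Phi_\tau$ extends smoothly to $\partial\mathcal{D}_\tau$ and $\eta,\gamma$ are its smooth trajectories, Taylor expansion gives
$$\mathcal{A}_\tau(\eta|_{[b-s,b]}) = sL_\tau(b,v_1)+O(s^2),\quad \mathcal{A}_\tau(\gamma|_{[b,b+s]}) = sL_\tau(b,v_2)+O(s^2),$$
$$\mathcal{S}_{b-s}^{b+s}(\eta(b-s),\gamma(b+s)) = 2sL_\tau(b,\bar v) + O(s^2),$$
with $\bar v := (v_1+v_2)/2$ and the $O(s^2)$-errors uniform on $I\times K$.

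The crux is a convexity-gap estimate with the correct denominator: since $L_\tau\le 0$ is convex on the fibre $\mathcal{D}_{(b,\eta(b))}$, the function $|L_\tau(b,\cdot)|$ is nonnegative and concave there, and an elementary concavity argument shows that its maximum on any segment is at most twice its value at the midpoint of that segment. Combining this bound with the pointwise estimate of Lemma \ref{strictconvexity}(ii) in the exact Taylor remainder formula for the convexity gap yields
$$L_\tau(b,v_1)+L_\tau(b,v_2)-2L_\tau(b,\bar v) \ge \frac{\delta_0}{8|L_\tau(b,\bar v)|}|v_1-v_2|^2$$
for a constant $\delta_0>0$ depending only on $I\times K$. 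Multiplying this with $|\mathcal{S}_{b-s}^{b+s}| = 2s|L_\tau(b,\bar v)|+O(s^2)$ and fixing $s=s_0$ small enough (depending on $I\times K$ and $\e$) to absorb the $O(s^3)$ error produces the product lower bound $\ge \frac{s_0^2\delta_0}{16}|v_1-v_2|^2$, whence the theorem follows with $\delta = s_0^2\delta_0/16$.

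The main obstacle is the exact cancellation between the $1/|L_\tau(\bar v)|$ factor in the convexity gap and the $|L_\tau(\bar v)|$ factor in $|\mathcal{S}_{b-s}^{b+s}|$; this is what keeps the estimate uniform as $v_1,v_2$ approach $\partial\mathcal{D}_\tau$ (the lightlike regime), where both factors degenerate to zero. A secondary technical point is to ensure the Taylor expansions remain valid up to and including points of $\partial\mathcal{D}_\tau$ where $L_\tau$ itself fails to be smooth, which rests on the smooth extension of the Euler--Lagrange flow $\Phi_\tau$.
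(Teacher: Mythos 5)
The reduction to the local window $[b-s,b+s]$, the use of $|\mathcal S_a^c|\ge|\mathcal S_{b-s}^{b+s}|$, and the convexity-gap estimate $L_\tau(v_1)+L_\tau(v_2)-2L_\tau(\bar v)\ge\frac{\delta_0}{8|L_\tau(\bar v)|}|v_1-v_2|^2$ (via the concavity of $|L_\tau|$ combined with Lemma \ref{strictconvexity}(ii)) are all sound, and the observation that the $|L_\tau(\bar v)|$ factors cancel in the product is exactly the right mechanism to get the Lorentzian denominator. However, there is a genuine gap in the error analysis that prevents the argument from closing by ``fixing $s=s_0$ small.''

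First, the asserted expansion $\mathcal S_{b-s}^{b+s}(\eta(b-s),\gamma(b+s))=2sL_\tau(b,\bar v)+O(s^2)$ with $O(s^2)$ uniform on $I\times K$ is not correct near $\partial\mathcal D_\tau$. Writing $\mathcal S_{b-s}^{b+s}=L(w)=-\sqrt{\mathbb L(w)}$ as in Remark \ref{localL}, one has $w=2s(X_\tau+\bar v)+O(s^2)$, hence $\mathbb L(w)=4s^2\mathbb L(X_\tau+\bar v)+O(s^3)$; since $\partial_v\mathbb L$ is an order-one quantity on $\partial\mathcal C$, taking the square root produces an error of size $O\!\left(s^2/|L_\tau(b,\bar v)|\right)$, not $O(s^2)$. (In flat Minkowski space this error vanishes exactly, which is probably why the expansion looks plausible, but in general it is curvature-driven and carries the $1/|L_\tau(\bar v)|$ weight.) Second, and more fundamentally, even granting a refined error $Cs^2/|L_\tau(\bar v)|$ (so that the weights still cancel in the product), the comparison reads
\begin{equation*}
\text{(local gap)}\cdot|\mathcal S_{b-s}^{b+s}|\le -\tfrac{\delta_0}{4}s^2|v_1-v_2|^2+C' s^3,
\end{equation*}
and the $C's^3$ term is \emph{not} proportional to $|v_1-v_2|^2$. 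Absorbing it into the main term requires $s\lesssim|v_1-v_2|^2$, so no fixed $s_0$ depending only on $I\times K$ and $\e$ works uniformly over pairs of nearly-parallel velocities. To rescue your scheme you would need to show that the Taylor remainder itself scales like $O(s^2|v_1-v_2|^2/|L_\tau(\bar v)|)$ — plausible since it vanishes when $v_1=v_2$, but this is precisely the delicate point and it is not addressed.

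The paper circumvents this by not Taylor-expanding at the breaking point at all: after reducing to a normal neighborhood, it differentiates $t\mapsto\mathcal S_a^t(\eta(a),\gamma(t))$ along the whole interval $[b,c]$ and integrates the resulting pointwise strong-convexity gap $\frac{\delta}{2|L_{\mathrm{min}}(s)|}|\dot\gamma(s)-Y_{s,\gamma(s)}|^2$. The uniform lower bound then comes from a dichotomy argument (using the logarithmic estimate $(L_Y')^2\ge\delta\log|L_{\min}/L_Y|$) that controls $|L_{\min}(s)|$ in terms of $|L_{Y_{s,\gamma(s)}}|$ on a set of definite measure, followed by the estimate $(s-a)|L_Y|\le C|\mathcal S_a^s|$. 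This avoids the small-$s$ limit entirely and is what makes the constant $\delta$ uniform in $|\dot\eta(b)-\dot\gamma(b)|$.
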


\begin{proof}
First notice that it suffices to prove the statement for $(b,\dot\eta(b)),(b,\dot\gamma(b))\in \inte\mathcal{D}_\tau$ since $\delta$ is claimed to be independent of $\gamma$ and $\eta$.
The assertion then follows for $(b,\dot\eta(b)),(b,\dot\gamma(b))\in \mathcal{D}_\tau$ via approximating them by tangent vectors $(b,\dot{\tilde\eta}(b)),
(b,\dot{\tilde\gamma}(b))\in \inte\mathcal{D}_\tau$ and using the continuity of $\mathcal{A}_\tau$ and $\mathcal{S}$.

Choose $\delta>0$ such that 
$$(\partial^2_v L_\tau)_{(s,v)}\ge \frac{\delta}{|L_\tau(s,v)|}\id$$
for all $(s,v)\in \inte \mathcal{D}_\tau$ with $s\in I$ and $v$ based at a point in $K$ according to Lemma \ref{strictconvexity} (ii). 
Cover $I\times K$ with finitely many neighborhoods $V\subset \R\times N$ according to Lemma \ref{normaltau}. Choose $\e>\e'>0$ such that for every 
$(s,x)\in I\times K$ the open set $W:=(s-\e',s+\e')\times B_{\e'}(x)$ is contained in at least one $V$. It suffices to prove the claim for $a',b,c'$ with $b-a'=c'-b=\e'$ 
since concatenating an $\mathcal{A}_\tau$-minimizer from $\eta(a')$ to $\gamma(c')$ with the arcs $\eta|_{[a,a']}$ and $\gamma|_{[c',c]}$ only decreases the left hand side as well as
increases the right hand side. The proof continues to use $a$ and $c$ instead of $a'$ and $c'$ though. 

For $(t,y)\in I^+(a,\eta(a))\cap W$ denote by $Y_{t,y}$ the tangent at $t$ to the unique $\Phi_\tau$-trajectory on $[a,\tau(y)]$ from $\eta(a)$ to $y$. The map 
$(t,y)\mapsto Y_{t,y}$ is smooth for $(t,y)\in I^+((a,\eta(a)))$ by Lemma \ref{phitau} and Proposition \ref{Localdiff}. The inequality
$$\mathcal{S}_{a}^t(\eta(a),\gamma(t))\le \mathcal{A}_\tau(\gamma|_{[b,t]})+\mathcal{S}_{a}^b(\eta(a),\eta(b))$$
for $b\le t\le c$ implies 
$$\partial_t|_{t=s} \mathcal{S}_{a}^t(\eta(a),\gamma(t))\le L_\tau(s,\dot\gamma(s))$$ 
with equality if and only if $\dot\gamma(s)= Y_{s,\gamma(s)}$. One has 
$$\partial_t|_{t=s} \mathcal{S}_{a}^t(\eta(a),\gamma(t))= (\partial_t|_{t=s} \mathcal{S}_{a}^t)(\eta(a),\gamma(s))+(\partial_2 \mathcal{S}_{a}^s)_{(\eta(a),\gamma(s))}(\dot\gamma(s)),$$
i.e. $\partial_t|_{t=s} \mathcal{S}_{a}^t(\eta(a),\gamma(t))$ is an affine function of $\dot\gamma(s)\in TN_{\gamma(s)}$. Thus it must coincide with the tangent to $L_\tau$ at 
$Y_{s,\gamma(s)}$, i.e. 
$$\partial_s\left|_{t=s} \mathcal{S}_{a}^t(\eta(a),\gamma(t))\right.=L_\tau(s,Y_{s,\gamma(s)})
+(\partial_v L_\tau)_{Y_{s,\gamma(s)}}(\dot\gamma(s)-Y_{s,\gamma(s)}).$$ 
Then one has 
\begin{align*}
L_\tau(s,\dot\gamma(s))&\ge L_\tau(s,Y_{s,\gamma(s)})+\partial_v L_\tau(\dot\gamma(s)-Y_{s,\gamma(s)}) +\frac{\delta}{2|L_\text{min}(s)|}|\dot\gamma(s)-Y_{s,\gamma(s)}|^2\\
&= \partial_s \left[\mathcal{S}_{a}^s(\eta(a),\gamma(s))\right]+\frac{\delta}{2|L_\text{min}(s)|}|\dot\gamma(s)-Y_{s,\gamma(s)}|^2.
\end{align*}

\begin{claim}
Denote by $S$ a lower bound of $\mathcal{S}_s^t(x,y)$ for $(s,x),(t,y)\in I\times K$ and set 
$$C_1:=2\exp\left(\frac{1}{\delta}\left(\frac{2S}{\e}\right)^2\right).$$ 
 Further denote by $L_\text{min}(s)$ the minimum of $L_\tau$ on the convex hull of $\dot\gamma(s)$ and $Y_{s,\gamma(s)}$. 
Then there exists a subset $B$ of $[0,\e]$ of measure at least $\e/2$ such that $|L_\text{min}(s)|\le C_1|L_{Y_{s,\gamma(s)}}|$ for all $s\in B$.
\end{claim}

\begin{proof}[Proof of the claim]
For $v, w \in \inte \mathcal{D}_{(s,x)}$ with $v\neq w$ and $(s,x)\in I\times K$ consider 
the convex hull $\conv\{v,w\}$.
Parameterize $\conv\{v,w\}$ by 
$$f\colon \lambda \in [0, |v-w|]\mapsto \left(1-\frac{\lambda}{|v-w|}\right) v+ \frac{\lambda}{|v-w|}w.$$
Next denote by $L_\text{min}$ the minimum of $L\circ f$ and $0\le \lambda_0\le |v-w|$ the parameter achieving this minimum. Let $Y\in\conv\{v,w\}$ and 
denote by $L_Y' :=\left.\frac{d}{d\lambda}\right|_{\lambda=\lambda_1}(L\circ f)(\lambda)$ where $f(\lambda_1)=Y$. Then one has 
\begin{align*}
(L_Y')^2&= |L_Y'||0-L_Y'|= |L_Y'|\cdot \left| \int_{\lambda_1}^{\lambda_0} (L\circ f)''(\lambda)d\lambda\right|\ge |L_Y'|\cdot\delta\left|\int_{\lambda_1}^{\lambda_0}
\frac{1}{|L\circ f(\lambda)|}d\lambda\right|\\
&\ge \delta \int_{\lambda_1}^{\lambda_0} \frac{(L\circ f)'(\lambda)}{|L\circ f (\lambda)|} d\lambda =\delta \log \left| \frac{L_\text{min}}{L_Y} \right|
\end{align*}
where the last two manipulations follow from the convexity of $L\circ f$. 

Now assume that $|L_\text{min}(s)|\ge C_1|L_{Y_{s,\gamma(s)}}|$ on a set $B\subseteq [b,c]$ of measure at least $\e/2$. Then from the first paragraph one has 
$|L_{Y_{s,\gamma(s)}}'|\ge \sqrt{\delta \log C_1}$ on $B$. Note that this implies $L_{Y_{s,\gamma(s)}}'\le 0$ since 
$$0\ge \partial_s \mathcal{S}_{a}^s(\eta(a),\gamma(s))= L_\tau(s,Y_{s,\gamma(s)})+L_{Y_{s,\gamma(s)}}'$$
implies $L_{Y_{s,\gamma(s)}}'\le -L_\tau(s,Y_{s,\gamma(s)})\le -S$ by Remark \ref{localL}. Consequently 
$$\partial_s \mathcal{S}_{a}^s(\eta(a),\gamma(s))= L_\tau(s,Y_{s,\gamma(s)})+L_{Y_{s,\gamma(s)}}'\le L_{Y_{s,\gamma(s)}}' <-\sqrt{\delta \log C_1}$$
which implies 
$$\mathcal{S}_{a}^{c}(\eta(a),\gamma(c))- \mathcal{S}_{a}^b(\eta(a),\gamma(b)) 
\le \int_B \partial_s \mathcal{S}_{a}^s(\eta(a),\gamma(s)) ds \le -\frac{\e}{2} \sqrt{\delta \log C_1}.$$
This constitutes a contradiction to the definition of $C_1$.
\end{proof}

The claim thus implies
\begin{align*}
\mathcal{S}_{a}^{c}(\eta(a),\gamma(c))- &\mathcal{S}_{a}^b(\eta(a),\gamma(b))\le \mathcal{A}_\tau (\gamma)-
\frac{\delta}{2}\int_b^{c} \frac{|\dot\gamma(s)-Y_{s,\gamma(s)}|^2}{|L_\text{min}(s)|}ds\\
&\le \mathcal{A}_\tau (\gamma)-\frac{\delta}{2C_1}\int_B \frac{|\dot\gamma(s)-Y_{s,\gamma(s)}|^2}{|L_{Y_{s,\gamma(s)}}|} ds.
\end{align*}
Next note that the continuity of $\Phi_\tau$ and the invariance of $\partial \mathcal{D}_\tau =L_\tau^{-1}(0)$ under $\Phi_\tau$ implies the existence of a 
$C_2<\infty$ depending only on $I\times K$ such that $(s-a)|L_{Y_{s,\gamma(s)}}|\le C_2 |S_{a}^s(\eta(a),\gamma(s))|$ for all $t\in [b,c]$. Thus one has 
\begin{align*}
\mathcal{S}_{a}^{c}(\eta(a),\gamma(c))- &\mathcal{S}_{a}^b(\eta(a),\gamma(b))
\le \mathcal{A}_\tau (\gamma)-\delta_1\int_B \frac{|\dot\gamma(s)-Y_{s,\gamma(s)}|^2}{|S_{a}^s(\eta(a),\gamma(s))|}ds\\
&\le \mathcal{A}_\tau (\gamma)-\frac{\delta_1}{{|S_{a}^{c}(\eta(a),\gamma(c))|}}\int_B |\dot\gamma(s)-Y_{s,\gamma(s)}|^2 ds.
\end{align*}
Note that again due to the continuity of $\Phi_\tau$ there exists $C_3<\infty$ depending only on $I\times K$ such that for  all $t\in [b,c]$ 
one has $|\dot\gamma(t)-Y_{t,\gamma(t)}|\le C_3 |\dot\gamma(b)-Y_{b,\gamma(b)}|$. This follows from the fact that the image of $Y$ 
is locally invariant under $\Phi_\tau$. Thus there exists $\delta_3>0$ such that 
$$S_{a}^{c}(\eta(a),\gamma(c))- S_{a}^b(\eta(a),\gamma(b))
\le \mathcal{A}_\tau (\gamma)-\frac{\delta_3}{{|S_{a}^{c}(\eta(a),\gamma(c))|}} |\dot\gamma(b)-Y_{b,\gamma(b)}|^2.$$
Finally notice that $Y_{b,\gamma(b)}=\dot\eta(b)$. This finishes the proof.
\end{proof}

\begin{proof}[Proof of Proposition \ref{mathershortening}]
With the same argument as in the preceding proof one can assume that the curves are contained in a normal neighborhood. 

(i) The first step is to show that 
$$(c,x_2(c))\in J^+((a,x_1(a)))$$
under the assumptions in the proposition and for $0<C<\infty$ sufficiently large.
Using the local Lipschitz continuity of 
$$(s,x)\mapsto \mathcal{D}_{(s,x)}=\mathcal{D}_\tau\cap (\{s\}\times TN_x)$$ 
with respect to the Hausdorff distance, which follows directly 
from local Lipschitz continuity of $p\mapsto \mathcal{C}\cap \{d\tau=1\}$ with respect to the Hausdorff distance, there exists 
$C_0<\infty$ only depending on $I\times K$ such that one can choose $(b,\dot\chi_2(b))\in \mathcal{D}_{(b,x_1(b))}$ with 
\begin{align}\label{Etang}
\dist(\dot\chi_2(b),\dot x_2(b))\le C_0\dist(\chi_2(b),x_2(b))=C_0\dist(x_1(b),x_2(b)).
\end{align}
With the smoothness of $\Phi_\tau$ this then implies 
$$\dist(\chi_2(c),x_2(c))\le C_1\dist(x_1(b),x_2(b))$$ 
for some $C_1<\infty$ only depending on $I\times K$ and $\e$. Now the triangle inequality yields 
\begin{align*}
\dist((c,x_2(c)), &\partial J^+((a,x_1(a)))\\
&\ge \dist((c,\chi_2(c)), \partial J^+((a,x_1(a)))-C_1 \dist(x_1(b),x_2(b)).
\end{align*}

Recall from Remark \ref{localL} that 
$$\mathcal{S}_{a}^{c}(x_1(a),\chi_2(c))^2=L((\exp^\mathbb{L}_{(a,x_1(a))})^{-1}((c,\chi_2(c))))^2$$
and that $L^2$ is smooth up to and beyond $\partial\mathcal{D}_\tau$. Thus there exists $\delta_0>0$, only depending on $I\times K$ and $\e$, such that 
$$\dist((c,\chi_2(c)), \partial J^+((a,x_1(a)))\ge \delta_0 \mathcal{S}_{a}^{c}(x_1(a),\chi_2(c))^2.$$
Now Lemma \ref{actionestimate} implies $\mathcal{S}_{a}^{c}(x_1(a),\chi_2(c))^2\ge \delta_2 \dist(\dot{x}_1(b),\dot{\chi}_2(b))^2$
as a special case. With \eqref{Etang} one then obtains 
$$\mathcal{S}_{a}^{c}(x_1(a),\chi_2(c))^2\ge \delta_3 \dist(\dot{x}_1(b),\dot{x}_2(b))^2$$
for some $\delta_3>0$ if $C$ is sufficiently large. Consequently one has
$$\dist((c,x_2(c)), \partial J^+((a,x_1(a)))\ge \frac{1}{2}\dist((c,\chi_2(c)), \partial J^+((a,x_1(a)))$$
for $\dist(\dot{x}_1(b),\dot{x}_2(b))^{2}\ge C\dist(x_1(b),x_2(b))$ with $C$ sufficiently large. Thus one concludes $(c,x_2(c))\in J^+((a,x_1(a)))$.

(ii) Remark \ref{localL} implies that 
$$\mathcal{S}_{a}^{c}(x_1(a),x_2(c))^2\ge \delta_4 \dist((c,x_2(c)),J^+((a,x_1(a))))$$
for some $\delta_4>0$ depending only on $I\times K$ and $\e$, since the fiber derivative of $L_\tau^2$ does not vanish anywhere 
on $\partial \mathcal{D}_\tau$. Thus one has 
$$\mathcal{S}_{a}^{c}(x_1(a),x_2(c))^2\ge \frac{\delta_4\delta_0}{2} \mathcal{S}_{a}^{c}(x_1(a),\chi_2(c))^2.$$
With the convexity of $\mathcal{S}$ one then concludes
\begin{equation}\label{actionerror}
\begin{split}
|\mathcal{S}_{a}^{c}(x_1(a),x_2(c))-\mathcal{S}_{a}^{c}(x_1(a),\chi_2(c))|\le
 \frac{C_2}{|\mathcal{S}_{a}^{c}(x_1(a),\chi_2(c))|}\dist(x_1(b),x_2(b))
\end{split}
\end{equation}
for some $C_2<\infty$ depending only on $I\times K$ and $\e$. Finally one has 
\begin{align*}
\mathcal{S}_{a}^{c}(x_1(a),x_2(c))-&\mathcal{S}_{a}^{b}(x_1(a),x_1(b))-\mathcal{S}_{b}^{c}(x_2(b),x_2(c))\\
\le&\; \mathcal{S}_{a}^{c}(x_1(a),\chi_2(c))-\mathcal{S}_{a}^{b}(x_1(a),x_1(b))-\mathcal{S}_{b}^{c}(\chi_2(b),\chi_2(c))\\
&+|\mathcal{S}_{a}^{c}(x_1(a),\chi_2(c))-\mathcal{S}_{a}^{c}(x_1(a),x_2(c))|\\
&+|\mathcal{A}_\tau(x_2|_{[b,c]})-\mathcal{A}_\tau(\chi_2)|.
\end{align*}
The first term on the right hand side is bounded from above by 
$$-\frac{\delta}{|\mathcal{S}_{a}^{c}(x_1(a),\chi_2(c))|}\dist(\dot{x}_1(b),\dot{x}_2(b))^2$$
according to Lemma \ref{actionestimate} and the choice of $\chi_2$ for some $\delta>0$. The second term is bounded from above by 
$$\frac{C_2}{|\mathcal{S}_{a}^{c}(x_1(a),\chi_2(c))|}\dist(x_1(b),x_2(b))$$
according \eqref{actionerror}. Finally one has 
$$|\mathcal{A}_\tau(x_2|_{[b,c]})-\mathcal{A}_\tau(\chi_2)|\le C_3\dist(x_1(b),x_2(b))$$ 
according to \eqref{Etang} for some $C_3<\infty$. Now note that 
$$\dist(\dot{x}_1(b),\dot{\chi}_2(b))\ge \dist(\dot{x}_1(b),\dot{x}_2(b))-C_0\dist(x_1(b),x_2(b))$$
by the triangle inequality and \eqref{Etang}. This implies that 
\begin{align*}
\mathcal{S}_{a}^{c}(x_1(a),x_2(c))-\mathcal{S}_{a}^{b}(x_1(a),x_1(b))-\mathcal{S}_{b}^{c}(x_2(b),x_2(c))
\le -\frac{\delta}{2}\dist(\dot{x}_1(b),\dot{x}_2(b))
\end{align*}
for $C<\infty$ sufficiently large.

Repeating the arguments for $\mathcal{S}_{a}^{c}(x_2(a),.)$, $x_2|_{[a,b]}$ and $x_1|_{[b,c]}$ one obtains
\begin{align*}
\mathcal{S}_{a}^{c}(x_1(a),x_1{c})+\mathcal{S}_{a}^{c}(x_2(a),x_2(c))-\mathcal{S}_{a}^c(x_1(a),x_2(c))&-\mathcal{S}_{a}^c(x_2(a),x_1(c))\\
&\le -\delta \dist(\dot{x}_1(b),\dot{x}_2(b))^2 
\end{align*}
for $C<\infty$ sufficiently large. The claim is now immediate for $y_1\colon [a,c]\to N$ the 
$\mathcal{A}_\tau$-minimizer from $x_1(a)$ to $x_2(c)$ and $y_2\colon [a,c]\to N$ the $\mathcal{A}_\tau$-minimizer from $x_2(a)$ to $x_1(c)$.
\end{proof}

\begin{proof}[Proof of Theorem \ref{intermediateregularity}]
Let $\Pi$ be a dynamical optimal coupling of $\mu$ and $\nu$. For $k\in\N$ consider the subcoupling
$$\Pi_k:= \Pi|_{\{\gamma|\; \tau(\gamma(1))-\tau(\gamma(0))\ge 1/k\}}.$$
Since the supports of $\mu$ and $\nu$ are disjoint one knows that for every compact set $I\times K\subseteq \R\times N\cong M$ there exists
$k$ with
$$(\ev\nolimits_t)_\sharp \Pi|_{I\times K}\equiv (\ev\nolimits_t)_\sharp \Pi_k|_{I\times K}$$
for all $t\in [0,1]$.  Fix $I\times K\subset M$ compact and $k\in\N$ such that $\gamma\in \supp\Pi_k$  for all $\gamma\in \supp\Pi$ with $\gamma\subset I\times K$.

Consider the reparameterization $\eta\colon [\tau(\gamma(0)),\tau(\gamma(1))]\to M$ of $\gamma\in \supp \Pi_k$ such that $\tau\circ \eta(s)=s$. 
Next let $\e_0>0$ be given and consider the restriction of $\gamma\in \supp\Pi_k$ to $[\e_0,1-\e_0]$. 
Then there exists $\e_1>0$ only depending on $\e_0$ such that $|s(t)-\tau(\gamma(0))|,|s(t)-\tau(\gamma(1))|\ge 2\e_1$ for all $\gamma\in \supp\Pi_k$ and
the reparameterization $\eta$ with $\eta(s(t))\equiv\gamma(t)$ and $t\in [\e_0,1-\e_0]$.

Let $(\gamma_1,t_1),(\gamma_2,t_2)\in \supp\Pi_k\times[\e_0,1-\e_0]$. Denote with 
$$\eta_i\colon [\tau(\gamma_i(0)),\tau(\gamma_i(1))]\to M$$ 
the reparameterization of $\gamma_i$ as in the previous paragraph. Since $\tau$ is Lipschitz on $I\times K$ with constant $L<\infty$, i.e.
$$|\tau(\gamma_1(t_1))-\tau(\gamma_2(t_2))|\le L\dist(\gamma_1(t_1),\gamma_2(t_2))$$
one has $|b_2-b_1|\le L\dist(\gamma_1(t_1),\gamma_2(t_2))$ for $b_i=\tau(\gamma_i(t_i))$ and $i=1,2$. 
For $\dist(\gamma_1(t_1),\gamma_2(t_2))$ smaller than $\e_1/L$ one has thus 
$$b_1-\tau(\gamma_2(0)),\tau(\gamma_2(1))-b_1\ge \e_1.$$
Therefore $\eta_2$ is well defined on $[b_1-\e_1,b_1+\e_1]$.
With the smoothness of $\Phi_\tau$ one concludes that there exists a constant $C_0<\infty$ depending only on $\e_0$ such that
$$\dist(\dot\eta_2(b_1),\dot\eta_2(b_2))\le C_0 \dist(\gamma_1(t_1),\gamma_2(t_2)).$$
Further the triangle inequality implies that 
$$\dist(\dot\eta_2(b_1),\dot\eta_1(b_1))\ge \dist(\dot\eta_1(b_1),\dot\eta_2(b_2))-C_0\dist(\gamma_1(t_1),\gamma(t_2))$$
and 
$$\dist(\eta_2(b_1),\eta_1(b_1))\le C_1\dist(\gamma_1(t_1),\gamma_2(t_2))$$
for some $C_1<\infty$ depending only on $I\times K$.

Now choose $\delta,\kappa>0$ and $C<\infty$ for $\e:=\e_1$ and $I\times K$ according to Proposition \ref{mathershortening}.
For $C_2<\infty$ sufficiently large assuming that
$$C_2\dist(\gamma_1(t_1),\gamma_2(t_2))< \dist([\dot{\gamma}_1(t_1)],[\dot{\gamma}_2(t_2)])^2$$
and $\dist(\gamma_1(t_1),\gamma_2(t_2))\le \min\{\delta/C_1,\e_1/L\}$ one has 
$$C \dist(\eta_2(b_1),\eta_1(b_1))< \dist(\dot\eta_2(b_1),\dot\eta_1(b_1))^2.$$
Then Proposition \ref{mathershortening} implies that
\begin{align*}
&c_L(\eta_1(b_1-\e),\eta_2(b_1+\e))+c_L(\eta_2(b_1-\e),\eta_1(b_1+\e))\\
&-c_L(\eta_1(b_1-\e),\eta_1(b_1+\e))-c_L(\eta_2(b_1-\e),\eta_2(b_1+\e))<0.
\end{align*}
With the triangle inequality for $c_L$ follows
$$c_L(\gamma_1(0),\gamma_2(1))+c_L(\gamma_2(0),\gamma_1(1))-c_L(\gamma_1(0),\gamma_1(1))-c_L(\gamma_2(0),\gamma_2(1))<0$$
clearly contradicting the cyclic monotonicity of the optimal coupling $(\ev_0,\ev_1)_\sharp\Pi$ of $\mu$ and $\nu$, see Proposition \ref{propcycmon}.
Thus there exists $D<\infty$ with 
$$ \dist([\dot{\gamma}_1(t_1)],[\dot{\gamma}_2(t_2)])^2\le D\dist(\gamma_1(t_1),\gamma_2(t_2))$$
showing the injectivity of the projection and the H\"older continuity of the inverse.
\end{proof}

A $C^2$-function $L_T\colon \R\times TN\to \R$ is a {\it Tonelli-Lagrangian}, see \cite{bebu2}, if for all $(t,x)\in \R\times N$
\begin{itemize}
\item[(i)] the restriction $L_T|_{\{t\}\times TN_x}$ is convex with positive definite Hessian everywhere,
\item[(ii)] $L_T(t,v)/|v|\to \infty$ as $|v|\to \infty$ for $v\in TN_x$ and
\item[(iii)] the Euler-Lagrange flow of $L_T$ is complete.
\end{itemize}

\begin{proof}[Proof of Theorem \ref{lipreg}]
Choose a compact set ${K}'\subseteq \inte\mathcal{D}_\tau$ such that $K\subseteq \inte {K}'$ the interior of ${K}'$. Next construct a 
Tonelli-Lagrangian $L_T\colon\R\times TN\to \R$ with $L_T\ge L_\tau$ and $L_T|_{{K}'}\equiv L_\tau|_{{K}'}$. Then every $\mathcal{A}_\tau$-minimizer 
$\gamma$ with $\dot\gamma\in {K}'$ is also a minimizer for the action induced by $L_T$. Now the claim follows from the classical regularity result for Tonelli-Lagrangians, e.g.
\cite[Theorem A]{bebu2}. 
\end{proof}

\subsection{Proof of Theorem \ref{Thmmonge} and \ref{Thmmonge2}}\label{secmonge}

First the proof of Theorem \ref{Thmmonge} is given. After that the necessary changes to the argument for the proof of Theorem 
\ref{Thmmonge2} are indicated.

The proof of Theorem \ref{Thmmonge} is essentially carried out via two propositions. 
\begin{prop}\label{mongeprop1}
Let $(\mu,\nu)\in\mathcal{P}_\tau^+(M)$. Assume that $\mu$ and $\nu$ are concentrated on a locally uniformly spacelike hypersurface 
$A$ and an achronal set $B$, respectively. Further assume that $\mu$ is absolutely continuous with respect to the Lebesgue measure 
on $A$ and that $\supp\mu$ is $\nu$-neglectable. Then for every optimal coupling $\pi$ of $\mu$ and $\nu$ there exists a set 
$R\subset M\times M$ of full $\pi$-measure such that for all $(x,y_1),(x,y_2)\in R$ there exists an $\mathcal{A}$-minimizer 
$\gamma\colon [0,1]\to M$ containing $x,y_1,y_2$ in its trace.
\end{prop}

The proof of Proposition \ref{mongeprop1} needs the following lemma. Recall that $x\in N$ is a {\it Lebesgue point} of a set $C\subset N$ if 
$$\lim_{\delta\to 0} \frac{\mathcal{L}^n(C\cap B_\delta(x))}{\mathcal{L}^n(B_\delta(x))}=1,$$
where $B_\delta(y)$ continues to denote the metric ball of radius $\delta>0$ around $x$.

\begin{lemma}\label{lemleb}
Let $N$ be a manifold, $\mu,\nu\in \mathcal{P}(N)$, $\pi\in \Pi(\mu,\nu)$ and $\Sigma$ a $\sigma$-compact set such that $\pi(\Sigma)=1$. 
Assume that $\mu$ is absolutely continuous with respect to the Lebesgue measure on $N$. Then $\pi$ is concentrated on a $\sigma$-compact set $R(\Sigma)$ 
such that for all $(x,y)\in R(\Sigma)\subset N\times N$ the point $x$ is a Lebesgue point of $\pi_1(\Sigma\cap (N\times \overline{B_r(y)}))$ for all $r>0$. 
\end{lemma}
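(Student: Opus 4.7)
The plan is to adapt the Champion--Pascal argument to the manifold setting by approximating every closed metric ball from inside by a countable family of closed balls. First, I would fix a countable dense subset $D\subseteq N$ (which exists since $N$ is second countable) and consider the countable base
\[
\mathcal{F}:=\bigl\{\overline{B(d,q)}\,:\,d\in D,\ q\in\Q_{>0}\bigr\}.
\]
For each $B\in\mathcal{F}$ put $A_B:=\pi_1(\Sigma\cap(N\times B))$. Since $\Sigma$ is $\sigma$-compact, $N\times B$ closed, and $\pi_1$ continuous, $A_B$ is $\sigma$-compact and in particular Borel.

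The next step is to apply the classical Lebesgue density theorem, which works verbatim on $N$ by passing to a countable atlas and using the Lebesgue measure class defined through any smooth volume form (e.g.\ the one induced by $h$). For each $B\in\mathcal{F}$ the set
\[
E_B:=\Bigl\{x\in A_B\ \Big|\ \lim_{\delta\to 0}\frac{\mathcal{L}^n(A_B\cap B_\delta(x))}{\mathcal{L}^n(B_\delta(x))}<1\Bigr\}
\]
has $\mathcal{L}^n$-measure zero, hence $\mu(E_B)=0$ by absolute continuity of $\mu$. Setting $E:=\bigcup_{B\in\mathcal{F}}E_B$ one obtains $\mu(E)=0$, whence
\[
\pi\bigl((E\times N)\cap\Sigma\bigr)\le\mu(E)=0.
\]
Inner regularity of $\pi$ then produces a $\sigma$-compact subset $R(\Sigma)\subseteq\Sigma\setminus(E\times N)$ with $\pi(R(\Sigma))=1$.

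Finally I would verify the Lebesgue-point condition for $R(\Sigma)$. Given $(x,y)\in R(\Sigma)$ and $r>0$, choose $d\in D$ with $\dist(d,y)<r/3$ and $q\in\Q\cap(\dist(d,y),r/2)$; then $y\in\overline{B(d,q)}\subseteq\overline{B(y,r)}$. Writing $B:=\overline{B(d,q)}\in\mathcal{F}$, one has $(x,y)\in\Sigma\cap(N\times B)$, so $x\in A_B$. Because $x\notin E\supseteq E_B$, the point $x$ is a Lebesgue point of $A_B$. Since $A_B\subseteq\pi_1(\Sigma\cap(N\times\overline{B(y,r)}))$, the elementary sandwich
\[
\frac{\mathcal{L}^n(A_B\cap B_\delta(x))}{\mathcal{L}^n(B_\delta(x))}\le\frac{\mathcal{L}^n\bigl(\pi_1(\Sigma\cap(N\times\overline{B(y,r)}))\cap B_\delta(x)\bigr)}{\mathcal{L}^n(B_\delta(x))}\le 1
\]
forces the middle ratio to tend to $1$ as well, which is exactly the Lebesgue-point condition for $\pi_1(\Sigma\cap(N\times\overline{B(y,r)}))$ at $x$.

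The main obstacle is purely combinatorial: one has to ensure that $\mathcal{F}$ is rich enough so that every pair $(y,r)$ admits a ball in $\mathcal{F}$ containing $y$ and contained in $\overline{B(y,r)}$, which is handled by the density of $D$ and of the rationals. All the analytic content collapses to the classical Lebesgue density theorem together with the absolute continuity of $\mu$.
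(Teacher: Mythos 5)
Your argument is correct and is, in substance, the Champion--de~Pascale proof that the paper cites and declares to carry over mutatis mutandis to manifolds: the countable family of closed balls with rational radii and centers in a countable dense set, the Lebesgue density theorem applied chart-by-chart to each $\sigma$-compact set $A_B$, the union of the resulting $\mu$-null exceptional sets, and the final sandwich between $A_B$ and $\pi_1(\Sigma\cap(N\times\overline{B(y,r)}))$. The only cosmetic imprecision is in the definition of $E_B$, where you should take the set of $x\in A_B$ at which the density limit either fails to exist or is less than $1$ (equivalently, the $\liminf$ is $<1$); the density theorem still gives $\mathcal{L}^n(E_B)=0$, and the rest of the argument is unchanged.
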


A version for the case $M= \R^n$ is proved in \cite[Lemma 4.3]{chpa}. The proof carries over mutatis mutandis to the present situation of manifolds.

\begin{proof}[Proof of Proposition \ref{mongeprop1}]
Let $\Pi$ be a dynamical optimal coupling of $\mu$ and $\nu$. Then $\pi :=(\ev_0,\ev_1)_\sharp \Pi$ is an optimal coupling of $\mu$ and $\nu$. 
One can assume that $\mathcal{A}$-minimizers between points in $\supp\mu$ and $\supp \nu$ are unique up to parameterization. This 
can be seen as follows. By passing to a dynamical subcoupling $\Xi'$, according to Corollary \ref{cor_interpolation}, one can first assume 
that $\supp\mu$ is compact. The proof continues to use the notation $\Pi$ for the dynamical optimal coupling.
By Corollary \ref{intermcoup} one can assume that the $\mathcal{A}$-minimizers between points in $\supp \mu$ and $\supp\nu$ are unique 
up to parameterization by considering the transport $\pi_{0,\sigma}$ between $0$ and $\sigma\colon \Gamma\to [0,1]$ with $0<\sigma(\gamma)$ sufficiently small. 
One can choose $\sigma$ such that $\supp\nu$ is compact.

Note that
$$0=\nu(\supp \mu)=\pi(\supp\mu\times \supp\nu \cap \triangle)$$ 
where $\triangle$ denotes the diagonal in $M\times M$. Thus $\Pi$-almost every $\mathcal{A}$-minimizer is nonconstant. The assumption that $\mu$ is concentrated 
on a locally uniformly spacelike hypersurface implies that every nonconstant causal curve can intersect $\supp\mu$ at most once. Therefore $\supp\mu$ is 
$(\ev_t)_\sharp \Pi$-neglectable for all $t>0$.

Note that since $\mu$ and $\nu$ are supported on Lipschitz graphs over $N$ one can consider both measures to be 
supported on $N$ without losing the absolute continuity of $\mu$ with respect to the Lebesgue measure. Therefore 
one can apply Lemma \ref{lemleb} to $\mu$ and $\nu$ seen as measures on $N$ and obtain a set $R\subseteq A\times B$ 
by revoking the identification via the graphs. Choose a set $R=R(\Sigma)\subseteq A\times B$ according to Lemma \ref{lemleb}. 

Assume that there exist $(x,y_1)$ and $(x,y_2)\in R$ such that $y_i$ does not lie on the $\mathcal{A}$-minimizer between $x$ and $y_j$ for $i\neq j$. 
Then one knows that the tangents $\dot{\gamma}_i(0)$ to the $\mathcal{A}$-minimizers $\gamma_i\in \Gamma_{x\to y_i}$ are not parallel. 
Choose a diffeomorphism $\psi$ from the unit ball $B_1(0)$ in $\R^m$ to a neighborhood $U$ of $x$ with $\psi(0)=x$. For $\delta >0$ 
define $\psi_\delta\colon B_1(0)\rightarrow U$, $v\mapsto \psi(\delta v)$. 

It is obvious that $L_\delta := \frac{1}{\delta}\psi_\delta^*(L)$ converges for $\delta \rightarrow 0$ to $L|_{\mathcal{C}_x}$ uniformly 
on compact subsets of $\inte \mathcal{C}$ in any $C^k$-topology. Especially the minimizers of the action induced by $L_\delta$ converge uniformly to 
straight lines in $B_1(0)$. 

Next choose sequences $\delta_n,r_n\downarrow 0$ such that 
\begin{align}\label{lemleb2}
\lim_{n\to\infty} \frac{\mathcal{L}_A(\pi_1(\Sigma \cap (A\times \overline{B_{r_n}(y_2)}))\cap \text{im}(\psi_{\delta_n}))}{\mathcal{L}_A(A\cap \text{im}(\psi_{\delta_n}))}=1
\end{align}
where $\mathcal{L}_A$ denotes the Lebesgue measure on $A$.
Since the distance from $x$ to $y_2$ can be bounded from below and due to the structure of the $\psi_\delta$'s one concludes that 
the tangents $\dot\eta$ at $x$ converge to $\dot\gamma_2(0)$ at $x$ for $\mathcal{A}$-minimizers $\eta\in\Gamma$ 
connecting a point in $\text{im}\psi_{\delta_n}$ with a point in $B_{r_n}(y_2)$. 
Further by \eqref{lemleb2} one can choose points $(w_n,z_n)\in \text{im}\psi_{\delta_n}\times B_{r_n}(y_2)$ with $(w_n,z_n)\in \supp\pi$, 
$\dist(\psi_{\delta_n}^{-1}(x),\psi_{\delta_n}^{-1}(w_n))\ge \frac{1}{2}$ and $\psi_{\delta_n}^{-1}(w_n)\to a\dot{\gamma}_1(0)
+b\dot{\gamma}_2(0)$ with $a,b\in \R$ and $b<0$. Thus the $L|_{\mathcal{C}_x}$-minimizer $t\mapsto t\cdot \dot{\gamma}_1(0)$ and 
$t\mapsto v+ t\cdot \dot{\gamma}_2(0)$ intersect for some positive value of $t$. A simplified version of 
Proposition \ref{mathershortening} now shows that this crossing can be shortened by a nonzero amount. Since the convergence is 
uniform a fraction of this shortening survives when passing to $L_{\delta_n}$ for $n$ sufficiently large. This now contradicts the cyclic 
monotonicity of the optimal coupling.
\end{proof}

Consider the set ${I}'_B$ of $\mathcal{A}$-minimizers $\gamma'\in \Gamma$ which intersect $B$ in more than one point. Note that 
$\dot{\gamma}'\in \partial\mathcal{C}$ for all $\gamma'\in {I}'_B$ since $B$ is achronal. Identify $M$ with $\R\times N$ via the splitting 
$\tau$ as in Section \ref{intermregu}. Define the set $I_B$ to be the set of reparameterizations $\gamma$ of $\gamma'\in {I}'_B$  with 
$\tau\circ \gamma=\id$. Then the curves in ${I}_B$ correspond one-to-one with $\Phi_\tau$-orbits in $N$. Denote the set of these $\Phi_\tau$-orbits
by $I_B$ as well.

\begin{prop}\label{mongeprop2}
If $A$ is a locally uniformly spacelike hypersurface and $B$ is achronal then the set formed by 
the intersections of orbits in $I_B$ with $A$ is $\mathcal{L}_A$-neglectable. 
\end{prop}

Assume for the moment that (i) $A$ is uniformly spacelike, (ii) $B$ is precompact and (iii) the distance between the first and 
the last intersections of $\mathcal{A}$-minimizers with $B$ is uniformly bounded from below. 
Let $(y_1,y_2)\in J^+ \cap (B\times B)$ and $\gamma\in I_B$ be an $\mathcal{A}$-minimizer between $y_1$ and $y_2$. Choose $\delta>0$ such that 
$\tau(B_\delta(y_1))$ and $\tau(B_\delta(y_2))$ are disjoint. Choose $b\in \R$ between $\tau(B_\delta(y_1))$ and $\tau(B_\delta(y_2))$. 
Denote by $S_B$ the set of intersections of curves $\gamma\in I_B$ with $\{b\}\times N$ and let $\mathcal{L}_{\{b\}\times N}$ denote
the Lebesgue measure on $\{b\}\times N$.

\begin{lemma}\label{mongeNeg}
$S_B$ is a $\mathcal{L}_{\{b\}\times N}$-neglectable set.
\end{lemma}

\begin{proof}
Consider $\eta\in \Gamma$ with endpoints in $B_{\delta}(y_1)\cap B$ and $B_{\delta}(y_2)\cap B$. Denote the intersection 
of $\eta$ with $\{b\}\times N$ by $z$. Choose a convex neighborhood $U$ around $z$ disjoint from $B_{\delta}(y_1)\cap B$ and 
$B_{\delta}(y_2)\cap B$. Denote by $\eta_\alpha$ and $\eta_\omega$ the initial and the terminal point on $\eta$ in $U$, respectively. Then one has
$$S_B\cap U \subseteq J^-(\eta_\omega)^c\cap J^+(\eta_\alpha)^c=(J^-(\eta_\omega)\cup J^+(\eta_\alpha))^c.$$
With the same argument as in the proof of Proposition \ref{mongeprop1} one can assume, after possibly restricting $U$, that 
$(\{b\}\times N)\cap J^+(\eta_\alpha)$ and $(\{b\}\times N)\cap J^-(\eta_\omega)$ are strictly convex sets.
Thus there exists $r>0$ such that for every point $z_0\in S_B$  there exist two two points $z_1,z_2\in \{b\}\times N$ with 
$$B_r(z_1)\subset(\{b\}\times N)\cap J^+(\eta_\alpha),\; B_r(z_2)\subset (\{b\}\times N)\cap J^-(\eta_\omega)$$
and 
$$B_r(z_1)\cap B_r(z_2)=\{z_0\}.$$
Therefore for every $\e>0$ $S_B$ can be covered by at most $\e^n$ disjoint sets with volume less than $\e^{n+1}$. 
This shows that $S_B$ is $\mathcal{L}_{\{b\}\times N}$-neglectable. 
\end{proof}

\begin{lemma}\label{mongeLip}
The map $S_B\to TN\cong \{b\}\times TN$ mapping $z\in S_B$ to the tangent vector in $\mathcal{D}_z$ of an $\mathcal{A}_\tau$-minimizer in $I_B$ intersecting $z$ is well defined and Lipschitz.
\end{lemma}

\begin{proof}
Let $\gamma_1$ be a $\mathcal{A}$-minimizer between $y_1,y_2\in B$ and $\gamma_2$ be a $\mathcal{A}$-minimizer between $y_3, y_4\in B$ 
that meet at an intermediate point $z$ with different tangent vectors. Then $y_2\in I^+(y_3)$ 
and $y_4\in I^+(y_1)$. Both induces a contradiction to the achronality of $B$. Thus the map is well defined.

Now let $x,z\in S_B$ and $\gamma_x,\gamma_z\in I_B$ containing $x$ and $z$ in their traces, respectively. Choose $y_1,y_2\in B$ 
such that $\gamma_z$ connects $y_1$ and $y_2$. Then one has $x\in I^+(y_1)^c\cap I^-(y_2)^c$ by the achronality of $B$. Therefore 
$$\dist(x,I^+(y_1)),\dist(x,I^-(y_2))\le C_1\dist(x,z)^2$$
for some $C_1<\infty$ depending  only $\supp\mu \cup \supp\nu$. 

Choose $w\in \partial J^+(y_1)\cap (\{b\}\times N)$ the nearest point to $x$. Then there exists $C_2<\infty$ and $(b,\dot\chi(b))\in \mathcal{D}_w$ with 
\begin{align}\label{mongelipeq}
\dist((b,\dot\chi(b)),\dot\gamma_x(b))\le C_2\dist(w,z).
\end{align}
Recall that one has assumed that the distance between the intersections of $\mathcal{A}$-minimizers with $B$ is bounded from below. Therefore
there exists $\e>0$ such that 
$$\max \tau|_{B_\delta(y_1)}<b-\e<b+\e<\min\tau|_{B_\delta(y_2)}.$$
Then by Lemma \ref{actionestimate} there exists $\delta_1>0$ only depending on $\supp\mu \cup \supp\nu$ such that 
$$c_L(\gamma_z(b-\e),(b+\e,\chi(b+\e)))^2\ge \delta_1\dist(\dot\chi(b), Y_w)^2$$
where $Y_w$ denotes the tangent to the unique $\mathcal{A}_\tau$-minimizer $\eta_w\colon [b-\e,b]\to N$ whose graph connects $\gamma_z(b-\e)$ and $w$.
Further since $c_L(\gamma_z(b-\e),.)^2$ is Lipschitz up to the boundary of its domain there exists $\delta_2>0$ with 
$$\dist((b+\e,\chi(b+\e)),\partial J^+(\gamma_z(b-\e)))\ge \delta_2 c_L^2(\gamma_z(b-\e),(b+\e,\chi(b+\e))).$$
By the triangle inequality and \eqref{mongelipeq} one has 
\begin{align*}
\dist(\dot\chi(b),Y_w)&\ge \dist(Y_w,\dot\gamma_x(b))-C_2\dist(w,x)\\
&\ge \dist(\dot\gamma_z(b),\dot\gamma_x(b))-C_3\dist(w,z)-C_2\dist(w,x)
\end{align*}
where the last inequality follows from the Lipschitz continuity of the vector field $u\mapsto Y_u$. Since $\dist(w,x)\le \dist(z,x)$
one concludes
$$\dist(\dot\chi(b),Y_w)\ge  \dist(\dot\gamma_z(b),\dot\gamma_x(b))-(2C_3+C_2)\dist(z,x)\ge \frac{1}{2}\dist(\dot\gamma_z(b),\dot\gamma_x(b)).$$
if $\dist(\dot\gamma_z(b),\dot\gamma_x(b))\ge 2(2C_3+C_2) \dist(z,x)$. Now the triangle inequality and the last estimate imply that 
\begin{align*}
\dist&(\gamma_x(b+\e),\partial J^+(\gamma_z(b-\e)))\\
&\ge \dist((b+\e,\chi(b+\e)),\partial J^+(\gamma_z(b-\e)))-\dist(\gamma_x(b+\e),(b+\e,\chi(b+\e)))\\
&\ge \delta_3 \dist(\dot\gamma_z(b),\dot\gamma_x(b))^2-\dist(\gamma_x(b+\e),(b+\e,\chi(b+\e)))
\end{align*}
for some $\delta_3>0$. Next by the continuity of $\Phi_\tau$ one has 
\begin{align*}
\dist(\gamma_x(b+\e),\partial J^+(\gamma_z(b-\e)))&\ge \delta_3 \dist(\dot\gamma_z(b),\dot\gamma_x(b))^2-C_4\dist(w,x)\\
&\ge \delta_4 \dist(\dot\gamma_z(b),\dot\gamma_x(b))^2-C_{4}C_1\dist(z,x)^2
\end{align*}
for some $C_4<\infty$. Now if 
$$\dist(\dot\gamma_z(b),\dot\gamma_x(b))\ge \max\left\{2(2C_3+C_2),\sqrt{\frac{C_4C_1}{\delta_4}}\right\} \dist(z,x)$$
one concludes $\gamma_x(b+\e)\in I^+(\gamma_z(b-\e))$. This in turn implies that the endpoint of $\gamma_x$ in $B$ is 
contained in $I^+(y_2)$, clearly a contradiction to the achronality of $B$.
\end{proof}

\begin{proof}[Proof of Proposition \ref{mongeprop2}]
Since a countable union of neglectable sets is neglectable one makes a few simplifying assumptions. One assumes that (i) $A$ is uniformly spacelike, i.e.
the distance of $TA\cap T^1M$ from $\mathcal{C}^1$ is bounded away from $0$, (ii) $B$ is 
precompact and (iii) the distance between the first and the last intersections of $\mathcal{A}$-minimizers with $B$ is uniformly bounded from below. 

By Lemma \ref{mongeNeg} the set $S_B$ is $\mathcal{L}_{\{b\}\times N}$-neglectable. 
Further by Lemma \ref{mongeLip} the map that assigns to each intersection point the tangent of the corresponding $\mathcal{A}_\tau$-minimizer is Lipschitz. Choose a 
Lipschitz extension of this map to $N$ according to Kirzbraun's Theorem, cf. \cite[Theorem 1.31]{schwartz}. Then the unique intersection 
of $\mathcal{A}_\tau$-minimizers in $I_B$ with $A$ is the image of a $\mathcal{L}_{\{b\}\times N}$-neglectable set under a Lipschitz map. Therefore it is 
$\mathcal{L}_A$-neglectable.
\end{proof}

\begin{proof}[Proof of Theorem \ref{Thmmonge}]
First one shows that any optimal coupling is concentrated on the graph of a map. Any such map is measurable since 
couplings are Borel measures. Choose a dynamical optimal coupling $\Pi$. Denote by $\Pi_\triangle$ the restriction of $\Pi$ to
the set of constant $\mathcal{A}$-minimizers and $\Pi_C:=\Pi-\Pi_\triangle$. Further set $\mu_\triangle:=(\ev_0)_\sharp \Pi_\triangle$ and $\mu_C:=(\ev_0)_\sharp \Pi_C$.
Note that by construction one has $\mu =\mu_\triangle +\mu_C$. 

First one shows that $\supp\mu_\triangle\cap \supp\mu_C$ is a $\mathcal{L}_A$-neglectable set. To this end note that $\supp\mu_\triangle \subseteq \supp \nu$
since $\mu_\triangle$ is induced by constant curves. Now if $x\in\supp\mu_\triangle\cap \supp\mu_C$, $x$ is contained in $\supp\nu$ and there exists 
$y\in\supp\nu \cap J^+(x)\setminus\{x\}$. So $x$ lies on an $\mathcal{A}$-minimizer that intersects the support of 
$\nu$ at least twice. The set consisting of such points was shown in Proposition \ref{mongeprop2} to be $\mathcal{L}_A$-neglectable which implies the initial claim.

Assume for the moment that $\pi_\triangle :=(\ev_0,\ev_1)_\sharp \Pi_\triangle$ and $\pi_C :=(\ev_0,\ev_1)_\sharp \Pi_C$ are separately concentrated on a graph.
Then $\pi$ is concentrated on the union of these graphs since $\pi =\pi_\triangle+\pi_C$. The overlap of these graphs lies in 
$\pi_1^{-1}(\supp\mu_\triangle\cap \supp\mu_C)$. Since $\supp\mu_\triangle\cap \supp\mu_C$ is $\mathcal{L}_A$-neglectable it is also $\mu$-neglectable and 
therefore $\pi_1^{-1}(\supp\mu_\triangle\cap \supp\mu_C)$ is $\pi$-neglectable. Thus $\pi$ is concentrated on a graph.

Therefore one has to show that $\pi_\triangle$ and $\pi_C$ are concentrated separately on a graph. This claim is trivial for $\pi_\triangle$ since $\pi_\triangle$ is concentrated on the 
diagonal of $M\times M$. 

For $\pi_C$ note that by construction $\pi_C (\triangle)=0$. Since $\pi_C(\triangle)
\ge \nu(\supp\mu)$ one can apply Proposition \ref{mongeprop1} to the situation of $\mu_C$ and $\nu_C:=(\ev_1)_\sharp \Pi_C$ with the coupling $\pi_C$.
Assume first that there exists a set $S\subset M$ with $\mu_C(S)>0$ such that for every $x\in S$ there exist
$y_1, y_2\in\supp \nu_C$ with $y_1\neq y_2$, $(x,y_i)\in \supp\pi_C$ and no $\mathcal{A}$-minimizer from $x$ to $y_i$ meets $y_j$ for $i\neq j$. 
By the martingale property of $\pi_C$ one has $\pi_C(\pi_1^{-1}(S))=\mu_C(S)>0$. Now for the set $R$ constructed in Proposition \ref{mongeprop1} 
one has $R\cap \pi_1^{-1}(S)\neq \emptyset$. But this contradicts the property of $R$ given in Proposition \ref{mongeprop1}. Therefore the set of 
points transported into two different directions is $\mu_C$-neglectable. 

It remains to show that the set transported along one $\mathcal{A}$-minimizer, but to at least two points in $B$ is $\mathcal{L}_A$-neglectable. 
But this is the content of Proposition \ref{mongeprop2} since $\mu_C$ is absolutely continuous with respect to $\mathcal{L}_A$. This follows 
directly from the assumption that $\mu$ is absolutely continuous with respect to $\mathcal{L}_A$.

Uniqueness of the optimal coupling follows from the observation that if two optimal couplings exist, any convex combination of both is 
optimal as well. But any nontrivial convex combination of two couplings, concentrated on separate graphs, is not concentrated on a graph unless they coincide.
\end{proof}

The proof of Theorem \ref{Thmmonge2} differs only in minor details from that of Theorem \ref{Thmmonge}. These modifications are indicated in the following.

\begin{prop}\label{mongeprop3}
Let $\mu,\nu\in \mathcal{P}(M)$ be as in the assumptions of Theorem \ref{Thmmonge2} and assume that $\supp\mu$ is $\nu$-neglectable. Then for 
every optimal coupling $\pi$ of $\mu$ and $\nu$ there exists a set $R$ of full $\pi$-measure such that for all $(x,y_1),(x,y_2)\in R$ there 
exists a $\mathcal{A}_\tau$-minimizer $\gamma$ containing $x,y_1,y_2$ in its trace.
\end{prop}

\begin{proof}
Choose a dynamical optimal coupling $\Pi$ between $\mu$ and $\nu$.
Like in Proposition \ref{mongeprop1} one can assume that $\mathcal{A}$-minimizers between $\supp\mu$ and $\supp\nu$ are nonconstant and unique up to parameterization.
To see this first consider dynamical subcouplings $\Xi'$ instead of $\Pi$, according to Corollary \ref{cor_interpolation}, for the restriction of $\mu$ to 
$\overline{B_{r}(p)}\cap B_\e(\supp\nu)^c$ for $p\in\supp\mu$, $\e>0$ and $r<\inj(L)/2$, where $\inj(L)$ denotes the injectivity radius of $L$ on a sufficiently large 
compact subset of $M$. The proof continues to use the notation $\Pi$ for the dynamical optimal coupling. 

Since the distance between $\supp\mu$ and $\supp\nu$ is positive $\Pi$-almost all $\mathcal{A}$-minimizers are 
nonconstant. Consequently one can choose a measurable function $\sigma\colon \Gamma\to (0,1]$ such that $\gamma(\sigma(\gamma))\in (\supp\mu)^c \cap 
B_{\inj(L)}(\gamma(0))$ for $\Pi$-almost all $\gamma\in \Gamma$. The resulting restriction is optimal according to Corollary \ref{intermcoup}. By construction
one knows that $\supp\mu$ is $(\ev\circ(\id\times \sigma))_\sharp \Pi$-neglectable. 

Now one applies Lemma \ref{lemleb} to $\mu$ and $\nu$ to obtain the set $R\subset M\times M$. The remainder of the argument is absolutely analogous. 
\end{proof}

Recall that $I_B$ denotes the set of $\mathcal{A}$-minimizers $\gamma\colon I\to M$ with $\tau\circ \gamma=\id$ which intersect $B$ in more than one points.

\begin{prop}\label{mongeprop4}
The set formed by the traces of orbits in $I_B$ is $\mathcal{L}_M$-negletable.
\end{prop}

\begin{proof}
As before one can assume that $B$ is precompact and the distance between two intersections of a $\mathcal{A}$-minimizer with $B$ is uniformly bounded from below. 
Then as above Lemma \ref{mongeNeg} and \ref{mongeLip} apply to the present case as well with the same notation. Choose a Lipschitz extension of the 
Lipschitz map obtained in Lemma \ref{mongeLip} to $M$. Then the union of the traces of orbits in $I_B$ is the image under the locally Lipschitz map of 
evaluation of a $\mathcal{L}_1\times\mathcal{L}_{\{b\}\times N}$-neglectable set. Therefore it is $\mathcal{L}_M$-neglectable.
\end{proof}

The proof of Theorem \ref{Thmmonge2} follows word-by-word the proof of Theorem \ref{Thmmonge} except for obvious changes.


\begin{thebibliography}{3}

\bibitem{alibor}{\sc C. D. Aliprantis and K. C. Border} {\it Infinite dimensional analysis. A hitchhiker's guide}, Third edition. Springer, Berlin, 2006. MR2378491 (2008m:46001)

\bibitem{ap}{\sc L. Ambrosio and A. Pratelli} {\it Existence and stability results in the {$L^1$} theory of optimal transportation}, Optimal transportation and applications 
({M}artina {F}ranca, 2001), Lecture Notes in Math. {\bf 1813}, Springer, Berlin (2003), 123--160. MR2006307

\bibitem{bebu1}{\sc P. Bernard and B. Buffoni}, {\it Weak {KAM} pairs and Monge-{K}antorovich duality}, Advanced Studies in Pure Mathematics 47-2, (2007) Asymptotic Analysis and Singularities, 397--420. MR2387248 (2009b:37116)

\bibitem{bebu2}{\sc P. Bernard and B. Buffoni}, {\it Optimal mass transportation and {M}ather theory}, J. Eur. Math. Soc. (JEMS), {\bf 9} (2007), 85--121. MR2283105 (2007j:49050)

\bibitem{suhr1} {\sc P. Bernard and S. Suhr}, {\it Lyapounov Functions of closed Cone Fields: from Conley Theory to Time Functions}, to appear in Commun. Math. Phys.
{\bf 359}, 467--498.

\bibitem{berpue} {\sc J. Bertrand and M. Puel}, {\it The optimal transport problem for relativistic costs}, Calc. Var. Partial Differential Equation {\bf 46} (2013), 
353--374. MR3016512

\bibitem{brenier1} {\sc Y. Brenier}, {\it Extended {M}onge-{K}antorovich {T}heory}. Optimal Transportion and Applications (Martina Franca, 2001). Lecture Notes in Math., {\bf 1813}, 91--121, Springer, Berlin (2003). MR2006306

\bibitem{br2}{\sc Y. Brenier, U. Frisch, M. Henon, G. Loeper, S. Matarrese, R. Mohayaee, A. Sobolevskii}, 
{\it Reconstruction of the early universe as a convex optimization problem}, Mon. Not. Roy. Astron. Soc. {\bf 346} (2003), 501--524.

\bibitem{chpa} {\sc T. Champion and L. de Pascale}, {\it The {M}onge problem for strictly convex norms in {$\mathbb{R}^d$}}, J. Eur. Math. Soc. (JEMS) {\bf 12}, (2010), 1355--1369. MR2734345 (2012d:49087)

\bibitem{eckmil} {\sc M. Eckstein and T. Miller}, {\it Causality for nonlocal phenomena},  Ann. Henri Poincar\'e {\bf 18} (2017), 3049--3096. MR3685983

\bibitem{eckmil2} {\sc M. Eckstein and T. Miller}, {\it Causal evolution of wave packets}, Phys. Rev. {\bf A 95}, 032106.


\bibitem{fafi}{\sc A. Fathi and A. Figalli}, {\it Optimal transportation on non-compact manifolds}, Israel J. Math. {\bf 175} (2010), 1--59. MR2607536 (2011c:49091)

\bibitem{fr}{\sc U. Frisch, S. Matarrese, R. Mohayaee, A. Sobolevski}, {\it A reconstruction of the initial conditions of the Universe by optimal mass transportation}, Nature {\bf 417} (2002), 260--262.

\bibitem{jasa}{\sc M. A. Javaloyes and M. S{\'a}nchez}, {\it Finsler metrics and relativistic spacetimes}, Int. J. Geom. Methods Mod. Phys.  {\bf 11} (2014), 1460032. MR3270295

\bibitem{miller17}{\sc T. Miller}{\it Polish spaces of causal curves}, J. Geom. Phys. {\bf 116} (2017), 295--315. MR3623662

\bibitem{minguzzi141}{\sc E. Minguzzi}, {\it Light cones in {F}insler spacetime}, Commun. Math. Phys. {\bf 334} (2015), 1529--1551. MR3312442

\bibitem{minguzzi142}{\sc E. Minguzzi}, {\it Convex Neighborhoods for Lipschitz Connections and Sprays}, Monatsh. Math. {\bf 177} (2015), 569--625. MR3371365

\bibitem{minguzzi143}{\sc E. Minguzzi}, {\it An Equivalence of Finslerian Relativistic Theories}, Reports on Mathematical Physics {\bf 77} (2016), 45--55. MR3461797

\bibitem{mil}{\sc T. Miller}, {\it Polish spaces of causal curves},  J. Geom. Phys. {\bf 116} (2017), 295--315. MR3623662

\bibitem{oneill}{\sc B. o'Neill}, {\it Semi-{R}iemannian geometry},  Pure and Applied Mathematics, Academic Press, Inc., New York, 1983. MR0719023 (85f:53002)

\bibitem{schwartz}{\sc J. T. Schwartz}, {\it Nonlinear functional analysis}, Gordon and Breach Science Publishers, New York-London-Paris, 1969. MR0433481 (55 \#6457)

\bibitem{villani}{\sc C. Villani},  {\it Optimal Transport, Old and New}, Grundlehren der Mathematischen Wissenschaften,
Springer, Berlin, 2009. MR2459454 (2010f:49001)

\end{thebibliography}
\end{document}